\newcommand {\Omit}[1]{}
\newcommand{\g}{\mathfrak{g}}
\newcommand{\h}{\mathfrak{h}}
\newcommand{\p}{\mathfrak{p}}
\newcommand{\greg}{\mathfrak{g}_{\mathrm{reg}}}
\newcommand{\Ad}{\mathrm{Ad}}
\renewcommand{\exp}{\mathrm{exp}}
\newcommand{\MT}{\mathbf{MT}}
\newcommand{\WS}{\mathbf{WS}_1}
\newcommand{\WSQ}{\mathbf{WS}"_{\mkern-11mu 1}}
\newcommand{\Cob}{\mathbf{Cob}}
\newcommand{\htimes}{\times^{\mathrm{h}}}
\newextarrow{\xbigtoto}{{20}{20}{20}{20}}
{\bigRelbar\bigRelbar{\bigtwoarrowsleft\rightarrow\rightarrow}}
\numberwithin{equation}{section}
\newtheorem{theorem}{Theorem}[section]
\newtheorem{proposition}[theorem]{Proposition}
\newtheorem{corollary}[theorem]{Corollary}
\newtheorem{lemma}[theorem]{Lemma}
\newtheorem{conjecture}[theorem]{Conjecture}
\newtheorem{mtheorem}{Main Theorem}
\theoremstyle{definition}
\newtheorem{definition}[theorem]{Definition}
\newtheorem{remark}[theorem]{Remark}
\theoremstyle{plain}
\newtheorem*{theorem*}{Theorem}
\newtheorem*{proposition*}{Proposition}
\newcommand {\IC}{\mathbb{C}}
\newcommand {\reg}{_{\mathrm{reg}}}
\newcommand {\G}{\mathcal G}
\newcommand {\Hom}{\operatorname{Hom}}
\newcommand{\mmm}{\mathsf{m}}
\newcommand{\uuu}{\mathsf{1}}
\newcommand{\fp}[2]{\leftindex_{#1}\times_{#2}\,}
\newcommand{\tto}{\;\substack{\longrightarrow\\[-9pt] \longrightarrow}\;}
\renewcommand{\H}{\mathcal{H}}
\newcommand{\too}{\longrightarrow}
\newcommand{\s}{\subseteq}
\newcommand{\mtoo}{\longmapsto}
\newcommand{\sss}{\mathsf{s}}
\newcommand{\ttt}{\mathsf{t}}
\newcommand{\sll}[1]{\mkern-4mu\mathbin{/\mkern-5mu/}_{\mkern-4mu{#1}}}
\newcommand{\pr}{\mathrm{pr}}
\newcommand{\A}{\mathcal{A}}
\begin{document}
	
	\title[]{Grothendieck--Springer resolutions and TQFTs}
	
	\author[Peter Crooks]{Peter Crooks}
	\author[Maxence Mayrand]{Maxence Mayrand}
	\address[Peter Crooks]{Department of Mathematics and Statistics\\ Utah State University \\ 3900 Old Main Hill \\ Logan, UT 84322, USA}
	\email{peter.crooks@usu.edu}
	\address[Maxence Mayrand]{D\'{e}partement de math\'{e}matiques \\ Universit\'{e} de Sherbrooke \\ 2500 Bd de l’Universit\'{e} \\ Sherbrooke, QC, J1K 2R1, Canada}
	\email{maxence.mayrand@usherbrooke.ca}
	\subjclass{14L30 (primary); 53D20 (secondary)}
	\keywords{Grothendieck--Springer resolution, Moore-Tachikawa conjecture, topological quantum field theory}
	
	\begin{abstract}
		The Moore--Tachikawa conjecture is that each connected complex semisimple group $G$ determines a two-dimensional TQFT in a category of Hamiltonian symplectic varieties. We view the Moore--Tachikawa conjecture as a first step in systematically assigning new TQFTs to purely Lie-theoretic data. At the same time, one should expect these new TQFTs to bear a close relation to those conjectured by Moore and Tachikawa. Our manuscript aims to integrate these two points of view. 
		
		In more detail, let $\g$ be the Lie algebra of $G$. Consider a conjugacy class $\mathcal{C}$ of parabolic subalgebras of $\mathfrak{g}$. This class determines partial Grothendieck--Springer resolutions $\mu_{\mathcal{C}}:\mathfrak{g}_{\mathcal{C}}\longrightarrow\g^*=\mathfrak{g}$ and $\nu_{\mathcal{C}}:G_{\mathcal{C}}\longrightarrow G$. We construct a canonical symplectic groupoid $(T^*G)_{\mathcal{C}}\tto\mathfrak{g}_{\mathcal{C}}$ and quasi-symplectic groupoid $\mathrm{D}(G)_{\mathcal{C}}\tto G_{\mathcal{C}}$. By considering a Kostant slice $\mathrm{Kos}\s\g$ and Steinberg slice $\mathrm{Ste}\s G$, we prove that the pairs  $(((T^*G)_{\mathcal{C}})_{\text{reg}}\tto(\mathfrak{g}_{\mathcal{C}})_{\text{reg}},\mu_{\mathcal{C}}^{-1}(\mathrm{Kos}))$ and $((\mathrm{D}(G)_{\mathcal{C}})_{\text{reg}}\tto(G_{\mathcal{C}})_{\text{reg}},\nu_{\mathcal{C}}^{-1}(\mathrm{Ste}))$ determine new and explicit TQFTs in a $1$-shifted Weinstein symplectic category. We then show that certain symplectic varieties arising from our new TQFTs have canonical Lagrangian relations to the open Moore--Tachikawa varieties.
	\end{abstract}
	
	\maketitle
	\setcounter{tocdepth}{2}
	\tableofcontents
	
	\vspace{-10pt}
	
	
	\section{Introduction}
	\subsection{Motivation and context}\label{Subsection: Motivation and context}
	In \cite{moo-tac:11}, Moore and Tachikawa posit the existence of certain two-dimensional topological quantum field theories (TQFTs) valued in a category $\MT$ of holomorphic symplectic varieties. The \textit{Moore--Tachikawa conjecture} has featured in several papers over the last decade \cite{crooks-mayrand,bra-fin-nak:19,balibanu-mayrand,bie:23,ara:18,cal:14,cal:15,cro-may2:24,cro-may:24,dan-kir-mar:24,tac:18,gan-web}, and continues to play a prominent role in geometric representation theory and theoretical physics. In \cite{cro-may2:24}, we prove that any quasi-symplectic groupoid $\G\tto X$ and suitable \textit{global slice} $S\s X$ determine a two-dimensional TQFT $\eta_{\G,S}:\Cob_2\longrightarrow\WS$ behaving analogously to those conjectured by Moore and Tachikawa, where $\WS$ is a completion of a $1$-shifted version of the Weinstein symplectic ``category" \cite{weinstein-symplectic-category}. More precise statements will be forthcoming.
	
	In this manuscript, we view the Moore--Tachikawa conjecture as a mechanism for constructing TQFTs from purely Lie-theoretic data. We are thereby led to consider a class of the TQFTs $\eta_{\G,S}$ described above, and understand interrelationships among these TQFTs. Our class turns out to arise from partial Grothendieck--Springer resolutions in Poisson and quasi-Poisson geometry. One begins by fixing a connected complex semisimple affine algebraic group $G$ with Lie algebra $\g$. The operation $P\mapsto\p\coloneqq \mathrm{Lie}(P)$ induces a bijective correspondence between conjugacy classes of parabolic subgroups $P\s G$ and those of parabolic subalgebras $\p\s\g$. Conjugacy classes $\mathcal{C}$ of the latter type are thereby identified with those of the former type. With this in mind, one has the incidence subvarieties
	$$\mathfrak{g}_{\mathcal{C}}\coloneqq\{(\mathfrak{p},x)\in\mathcal{C}\times\mathfrak{g}:x\in\mathfrak{p}\}\quad\text{and}\quad G_{\mathcal{C}}\coloneqq\{(P,g)\in\mathcal{C}\times G:g\in P\}.$$ One also has the morphisms
	$$\mu_{\mathcal{C}}:\mathfrak{g}_{\mathcal{C}}\longrightarrow\mathfrak{g}^*=\mathfrak{g},\quad (\mathfrak{p},x)\mapsto x\quad\text{and}\quad \nu_{\mathcal{C}}:G_{\mathcal{C}}\longrightarrow G,\quad (P,g)\mapsto g,$$ where the Killing form is used to identify $\g^*$ with $\g$. There is a unique Poisson Hamiltonian $G$-variety structure on $\g_{\mathcal{C}}$ for which $\mu_{\mathcal{C}}$ is a moment map. The partial Grothendieck--Springer resolution (resp. multiplicative partial Grothendieck--Springer resolution) associated to $(G,\mathcal{C})$ is defined to be $(\mathfrak{g}_{\mathcal{C}},\mu_{\mathcal{C}})$ (resp. $(G_{\mathcal{C}},\nu_{\mathcal{C}})$). If $\mathcal{C}=\mathcal{B}$ is the conjugacy class of Borel subalgebras of $\mathfrak{g}$, then $\mathfrak{g}_{\mathcal{C}}=\widetilde{\g}$ is the well-studied Grothendieck--Springer resolution of $\mathfrak{g}$ \cite{eva-lu:07}.
	
	\subsection{Main results}\label{Subsection: Main results}
	Let $G$ be a connected complex semisimple affine algebraic group with Lie algebra $\mathfrak{g}$. One knows that the cotangent groupoid $T^*G\tto\mathfrak{g}^*=\g$ induces the canonical Poisson structure on $\g$. We generalize this fact as follows. 
	\begin{mtheorem}\label{1n9y2fsw}
		Let $\mathcal{C}$ be a conjugacy class of parabolic subalgebras of $\mathfrak{g}$. There is a canonical algebraic symplectic groupoid $(T^*G)_{\mathcal{C}}\tto\g_{\mathcal{C}}$ that induces the Poisson structure on $\g_{\mathcal{C}}$.
	\end{mtheorem}
	
	The term ``canonical" warrants some clarification in this context. We construct $(T^*G)_{\mathcal{C}}\tto\g_{\mathcal{C}}$ for a choice of $\p\in\mathcal{C}$, and then explain the sense in which this construction is choice-independent.
	
	Now write $(\g_{\mathcal{C}})_{\text{reg}}\s\mathfrak{g}_{\mathcal{C}}$ for the open dense subvariety of all points in $\mathfrak{g}_{\mathcal{C}}$ at which the Poisson structure has maximal rank; it coincides with $\g_{\mathcal{C}}$ if and only if $\mathcal{C}=\mathcal{B}$. The pullback of $(T^*G)_{\mathcal{C}}\tto\mathfrak{g}_{\mathcal{C}}$ to 
	$(\g_{\mathcal{C}})_{\text{reg}}$ is an algebraic symplectic groupoid $((T^*G)_{\mathcal{C}})_{\text{reg}}\tto(\g_{\mathcal{C}})_{\text{reg}}$ integrating $(\g_{\mathcal{C}})_{\text{reg}}$. On the other hand, let $\mathrm{Kos}\coloneqq e+\mathfrak{g}_f\s\mathfrak{g}$ be the Kostant slice associated to a principal $\mathfrak{sl}_2$-triple $(e,h,f)\in\mathfrak{g}^{\times 3}$. We show $\mathrm{Kos}_{\mathcal{C}}\coloneqq\mu_{\mathcal{C}}^{-1}(\mathrm{Kos})\s(\g_{\mathcal{C}})_{\text{reg}}$ to be a global slice for $((T^*G)_{\mathcal{C}})_{\text{reg}}\tto(\g_{\mathcal{C}})_{\text{reg}}$, in the sense of \cite[Definition 4.9]{cro-may2:24}. 
	
	This is a point at which \textit{symplectic reduction along a submanifold} \cite{crooks-mayrand} becomes relevant. In more detail, suppose that $m,n\in\mathbb{Z}_{\geq 0}$ satisfy $(m,n)\neq (0,0)$. The symplectic groupoid $(T^*G)_{\mathcal{C}}^{m+n}\times\overline{(T^*G)_{\mathcal{C}}^{m+n}}$ acts on $(T^*G)_{\mathcal{C}}^{m,n}$ in a Hamiltonian fashion. One may reduce by this Hamiltonian action along $$\mathrm{Kos}_{\mathcal{C}}^{m,n}\coloneqq\{(\alpha_1,\ldots,\alpha_{m+n},\beta_1,\ldots,\beta_{m+n})\in\mathfrak{g}_{\mathcal{C}}^{m+n}\times\mathfrak{g}_{\mathcal{C}}^{m+n}:\alpha_{n+1}=\cdots=\alpha_{m+n}=\beta_1=\cdots=\beta_n\in\mathrm{Kos}_{\mathcal{C}}\},$$ yielding $$(T^*G)_{\mathcal{C}}^{m,n}\coloneqq(T^*G)_{\mathcal{C}}^{m+n}\sll{\mathrm{Kos}_{\mathcal{C}}^{m,n}}((T^*G)_{\mathcal{C}}^{m+n}\times\overline{(T^*G)_{\mathcal{C}}^{m+n}}).$$
	We prove that these reduced spaces feature in the following.
	
	\begin{mtheorem}\label{612eq1e1}
		Let $\mathcal{C}$ be a conjugacy class of parabolic subalgebras of $\g$. There is an explicit TQFT $\eta : \Cob_2\longrightarrow\WS$ satisfying $\eta(S^1) = ((T^*G)_{\mathcal{C}})\reg$ and  $\eta(C_{m,n})=[(T^*G)_{\mathcal{C}}^{m,n}]$ for all $(m,n)\neq (0,0)$, where $C_{m,n}$ denotes the genus-0 cobordism from $m$ circles to $n$ circles.
	\end{mtheorem}
	
	We call $(T^*G)_{\mathcal{C}}^{m,n}$ the \textit{open Moore--Tachikawa variety} for $\mathcal{C}$ associated to $(m, n) \ne (0, 0)$. In the case $\mathcal{C}=\{\mathfrak{g}\}$, we omit the subscript and simply write $(T^*G)^{m,n}$. The varieties $(T^*G)^{m,n}$ are often called \textit{open Moore--Tachikawa varieties}, as their affinizations realize a scheme-theoretic version of the Moore--Tachikawa TQFT \cite{gin-kaz:23,cro-may2:24}.
	
	It is natural to seek relationships between the varieties $(T^*G)_{\mathcal{C}}^{m,n}$ for fixed $(m,n)\neq (0,0)$, as $\mathcal{C}$ ranges over the conjugacy classes of parabolic subalgebras of $\mathfrak{g}$. We achieve this by appealing to Weinstein's notion of a \textit{Lagrangian relation} from a symplectic variety $X$ to another such variety $Y$ \cite{Bates,weinstein-symplectic-category,WeinsteinCategories}; this is a set-theoretic relation from $X$ to $Y$ with the property of being an immersed Lagrangian in $X\times\overline{Y}$, where $\overline{Y}$ results from negating the symplectic form on $Y$. While a composition of relations is a relation set-theoretically, a composition of Lagrangian relations need not be a Lagrangian relation. A sufficient condition for the latter composition to be a Lagrangian submanifold is for the Lagrangian relations to be \textit{strongly composable} \cite{WeinsteinCategories}. If one is prepared to accept an immersed Lagrangian submanifold of $X\times\overline{Y}$, then it suffices for the Lagrangian relations to be \textit{composable} \cite{WehrheimWoodward,WeinsteinCategories}. These last few sentences give context for the following main result.
	
	\begin{mtheorem}\label{Theorem: Main Theorem 3}
		Fix $(m,n)\in(\mathbb{Z}_{\geq 0})^2$ with $(m,n)\neq (0,0)$, and let $\mathcal{C}$ be a conjugacy class of parabolic subalgebras of $\mathfrak{g}$. There is an explicit Lagrangian relation from $(T^*G)_{\mathcal{C}}^{m,n}$ to $(T^*G)^{m,n}$.
	\end{mtheorem}
	
	A different approach to building TQFTs from Lie-theoretic data is given in \cite{mai-may:25}.
	
	\subsection{Multiplicative counterparts of main results} We also obtain multiplicative analogues of Main Theorem \ref{1n9y2fsw} and Main Theorem \ref{612eq1e1} by replacing $\g$ and $T^*G$ with $G$ and $\mathrm{D}(G)=G\times G$, respectively. The former (resp. latter) is a quasi-Poisson $G$-variety (resp. quasi-Hamiltonian $G\times G$-variety). On the other hand, the components of the moment map $\mathrm{D}(G)\longrightarrow G\times G$ constitute the source and target of an algebraic Lie groupoid $\mathrm{D}(G)\tto G$. This groupoid is quasi-symplectic with respect to a quasi-Hamiltonian $2$-form on $\mathrm{D}(G)$ and Cartan $3$-form on $G$. Using multiplicative versions of the constructions above, we obtain algebraic quasi-symplectic groupoids $\mathrm{D}(G)_{\mathcal{C}} \tto G_{\mathcal{C}}$.
	Furthermore, replacing the Kostant slice $\mathrm{Kos}\s\g$ with the \textit{Steinberg slice} $\mathrm{Ste}\s G$ yields an analogue $\mathrm{D}(G)_{\mathcal{C}}^{m,n}$ of $(T^*G)_{\mathcal{C}}^{m,n}$. We have an induced TQFT $\delta : \Cob_2\longrightarrow\WS$ satisfying $\delta(S^1)=(\mathrm{D}(G)_{\mathcal{C}})_{\text{reg}}$ and $\delta(C_{m,n})=[(\mathrm{D}(G))_{\mathcal{C}}^{m,n}]$ for all $(m, n) \ne (0, 0)$. Setting $\mathcal{C} = \{G\}$ recovers the multiplicative open Moore--Tachikawa varieties \cite{balibanu-mayrand}.

	\subsection{Organization} Each section begins with a summary of its contents. Section \ref{Section: Dirac} provides some of the crucial background, conventions, and results in Hamiltonian and quasi-Hamiltonian geometry. In Section \ref{Section: New TQFTs}, we develop the pertinent parts of \cite{cro-may2:24} on TQFTs in a $1$-shifted Weinstein symplectic category. We establish the Lie-theoretic underpinnings of partial Grothendieck--Springer resolutions in Section \ref{Section: Some Lie-theoretic considerations in Poisson geometry}. This allows us to discuss the Poisson geometry of $\g_{\mathcal{C}}$ in Section \ref{Section: Partial Grothendieck--Springer resolutions}. In Section \ref{Section: TQFTs from}, we associate a TQFT to each partial Grothendieck--Springer resolution with a fixed Kostant slice. The above-advertised Lagrangian relations are derived in Section \ref{Section: Main}.
	
	Our attention turns to multiplicative / quasi-Poisson counterparts in Section \ref{Section: Some Lie-theoretic considerations in quasi-Poisson geometry}, where we recall the quasi-Hamiltonian $G$-variety structure on $G\times_{U(P)}P$. In Section \ref{Section: Multiplicative TQFTs}, we associate a TQFT to each multiplicative partial Grothendieck--Springer resolution $G_{\mathcal{C}}\longrightarrow G$ with a fixed Steinberg slice.
	
	\subsection{Acknowledgements} The authors thank Philip Boalch, George Lusztig, and Eckhard Meinrenken for explaining parts of \cite{boa-yam:15,boa:07,boa:11,boa:thesis}, \cite{lus-spa:79}, and \cite{ale-mei:24}, respectively. We also thank the referee for detailed, thoughtful, and constructive comments that have improved the manuscript. P.C. acknowledges partial support from the National Science Foundation Grant DMS-2454103 and Simons Foundation Grant MP-TSM-00002292. M.M was partially supported by the NSERC Discovery Grant RGPIN-2023-04587.
	
	\section{Quasi-symplectic groupoids}\label{Section: Dirac}
	We now develop aspects of the theory of quasi-symplectic groupoids. This is intended to contextualize the TQFT-theoretic results of \cite{cro-may2:24}, and lay a broad theoretical foundation for the entire manuscript. In Subsection \ref{Subsection: Fundamental conventions}, we set a few fundamental conventions. Subsection \ref{Subsection: Conventions} then establishes some of our conventions for Lie groupoids. We discuss aspects of quasi-symplectic groupoids in Subsection \ref{Subsection: Quasi-symplectic groupoids}. This allows us to integrate quotients of Hamiltonian $G$-spaces in Subsection \ref{Subsection: Previous}.
	
	In Subsection \ref{Subsection: reduction}, we review the theory of \textit{symplectic reduction along a submanifold} \cite{crooks-mayrand}. Certain Lagrangian relations arise in this context, as detailed in Subsection \ref{Subsection: Isotropic}. Attention subsequently shifts to quasi-Poisson manifolds in Subsection \ref{Subsection: Quasi-Poisson manifolds}. This leads to Subsection \ref{Subsection: Integrating}, where we construct quasi-symplectic groupoids over quotients of quasi-Hamiltonian manifolds.
	
	\subsection{Fundamental conventions}\label{Subsection: Fundamental conventions} In this manuscript, we work exclusively over $\mathbb{C}$; it is the base field underlying all objects for which a base field is presupposed (e.g. vector spaces, manifolds, algebraic varieties). The reader should interpret all differential-geometric notions as being in the holomorphic category.
	\subsection{Conventions for Lie groupoids}\label{Subsection: Conventions}
	Let $\G\tto X$ be a groupoid object in the category of complex manifolds, with source, target, and identity bisection maps denoted $\sss:\G\longrightarrow X$, $\ttt:\G\longrightarrow X$, and $\uuu:X\longrightarrow\G$, respectively. Given a submanifold $S\s X$, we write $\G\big\vert_S$ for the restriction $\sss^{-1}(S)\cap\ttt^{-1}(S)\tto S$. Let us adopt the simplified notation $\G_x\coloneqq\G\vert_{\{x\}}$ for the \textit{isotropy group} of $x\in X$. We also observe the following convention for groupoid multiplication: given $g,h\in\G$, the product $gh\in\G$ is defined if and only if $\sss(g)=\ttt(h)$. Multiplication thereby takes the form of a map $\mmm:\G\fp{\sss}{\ttt}\G\longrightarrow\G$.
	
	One calls $\G\tto X$ a \textit{Lie groupoid} if $\sss$ and $\ttt$ are submersions. Assume this to be the case, and let $M$ be a manifold. An \textit{action} of $\G$ on $M$ consists of holomorphic maps $\mu:M\longrightarrow X$ and $$\mathcal{A}:\G\fp{\sss}{\mu}M\longrightarrow M,\quad (g,p)\mapsto g\cdot p$$ that satisfy the natural generalizations of the left group action axioms. The associated \textit{action groupoid} is $\G\fp{\sss}{\mu}M\tto M$, with source $(g,p)\mapsto p$, target $(g,p)\mapsto g\cdot p$, identity bisection $p\mapsto (\uuu(p),p)$, and multiplication $(g,p)(h,q)=(gh,q)$. We denote the action groupoid by $\G\ltimes M$.
	
	\subsection{Quasi-symplectic groupoids}\label{Subsection: Quasi-symplectic groupoids}
	A \textit{quasi-symplectic groupoid} is a triple $(\G,\omega,\phi)$ of a Lie groupoid $\G\tto X$, multiplicative $2$-form $\omega\in\Omega^2(\G)$, and $3$-form $\phi\in\Omega^3(X)$ satisfying $\mathrm{d}\omega=\sss^*\phi-\ttt^*\phi$, $\dim\G=2\dim X$, and $\ker\omega_x\cap\ker\mathrm{d}\sss_x\cap\ker\mathrm{d}\ttt_x=\{0\}$ for all $x\in X$, where one uses the identity bisection to regard $x$ as belonging to $\G$. We let $\overline{\G}$ denote the ``opposite" $(\G,-\omega,-\phi)$ of a quasi-symplectic groupoid $(\G,\omega,\phi)$.
	
	Consider a quasi-symplectic groupoid $(\G \tto X, \omega, \phi)$ and manifold $M$ equipped with a 2-form $\gamma\in\Omega^2(M)$. Suppose that $\G$ acts on $M$ along a holomorphic map $\mu : M \too X$. This action is called \textit{Hamiltonian} \cite{xu} if it satisfies the following properties:
	\begin{itemize}
		\item
		$\mu^*\phi = -\mathrm{d}\gamma$;
		\item
		the graph of the action, i.e.\ $\{(g, m, g \cdot m) : (g, m) \in \G\fp{\sss}{\mu}M\}$, is isotropic in $\G \times M \times M$ with respect to $(\omega, \gamma, -\gamma)$;
		\item
		$\ker \mathrm{d}\mu \cap \ker \gamma = 0$.
	\end{itemize}
	
	A quasi-symplectic groupoid $(\G\tto X,\omega,\phi)$ is called \textit{symplectic} if $\phi=0$ and $\omega$ is non-degenerate. It turns out that a Lie groupoid $\G\tto X$ and $2$-form $\omega\in\Omega^2(\G)$ constitute a symplectic groupoid if and only if the graph of groupoid multiplication is isotropic in $\G\times\G\times\overline{\G}$. The Lie algebroid of $\G$ is then $T^*X\longrightarrow X$, along with an anchor map that defines a Poisson structure on $X$. An \textit{integration} of a Poisson manifold $X$ is a symplectic groupoid $\G\tto X$ that induces the given Poisson structure on $X$. 
	
	We now discuss a paradigm example of a symplectic groupoid. To this end, let $G$ be a Lie group with Lie algebra $\g$. Use the left trivialization to identify $T^*G$ with $G\times\g^*$. While the former is symplectic, the latter may be viewed as the action groupoid of the coadjoint action. These structures are compatible in the sense that $T^*G\tto\g^*$ is a symplectic groupoid integrating the canonical Poisson structure on $\g^*$. The source $\sss:T^*G\longrightarrow\g^*$ and target $\ttt:T^*G\longrightarrow\g^*$ are given by $\sss(g,\xi)=\mathrm{Ad}_g^*(\xi)$ and $\ttt(g,\xi)=\xi$, respectively, where $\mathrm{Ad}^*:G\longrightarrow\operatorname{GL}(\g^*)$ is the coadjoint representation of $G$. One calls $T^*G\tto\g^*$ the \textit{cotangent groupoid} of $G$. Symplectic Hamiltonian $G$-spaces are equivalent to symplectic manifolds equipped with Hamiltonian actions of $T^*G\tto\g^*$ \cite[Example 3.9]{mikami-weinstein}. 
	
	\subsection{Integrating quotients of symplectic manifolds}\label{Subsection: Previous}
	
	Let $(M, \omega)$ be a symplectic manifold. Suppose that a Lie group $G$ acts on $M$ in a Hamiltonian fashion with moment map $\mu : M \longrightarrow \g^*$. Equip the total space of the pair groupoid $\mathrm{Pair}(M)=M \times M\tto M$ with the symplectic structure on $M\times\overline{M}$, where $\overline{M}$ results from negating the symplectic form on $M$; this yields a symplectic groupoid integrating $M$. The diagonal action of $G$ on $M \times M$ is then Hamiltonian with moment map
	\[
	M \times M \longrightarrow \g^*, \quad (p, q) \mapsto \mu(p) - \mu(q).
	\]
	If the $G$-action on $M$ is free and proper, then $$\mathrm{Pair}(M)\sll{0}G=(M \times_{\g^*} M) / G$$ is a Lie groupoid over $M/G$. We also know $\mathrm{Pair}(M)\sll{0}G$ and $M/G$ to be symplectic and Poisson, respectively. These considerations give context for the following result.
	
	\begin{proposition}\label{Proposition: Integration}
		If the action of $G$ on $M$ is free and proper, then $\mathrm{Pair}(M)\sll{0}G\tto M/G$ is a symplectic groupoid integrating the Poisson manifold $M/G$.
	\end{proposition}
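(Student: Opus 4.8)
The plan is to exhibit $\mathrm{Pair}(M)\sll{0}G\tto M/G$ as the Marsden--Weinstein reduction of the symplectic groupoid $\mathrm{Pair}(M)\tto M$ by a Hamiltonian $G$-action that is compatible with the groupoid structure, and then to show that the symplectic groupoid structure descends. Write $\Omega=\pr_1^*\omega-\pr_2^*\omega$ for the symplectic form on $\mathrm{Pair}(M)=M\times\overline{M}$, so that $\sss=\pr_2$ and $\ttt=\pr_1$. First I would record that the diagonal $G$-action on $M\times M$ acts by symplectic groupoid automorphisms: each $g\in G$ is a symplectomorphism of $M\times\overline{M}$ (because it acts symplectically on $M$), and $\sss$, $\ttt$, $\uuu$, inversion, and $\mmm$ are all equivariant for the diagonal action on arrows and the given action on the base. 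The moment map for this action is $J(p,q)=\mu(p)-\mu(q)=(\ttt^*\mu-\sss^*\mu)(p,q)$. Freeness and properness of the $G$-action on $M$ make $0$ a regular value of $J$ and make the restricted action on $J^{-1}(0)=M\times_{\g^*}M$ free and proper, so $J^{-1}(0)/G$ is a smooth symplectic manifold whose reduced form $\omega_{\mathrm{red}}$ is characterized by $\pi^*\omega_{\mathrm{red}}=\iota^*\Omega$, where $\iota:J^{-1}(0)\hookrightarrow M\times M$ and $\pi:J^{-1}(0)\to J^{-1}(0)/G$ are the inclusion and quotient maps.

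Next I would note that $J^{-1}(0)$ is a wide Lie subgroupoid of $\mathrm{Pair}(M)$, since the constraint $\mu(p)=\mu(q)$ is preserved under composition $(p,q)(q,r)=(p,r)$, under units $(p,p)$, and under inversion $(p,q)\mapsto(q,p)$. Because $G$ acts on this subgroupoid by groupoid automorphisms, freely and properly, the structure maps descend and turn $J^{-1}(0)/G\tto M/G$ into a Lie groupoid; freeness and properness ensure that the relevant quotients are manifolds and that the descended source and target remain submersions. A dimension count then gives $\dim\bigl(J^{-1}(0)/G\bigr)=(2\dim M-\dim G)-\dim G=2\dim(M/G)$, so the dimension requirement for a symplectic groupoid holds.

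The crux, and the step I expect to be the main obstacle, is to verify that $\omega_{\mathrm{red}}$ is multiplicative. I would prove this by transferring the multiplicativity of $\Omega$ through reduction. The delicate point is the comparison of composable arrows: one must check that the natural map from the reduction of the composable-arrow manifold of $J^{-1}(0)$ to the composable-arrow manifold of $J^{-1}(0)/G$ is a diffeomorphism, the content being that a composable pair downstairs lifts to a composable pair upstairs after adjusting by a unique group element, where uniqueness uses freeness. Granting this, I would pull the $2$-form $\mmm_{\mathrm{red}}^*\omega_{\mathrm{red}}-\pr_1^*\omega_{\mathrm{red}}-\pr_2^*\omega_{\mathrm{red}}$ on the composable-arrow manifold of $J^{-1}(0)/G$ back along the quotient submersion; using $\pi^*\omega_{\mathrm{red}}=\iota^*\Omega$ and the multiplicativity $\mmm^*\Omega=\pr_1^*\Omega+\pr_2^*\Omega$ of $\Omega$ on $\mathrm{Pair}(M)$, this pullback is the restriction to $J^{-1}(0)$ of a vanishing form, hence is zero. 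As the quotient map is a surjective submersion, the reduced combination vanishes, establishing multiplicativity. Non-degeneracy of $\omega_{\mathrm{red}}$ is part of the Marsden--Weinstein theorem, so $\bigl(J^{-1}(0)/G\tto M/G,\ \omega_{\mathrm{red}}\bigr)$ is a symplectic groupoid.

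Finally, to see that this symplectic groupoid integrates the reduced Poisson manifold $M/G$, I would invoke that a symplectic groupoid induces the unique Poisson structure on its base for which the target is a Poisson map and the source is anti-Poisson. For $\mathrm{Pair}(M)\tto M$ this recovers the symplectic Poisson structure on $M$, with $\ttt=\pr_1$ Poisson and $\sss=\pr_2$ anti-Poisson. Since the quotient $q:M\to M/G$ is a Poisson map for the reduced structure and the square $q\circ\ttt\circ\iota=\ttt_{\mathrm{red}}\circ\pi$ commutes, the reduction-of-brackets formula for $G$-invariant functions on $M\times M$ shows that $\ttt_{\mathrm{red}}$ is Poisson and $\sss_{\mathrm{red}}$ anti-Poisson with respect to the reduced Poisson structure on $M/G$. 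By the uniqueness just mentioned, the Poisson structure that the reduced symplectic groupoid induces on $M/G$ coincides with the reduced one, which completes the argument.
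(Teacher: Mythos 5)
Your proof is correct, and it is built on the same underlying construction as the paper's---realizing $\mathrm{Pair}(M)\sll{0}G$ as the reduction of the pair groupoid at level zero of $J=\ttt^*\mu-\sss^*\mu$---but the two verifications diverge at both key steps. For the symplectic-groupoid axioms, the paper invokes the criterion recalled in Subsection \ref{Subsection: Quasi-symplectic groupoids}, that a Lie groupoid with a $2$-form is a symplectic groupoid if and only if the graph of multiplication is isotropic in $\G\times\G\times\overline{\G}$; its entire argument for this half is that the multiplication graph $\Gamma\s\mathrm{Pair}(M)\times\mathrm{Pair}(M)\times\overline{\mathrm{Pair}(M)}$ descends to a Lagrangian in the triple product of reductions, which packages multiplicativity and nondegeneracy in a single stroke. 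Your direct pullback computation on the composable-arrow manifold---including the lifting of composable pairs after adjusting by a unique group element, which you correctly flag as the delicate point and which indeed rests on freeness---is the hands-on version of that descent, with nondegeneracy supplied separately by Marsden--Weinstein. For the integration claim, the paper identifies the Lie algebroid of the reduced groupoid with $T^*(M/G)$ via the symplectic form and reads off that the anchor is the quotient Poisson bivector, whereas you argue at the level of functions: $\ttt_{\mathrm{red}}$ is Poisson for the quotient bracket by descent of brackets of $G$-invariant functions, and uniqueness of the Poisson structure induced on the base of a symplectic groupoid closes the loop. Your route is more elementary and self-contained, making explicit the smoothness, submersion, and lifting points the paper leaves implicit; the paper's is shorter but relies on the graph criterion and the algebroid identification without spelling out the descent details.
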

	
	\begin{proof}
		Note that the graph of multiplication $\Gamma \s\mathrm{Pair}(M) \times \mathrm{Pair}(M) \times \overline{\mathrm{Pair}(M)}$ descends to a Lagrangian submanifold of $(\mathrm{Pair}(M)\sll{0}G) \times (\mathrm{Pair}(M)\sll{0}G) \times(\overline{\mathrm{Pair}(M)\sll{0}G})$, so that $\mathrm{Pair}(M)\sll{0}G$ is a symplectic groupoid over $M/G$.
		The symplectic form on $\mathrm{Pair}(M)\sll{0}G$ provides an isomorphism from its Lie algebroid to $T^*(M/G)$. It follows that the anchor map $T^*(M/G) \longrightarrow T(M/G)$ is the Poisson structure on $M/G$.
	\end{proof}
	
	\begin{remark}
		It seems likely that this result could be deduced from techniques in \cite{bra-fer:14}.
	\end{remark}
	
	\subsection{Symplectic reduction along a submanifold}\label{Subsection: reduction}
	Let $\G\tto X$ be a symplectic groupoid. Write $\sigma\in H^0(X,\wedge^2(TX))$ for the Poisson bivector field on $X$, and $\sigma^{\vee}:T^*X\longrightarrow TX$ for its contraction with cotangent vectors. One calls a submanifold $S\s X$ \textit{pre-Poisson} if $$L_S\coloneqq(\sigma^{\vee})^{-1}(TS)\cap\mathrm{ann}_{T^*X}(TS)\longrightarrow S$$ has constant fiber dimension, in which case $L_S$ is Lie subalgebroid of $(T^*X,\sigma^{\vee})$ \cite{cattaneo-zambon-2007,cattaneo-zambon-2009}. Suppose that $\H\tto S$ is a Lie subgroupoid of $\G$ over a pre-Poisson submanifold $S\s X$. We call $\H$ a \textit{stabilizer subgroupoid} if $\H$ is isotropic in $\G$ and has $L_S$ as its Lie algebroid \cite{crooks-mayrand}. Noteworthy special cases of these definitions arise for $\G\tto X$ the (left-trivialized) cotangent groupoid $T^*G\tto\g^*$ of a Lie group $G$. The submanifold $\{\xi\}\s\g^*$ is pre-Poisson for all $\xi\in\g^*$, and admits $G_{\xi}\longrightarrow\{\xi\}$ as a stabilizer subgroupoid. This observation foreshadows a generalization of Marsden--Weinstein reduction \cite{marsden-weinstein}; the following are some details.
	
	Suppose that a symplectic groupoid $\G\tto X$ acts on a symplectic manifold $(M,\omega)$ in a Hamiltonian fashion with moment map $\mu:M\longrightarrow X$. Let $\H\tto S$ be a stabilizer subgroupoid of $\G$ over a pre-Poisson submanifold $S\s X$. Note that $\H$ acts on $\mu^{-1}(S)$. Consider the inclusion $j:\mu^{-1}(S)\longrightarrow M$ and quotient $\pi:\mu^{-1}(S)\longrightarrow\mu^{-1}(S)/\mathcal{H}$. If $\H$ acts freely and properly on $\mu^{-1}(S)$, then $\mu$ is transverse to $S$, and the manifold $\mu^{-1}(S)/\mathcal{H}$ carries a unique symplectic form $\overline{\omega}$ satisfying $\pi^*\overline{\omega}=j^*\omega$. The symplectic manifold
	\[
	M \sll{S, \H} \G \coloneqq (\mu^{-1}(S) / \H,\overline{\omega})
	\]
	is called the \emph{symplectic reduction of $M$ by $\G$ along $S$ with respect to $\H$} \cite{crooks-mayrand}.
	We use the simpler notation $M \sll{S} \G$ if $\H$ is source-connected; all source-connected integrations of $L_S$ yield the same symplectic manifold.
	
	The preceding construction is more general than described above; it also holds for smooth manifolds, complex analytic spaces, complex algebraic varieties, and affine schemes. Another feature of this construction is that it generalizes many of the approaches to symplectic reduction that have developed over the last 50 years. We refer the interested reader to \cite{crooks-mayrand} and \cite{cro-may:24} for precise details. A more leisurely introduction can be found in \cite{cro+:25}.
	
	\subsection{Some Lagrangian relations}\label{Subsection: Isotropic}
	Let $X$ be a Poisson manifold. Recall that a submanifold $S\s X$ is called a \textit{Poisson transversal} if it intersects each symplectic leaf of $X$ transversely and in a symplectic submanifold of that leaf. The Poisson structure on $X$ then induces a Poisson structure on $S$ \cite[Lemma 3]{fre-mar:17}. One also knows that the inverse image of a Poisson transversal under a Poisson map is a Poisson transversal \cite[Lemma 7]{fre-mar:17}. It is also clear that the Poisson transversals in a symplectic manifold are the symplectic submanifolds of that manifold. These considerations feature in the following result.
	
	\begin{lemma}\label{Lemma: Lagrangian correspondence}
		Let a symplectic groupoid $\G \tto X$ act on a symplectic manifold $(M,\omega)$ in a Hamiltonian fashion with moment map $\mu : M \longrightarrow X$. Suppose that $\H\tto S$ is a stabilizer subgroupoid of $\G$ over a pre-Poisson submanifold $S\s X$. Assume that $\H$ acts freely and properly on $\mu^{-1}(S)$.
		\begin{itemize}
			\item[\textup{(i)}] The submanifold $\{(m,[m]):m\in\mu^{-1}(S)\}\s M\times\overline{M\sll{S,\mathcal{H}}\G}$ is isotropic.
			\item[\textup{(ii)}] The submanifold in \textup{(i)} is Lagrangian if and only if $S$ is coisotropic in $X$.
			\item[\textup{(iii)}] If $P\s X$ is a Poisson transversal containing $S$ as a coisotropic submanifold, then the submanifold in \textup{(i)} is Lagrangian in $\mu^{-1}(P)\times \overline{M\sll{S,\mathcal{H}}\G}$.
		\end{itemize}
	\end{lemma}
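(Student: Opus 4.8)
The plan is to handle the isotropy assertion uniformly across all three parts, and then to reduce each Lagrangian claim to a dimension count governed by $\rank L_S$. First I would set $N\coloneqq\mu^{-1}(S)$, let $j\colon N\hookrightarrow M$ be the inclusion and $\pi\colon N\to N/\H=M\sll{S,\H}\G$ the quotient, so that $\pi^*\overline{\omega}=j^*\omega$ by construction of the reduced form. The map $\iota\colon N\to M\times\overline{M\sll{S,\H}\G}$, $m\mapsto(m,[m])$, is an embedding onto the submanifold in (i), with $\pr_1\circ\iota=j$ and $\pr_2\circ\iota=\pi$. Hence
\[
\iota^*\bigl(\pr_1^*\omega-\pr_2^*\overline{\omega}\bigr)=j^*\omega-\pi^*\overline{\omega}=0,
\]
proving (i). Since this uses only the restriction of $\omega$ to $N$, the same computation gives isotropy of the submanifold inside $\mu^{-1}(P)\times\overline{M\sll{S,\H}\G}$, which is exactly the isotropy needed in (iii).

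For (ii), I would compare dimensions. The $\H$-action on $N$ is free, so its orbits have dimension equal to the source-fibre dimension of $\H$, which is $\rank L_S$ because $\H$ is a stabilizer subgroupoid; thus $\dim M\sll{S,\H}\G=\dim N-\rank L_S$. Freeness and properness force $\mu\pitchfork S$, so $\dim N=\dim M-\codim_X S$. The isotropic image $\iota(N)$ is Lagrangian in $M\times\overline{M\sll{S,\H}\G}$ precisely when $2\dim N=\dim M+\dim M\sll{S,\H}\G$, i.e. when $\rank L_S=\codim_X S$. Since $L_S\s\mathrm{ann}_{T^*X}(TS)$, a bundle of rank $\codim_X S$, equality holds if and only if $\mathrm{ann}_{T^*X}(TS)\s(\sigma^{\vee})^{-1}(TS)$, that is, $\sigma^{\vee}(\mathrm{ann}_{T^*X}(TS))\s TS$. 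This is exactly coisotropy of $S$, giving (ii).

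Part (iii) is where the substance lies. I would first note that $\mu$ is a Poisson map, so $\mu^{-1}(P)$ is the preimage of a Poisson transversal under a Poisson map and hence itself a Poisson transversal in $M$ by \cite[Lemma 7]{fre-mar:17}; a Poisson transversal in the symplectic manifold $M$ is a symplectic submanifold, and $\mu\pitchfork P$ gives $\dim\mu^{-1}(P)=\dim M-\codim_X P$. As $S\s P$, the submanifold from (i) lies in $\mu^{-1}(P)\times\overline{M\sll{S,\H}\G}$ and is isotropic by the computation above. The dimension count now demands $\rank L_S=\dim\mu^{-1}(P)-\dim N=\codim_X S-\codim_X P=\codim_P S$, so everything reduces to the pointwise identity $\rank L_S=\codim_P S$.

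The hard part is this last linear-algebra lemma. I would fix $p\in S$ and use the Poisson-transversal splitting $T_pX=T_pP\oplus V$ with $V\coloneqq\sigma^{\vee}(\mathrm{ann}_{T^*_pX}(T_pP))$, on which $\sigma^{\vee}$ is injective so that $\dim V=\codim_X P$. Dualizing this splitting identifies $T^*_pP$ with $\mathrm{ann}_{T^*_pX}(V)$ and yields $\mathrm{ann}_{T^*_pX}(T_pS)=\mathrm{ann}_{T^*_pP}(T_pS)\oplus\mathrm{ann}_{T^*_pX}(T_pP)$. Applying $\sigma^{\vee}$ sends the second summand onto $V$, while the formula $\sigma_P^{\vee}=\pr_{T_pP}\circ\sigma^{\vee}$ for the induced bivector on $P$ \cite{fre-mar:17}, together with the coisotropy $\sigma_P^{\vee}(\mathrm{ann}_{T^*_pP}(T_pS))\s T_pS$, forces $\sigma^{\vee}(\mathrm{ann}_{T^*_pX}(T_pS))\s T_pS+V$. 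Since $V=\sigma^{\vee}(\mathrm{ann}_{T^*_pX}(T_pP))\s\sigma^{\vee}(\mathrm{ann}_{T^*_pX}(T_pS))$ and $T_pS\cap V=0$, this gives $\sigma^{\vee}(\mathrm{ann}_{T^*_pX}(T_pS))+T_pS=T_pS\oplus V$, of dimension $\dim S+\codim_X P$. A rank--nullity computation for the composite $\mathrm{ann}_{T^*_pX}(T_pS)\xrightarrow{\sigma^{\vee}}T_pX\to T_pX/T_pS$, whose kernel is $(L_S)_p$, then yields $\rank L_S=\dim X-(\dim S+\codim_X P)=\codim_P S$, completing the count and hence (iii). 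I expect the splitting bookkeeping in this final paragraph to be the only genuinely delicate point; everything else is a short pullback or dimension identity.
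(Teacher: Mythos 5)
Your proposal is correct. For parts (i) and (ii) it coincides with the paper's argument: the paper proves isotropy by evaluating the product form on tangent vectors $(v,\mathrm{d}\pi_m(v))$ and using $\pi^*\overline{\omega}=j^*\omega$, which is exactly your pullback identity $\iota^*(\pr_1^*\omega-\pr_2^*\overline{\omega})=j^*\omega-\pi^*\overline{\omega}=0$ written pointwise, and it then runs the same dimension count, reducing the Lagrangian condition to $\rank L_S=\dim X-\dim S$, i.e.\ $L_S=\mathrm{ann}_{T^*X}(TS)$, i.e.\ coisotropy of $S$ in $X$.

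Where you genuinely diverge is part (iii). The paper disposes of it in one line (``Part (iii) follows immediately from (ii)''), the implicit route being to restrict all the data to the Poisson transversal: the restriction $\G\big\vert_P$ is a symplectic groupoid over $P$, $\mu^{-1}(P)$ is symplectic with $\mu$ restricting to a Poisson map to $P$, $\H$ remains a stabilizer subgroupoid over $S$ viewed inside $P$, the reduced space is unchanged, and then (ii) applies verbatim with $(X,M)$ replaced by $(P,\mu^{-1}(P))$. You instead prove (iii) directly, without invoking any restriction-to-transversal machinery: you reduce the Lagrangian condition to the pointwise identity $\rank L_S=\codim_P S$ and establish it by the splitting $T_pX=T_pP\oplus V$ with $V=\sigma^{\vee}(\mathrm{ann}_{T^*_pX}(T_pP))$, the decomposition $\mathrm{ann}_{T^*_pX}(T_pS)=\mathrm{ann}_{T^*_pP}(T_pS)\oplus\mathrm{ann}_{T^*_pX}(T_pP)$, the formula $\sigma_P^{\vee}=\pr_{T_pP}\circ\sigma^{\vee}$, and rank--nullity applied to $\mathrm{ann}_{T^*_pX}(T_pS)\to T_pX/T_pS$, whose kernel is indeed $(L_S)_p$. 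I checked this computation and it is sound; in effect you have supplied the linear algebra that justifies the paper's ``immediately'' (it is what guarantees that $L_S$, computed in $X$, has the rank that the restricted version of (ii) requires). The trade-off: the paper's route is shorter but leans on known structural facts about Poisson transversals and their compatibility with symplectic groupoids and reduction; yours is self-contained, makes the dimension bookkeeping fully explicit, and would be the argument of choice if one did not want to verify that the reduction of $\mu^{-1}(P)$ by $\G\big\vert_P$ along $S$ agrees with $M\sll{S,\H}\G$. Your side remarks are also consistent with the paper's framework: transversality of $\mu$ to $S$ under free and proper $\H$-action is stated in Subsection 2.5, and transversality of the Poisson map $\mu$ to $P$ (giving $\dim\mu^{-1}(P)=\dim M-\codim_X P$) is part of the cited \cite[Lemma 7]{fre-mar:17}.
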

	
	\begin{proof}
		Write $Y$ for the submanifold in (i) and $\pi:\mu^{-1}(S)\longrightarrow M\sll{S,\mathcal{H}}\G$ for the quotient map. Given $(m,[m])\in Y$, one finds that $$T_{(m,[m])}Y=\{(v,\mathrm{d}\pi_m(v)):v\in(\mathrm{d}\mu_m)^{-1}(T_{\mu(m)}S)\}\s T_m M\oplus T_{[m]}(M\sll{S,\mathcal{H}}\G).$$ Let us therefore fix two vectors $(v_1,\mathrm{d}\pi_m(v_1))$ and $(v_2,\mathrm{d}\pi_m(v_2))$ in $T_{(m,[m])}Y$. Write $\overline{\omega}$ for the symplectic forms on $M\sll{S,\mathcal{H}}\G$ and $j:\mu^{-1}(S)\longrightarrow M$ for the inclusion. Since $\pi^*\overline{\omega}=j^*\omega$, we have $$\overline{\omega}_{[m]}(\mathrm{d}\pi_m(v_1),\mathrm{d}\pi_m(v_2))=\omega_m(v_1,v_2).$$ The calculation
		$$\omega_m(v_1,v_2)-\overline{\omega}_{[m]}(\mathrm{d}\pi_m(v_1),\mathrm{d}\pi_m(v_2))=0$$ then shows $Y$ to be isotropic in $M\times\overline{M\sll{S,\mathcal{H}}\G}$, verifying (i).
		
		In light of (i), $Y$ is Lagrangian in $M\times\overline{M\sll{S,\mathcal{H}}\G}$ if and only if $\dim Y=\frac{1}{2}(\dim(M\times\overline{M\sll{S,\mathcal{H}}\G})$. We also have the identities
		$$\dim Y=\dim\mu^{-1}(S)=\dim M-\dim X+\dim S\quad\text{and}\quad\dim(M\sll{S,\mathcal{H}}\G)=\dim\mu^{-1}(S)-\mathrm{rank}\hspace{2pt}L_S.$$ It is now straightforward to verify that $\dim Y=\frac{1}{2}(\dim(M\times\overline{M\sll{S,\mathcal{H}}\G})$ if and only if $\mathrm{rank}\hspace{2pt}L_S=\dim X-\dim S$. The latter equation holds if and only if $L_S=\mathrm{ann}_{T^*X}(TS)$, as $\mathrm{rank}\hspace{2pt}(\mathrm{ann}_{T^*X}(TS))=\dim X-\dim S$. Letting $\sigma^{\vee}:T^*X\longrightarrow TX$ denote the contraction of the Poisson structure $\sigma$ on $X$, the condition $L_S=\mathrm{ann}_{T^*X}(TS)$ becomes $\mathrm{ann}_{T^*X}(TS)\s(\sigma^{\vee})^{-1}(TS)$. This new condition is precisely the definition of $S$ being coisotropic in $X$, completing the proof of (ii).
		
		Part (iii) follows immediately from (ii).
	\end{proof}
	
	There is a another class of Lagrangian submanifolds arising in the context of symplectic reduction along a submanifold. To this end, let a symplectic groupoid $\G \tto X$ act in a Hamiltonian fashion on a symplectic manifold $M$ with moment map $\mu : M \longrightarrow X$.
	The definition of a Hamiltonian action implies that
	\[
	\mathcal{L}_\mu \coloneqq \{(g \cdot p, p, g) \in M \times M \times \G : (g, p) \in \G \ltimes M\}
	\]
	is a Lagrangian subgroupoid of $\mathrm{Pair}(M) \times \overline{\G}$. More generally, consider a symplectic groupoid $\G \tto X$, a Lie group $G$, and a Hamiltonian $\G\times T^*G$-space $M$ with moment map $(\mu,\nu):M\longrightarrow X\times\g^*$. The equivalence between Hamiltonian $G$-spaces and Hamiltonian $T^*G$-spaces renders $M$ a Hamiltonian $G$-space. Suppose that $G$ acts freely and properly on $M$. Proposition \ref{Proposition: Integration} yields a symplectic groupoid $$(\tilde{\sss}, \tilde{\ttt}) :\mathrm{Pair}(M)\sll{0}G=(M\times_{\nu}\overline{M})\tto M/G.$$ It is also clear that $\mu$ descends to a Poisson map $\overline{\mu}:M/G\longrightarrow X$.
	We conclude that
	\[
	\mathcal{L}_{\overline{\mu}} \coloneqq \{([g \cdot p, p], g) \in (M \times_\nu \overline{M})/G \times \overline{\G} : (g, p) \in \G \ltimes M\}
	\]
	is a Lagrangian relation from $\mathrm{Pair}(M)\sll{0}G$ to $\G$.
	The following result will be useful in Section \ref{Section: Main}.
	
	\begin{lemma}\label{jmzayi83}
		Retain the objects and notation of the previous paragraph. Let $S \s X$ be a Poisson transversal, and set $\tilde{S} \coloneqq \overline{\mu}^{-1}(S) \s M/G$.
		Then $\tilde{\ttt}^{-1}(\tilde{S}) \times \ttt^{-1}(S)$ is a symplectic submanifold of $(M \times_\nu \overline{M})/G \times \overline{\G}$, and its intersection with $\mathcal{L}_{\overline{\mu}}$ is a Lagrangian submanifold.
	\end{lemma}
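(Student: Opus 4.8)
The plan is to treat the two assertions separately, deducing both from the behaviour of Poisson transversals under Poisson maps recorded in \cite[Lemma 7]{fre-mar:17}. Throughout, set $V \coloneqq ((M\times_\nu\overline M)/G)\times\overline\G$ and $W\coloneqq \tilde\ttt^{-1}(\tilde S)\times\ttt^{-1}(S)$.

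For the first assertion, I would argue that each factor of $W$ is symplectic. The target $\ttt : \G\to X$ of a symplectic groupoid is Poisson up to sign, and a Poisson transversal for a Poisson structure is equally one for its opposite; hence \cite[Lemma 7]{fre-mar:17} shows $\ttt^{-1}(S)$ to be a Poisson transversal in $\G$. Since $\G$ is symplectic, its Poisson transversals are exactly its symplectic submanifolds, so $\ttt^{-1}(S)$ is symplectic, and it remains so in $\overline\G$. For the other factor, $\overline\mu : M/G\to X$ is Poisson, so $\tilde S=\overline\mu^{-1}(S)$ is a Poisson transversal in $M/G$ by the same lemma; applying the lemma once more to the target $\tilde\ttt$ of the symplectic groupoid $\mathrm{Pair}(M)\sll{0}G\tto M/G$ shows $\tilde\ttt^{-1}(\tilde S)$ to be a symplectic submanifold of $(M\times_\nu\overline M)/G$. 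A product of symplectic submanifolds is symplectic, establishing the first assertion.

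For the second assertion, I would begin by computing the intersection set-theoretically. A point of $\mathcal{L}_{\overline\mu}$ has the form $([g\cdot p,p],g)$ with $\sss(g)=\mu(p)$, and $\tilde\ttt([g\cdot p,p])=[g\cdot p]$. The two conditions defining membership in $W$ thus read $\overline\mu([g\cdot p])=\mu(g\cdot p)\in S$ and $\ttt(g)\in S$. The moment-map identity $\mu(g\cdot p)=\ttt(g)$ collapses both to the single requirement $\ttt(g)\in S$, whence
$$\mathcal{L}_{\overline\mu}\cap W=\{([g\cdot p,p],g):(g,p)\in\G\ltimes M,\ \ttt(g)\in S\}.$$
This collapse is the crux of the statement: although $W$ has codimension $2\,\codim_X S$ in $V$, it carves out only codimension $\codim_X S$ inside the Lagrangian $\mathcal{L}_{\overline\mu}$.

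It remains to verify that this intersection is a smooth Lagrangian submanifold of $W$, which I expect to be the main obstacle. Isotropy is automatic, since $\mathcal{L}_{\overline\mu}\cap W\s\mathcal{L}_{\overline\mu}$ and the symplectic form on $W$ is the restriction of the ambient form on $V$, which vanishes on $\mathcal{L}_{\overline\mu}$. For smoothness and the dimension count, I would proceed as follows. Factoring $\mu=\overline\mu\circ q$ through the submersive quotient $q:M\to M/G$ and using $\overline\mu\pitchfork S$ from \cite[Lemma 7]{fre-mar:17} gives $\mu\pitchfork S$; precomposing with the submersive action map $(g,p)\mapsto g\cdot p$ then shows that $\{(g,p)\in\G\ltimes M:\mu(g\cdot p)=\ttt(g)\in S\}$ is a smooth submanifold of $\G\ltimes M$ of codimension $\codim_X S$. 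The assignment $(g,p)\mapsto([g\cdot p,p],g)$ exhibits $\mathcal{L}_{\overline\mu}$ as the quotient of $\G\ltimes M$ by the free and proper action $h\cdot(g,p)=(g,h\cdot p)$ — here one uses that the $\G$- and $G$-actions on $M$ commute and that $\mu$ is $G$-invariant — and the submanifold just described is $G$-invariant. Its quotient is therefore $\mathcal{L}_{\overline\mu}\cap W$, a smooth manifold of dimension $\dim\mathcal{L}_{\overline\mu}-\codim_X S=\tfrac12\dim V-\codim_X S=\tfrac12\dim W$. An isotropic submanifold of half the dimension of $W$ is Lagrangian, completing the proof.
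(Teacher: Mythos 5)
Your proposal is correct and takes essentially the same route as the paper: the first assertion via stability of Poisson transversals under the Poisson maps $\overline{\mu}$, $\ttt$, $\tilde{\ttt}$ (i.e.\ \cite[Lemma 7]{fre-mar:17}), and the second by exhibiting the intersection as a smooth space of half the dimension of $\tilde{\ttt}^{-1}(\tilde{S})\times\ttt^{-1}(S)$, with isotropy automatic from containment in $\mathcal{L}_{\overline{\mu}}$. The only cosmetic difference is that you realize the intersection as the free $G$-quotient of $\{(g,p)\in\G\ltimes M:\ttt(g)\in S\}$, whereas the paper parametrizes it directly as the fiber product $\G\ltimes_S M/G=\ttt^{-1}(S)\times_S\overline{\mu}^{-1}(S)$ and computes its dimension additively --- the same count in different clothing.
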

	
	\begin{proof}
		As $\overline{\mu}$ is a Poisson map, $\tilde{S}$ is a Poisson transversal in $M/G$. This implies the first claim.
		For the second claim, it remains to check that $\mathcal{L}_{\overline{\mu}} \cap (\tilde{\ttt}^{-1}(\tilde{S}) \times \ttt^{-1}(S))$ is smooth and has the correct dimension.
		
		Let us set
		\[
		\G \ltimes_S M/G \coloneqq \{(g, [p]) \in \G \times M/G : \ttt(g) = \mu(p) \in S\} = \ttt^{-1}(S) \times_S \overline{\mu}^{-1}(S).
		\]
		Note that we have an embedding
		\[
		\G \ltimes_S M/G \too (M \times_\nu \overline{M})/G \times \overline{\G}, \quad
		(g, [p]) \mtoo ([g \cdot p, p], g),
		\]
		whose image is $\mathcal{L}_{\overline{\mu}} \cap (\tilde{\ttt}^{-1}(\tilde{S}) \times \ttt^{-1}(S))$.
		At the same time,
		\begin{align*}
			\dim(\G \ltimes_S M/G) &= \dim \ttt^{-1}(S) + \dim \overline{\mu}^{-1}(S) - \dim S \\
			&= \big(\dim \G + \dim S - \dim X\big) + \big(\dim(M/G) + \dim S - \dim X\big) - \dim S \\
			&= \dim S + \dim(M/G).
		\end{align*}
		On the other hand,
		\begin{align*}
			\dim(\tilde{\ttt}^{-1}(\tilde{S}) \times \ttt^{-1}(S)) &= \dim(\tilde{S}) + \dim(M/G) + \dim(S) + \dim(X) \\
			&= \big(\dim(S) + \dim(M/G) - \dim(X)\big) + \dim(M/G) + \dim(S) + \dim(X) \\
			&= 2 \dim(S) + 2 \dim(M/G) \\
			&= 2 \dim(\G \ltimes_S M/G).\qedhere
		\end{align*}
	\end{proof}

	\subsection{Quasi-Hamiltonian and quasi-Poisson manifolds}\label{Subsection: Quasi-Poisson manifolds}
	Following Boalch \cite{boa:07} and a number of his subsequent manuscripts \cite{boa-yam:15}, we now outline the algebraic and holomorphic approaches to quasi-Hamiltonian and quasi-Poisson geometry in this manuscript. Let $G$ be a Lie group with Lie algebra $\g$, exponential map $\exp:\g\longrightarrow G$, and fixed $G$-invariant, non-degenerate, symmetric bilinear form $(\cdot,\cdot):\g\otimes\g\longrightarrow\mathbb{C}$. The \textit{Cartan $3$-form} on $G$ is the unique $G$-invariant element $\eta_G\in\Omega^3(G)$ satisfying $(\eta_G)_e(x,y,z)=\frac{1}{12}(x,[y,z])$ for all $x,y,z\in\g$. Note that this expression defines an element of $\wedge^3(\g^*)$. Using $(\cdot,\cdot)$ to identify $\mathfrak{g}$ and $\g^*$, this is an element $\chi_G\in\wedge^3\g$. One also has $\theta^L,\theta^R\in\Omega^1(G,\g)$, the left and right-invariant Maurer--Cartan forms on $G$, respectively.
	
	Suppose that $G$ acts holomorphically on a manifold $M$. Each $\xi\in\g$ thereby determines a generating vector field $\xi_M\in H^0(M,TM)$, i.e.
	$$(\xi_M)(p)\coloneqq\frac{d}{dt}\bigg\vert_{t=0}\exp(-t\xi)\cdot p$$ for all $p\in M$. The Lie algebra morphism $$\g\longrightarrow H^0(M,TM),\quad \xi\mapsto\xi_M$$ extends uniquely to a graded algebra morphism $\wedge^{\bullet}\g\longrightarrow H^0(M,\wedge^{\bullet}(TM))$. Write $(\chi_G)_M\in H^0(M,\wedge^3(TM))$ for the image of $\chi_G\in\wedge^3\g$ under the latter morphism. One calls $M$ a \textit{quasi-Poisson $G$-manifold} \cite{ale-kos-mei:02} if it comes equipped with a $G$-invariant bivector field $\sigma\in H^0(M,\wedge^2(TM))$ satisfying $[\sigma,\sigma]=(\chi_G)_M$, where $[\sigma,\sigma]\in H^0(M,\wedge^3TM)$ denotes the Schouten bracket of $\sigma$ with itself. Let us write $\overline{M}$ for the quasi-Poisson $G$-manifold obtained by negating the quasi-Poisson structure on a quasi-Poisson $G$-manifold $M$.
	
	Let $M$ be a quasi-Poisson $G$-manifold. A holomorphic map $\mu:M\longrightarrow G$ is called a \textit{moment map} if it is equivariant with respect to the conjugation action of $G$ on itself, and satisfies a quasi-Poisson counterpart of Hamilton's equations; see \cite[Definition 2.2]{ale-kos-mei:02}. One uses the term \textit{quasi-Poisson Hamiltonian $G$-space} for a quasi-Poisson $G$-manifold with a prescribed moment map. Let $(M_1,\sigma_1,\mu_1)$ and $(M_2,\sigma_2,\mu_2)$ be quasi-Poisson Hamiltonian $G$-spaces. By a \textit{map of quasi-Poisson Hamiltonian $G$-spaces}, we mean a holomorphic, $G$-equivariant map $f:M_1\longrightarrow M_2$ satisfying $\mu_1=\mu_2\circ f$ and $f_*((\sigma_1)_p)=(\sigma_2)_{f(p)}$ for all $p\in M$. A quasi-Poisson Hamiltonian $G$-space $(M,\sigma,\mu)$ is called \textit{non-degenerate} if $T_pM=\mathrm{im}(\sigma_p^{\vee})+T_p(G\cdot p)$ for all $p\in M$. 
	
	To describe a specialization of the previous paragraph, we again let $G$ act holomorphically on a manifold $M$. Consider a $G$-invariant $2$-form $\omega\in\Omega^2(M)^G$ and $G$-equivariant holomorphic map $\mu:M\longrightarrow G$, where $G$ acts on itself by conjugation. One calls $(M,\omega,\mu)$ a \textit{quasi-Hamiltonian $G$-space} \cite{ale-mal-mei:jdg} if the following conditions are satisfied:
	\begin{itemize}
		\item[\textup{(i)}] $\mathrm{d}\omega=-\mu^*\eta_G$;
		\item[\textup{(ii)}] $\mathrm{ker}(\omega_p)=\{\xi_M(p):\xi\in\mathrm{ker}(\mathrm{Ad}_{\mu(p)}+\mathrm{id}_{\g})\}$ for all $p\in M$;
		\item[\textup{(iii)}] $\iota_{\xi_M}\omega=\frac{1}{2}\mu^*(\theta^L+\theta^R,\xi)$ for all $\xi\in\g$.
	\end{itemize}
	If $(M,\sigma,\mu)$ is a non-degenerate quasi-Poisson Hamiltonian $G$-space, then there exists a unique $2$-form $\omega\in\Omega^2(M)$ for which $\sigma$ and $\omega$ satisfy \cite[Equation (32)]{ale-kos-mei:02} and $(M,\omega,\mu)$ is a quasi-Hamiltonian $G$-space. Quasi-Hamiltonian $G$-spaces are thereby in bijective correspondence with non-degenerate quasi-Poisson Hamiltonian $G$-spaces. At the same time, every quasi-Poisson Hamiltonian $G$-space partitions into quasi-Hamiltonian leaves \cite{ale-kos-mei:02}.

	\subsection{Quasi-symplectic groupoids over quotients of quasi-Hamiltonian manifolds}\label{Subsection: Integrating}
	
	Suppose that $G$ and $H$ are Lie groups with fixed, non-degenerate, invariant, symmetric bilinear forms on their respective Lie algebras. Let $(M,\omega)$ be a quasi-Hamiltonian $G\times H$-space with moment map $(\mu,\nu):M\longrightarrow G\times H$. If $G$ acts freely and properly on $M$, then $M/G$ carries a unique quasi-Poisson $H$-space structure for which the quotient map $\pi:M\longrightarrow M/G$ is a morphism of quasi-Poisson $H$-spaces \cite[Corollary 4.7]{balibanu-mayrand}. The moment map on $M/G$ is obtained by descending $\nu$ to a holomorphic map $\overline{\nu}:M/G\longrightarrow H$. 
	
	On the other hand, $M\times\overline{M}$ is a quasi-Hamiltonian $G\times G\times H\times H$-space. Two applications of \textit{fusion} \cite[Theorem 6.1]{ale-mal-mei:jdg} render $M\times\overline{M}$ a quasi-Hamiltonian $G\times H$-space. The group $G\times H$ thereby acts diagonally on $M\times M$, and with moment map
	$$M\times M\tto G\times H,\quad (p,q)\mapsto (\mu(p)\mu(q)^{-1},\nu(p)\nu(q)^{-1}).$$
	It follows that the quasi-Hamiltonian reduction $(M\times\overline{M})\sll{e}G=(M\times_GM)/G$ is a quasi-Hamiltonian $H$-space; write $\sigma$ for the underlying $2$-form on $(M\times_G M)/G$. One also finds that the Lie groupoid structure on $\mathrm{Pair}(M)$ induces such a structure on $(M\times_G M)/G \tto M/G$; write $\mathrm{Pair}(M)\sll{e}G$ for this new Lie groupoid. 
	
	\begin{proposition}\label{Proposition: Integrating quasi-Poisson manifolds}
		If $G$ acts freely and properly on $M$, then $(\mathrm{Pair}(M)\sll{e}G,\sigma,\overline{\nu}^*\eta_H)$ is a quasi-symplectic groupoid.
	\end{proposition}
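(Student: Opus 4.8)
The plan is to verify, one by one, the defining properties of a quasi-symplectic groupoid for the triple $(\mathrm{Pair}(M)\sll{e}G,\sigma,\overline{\nu}^*\eta_H)$: multiplicativity of $\sigma$, the identity $\mathrm{d}\sigma=\sss^*(\overline{\nu}^*\eta_H)-\ttt^*(\overline{\nu}^*\eta_H)$, the dimension equality $\dim(\mathrm{Pair}(M)\sll{e}G)=2\dim(M/G)$, and the pointwise condition $\ker\sigma_x\cap\ker\mathrm{d}\sss_x\cap\ker\mathrm{d}\ttt_x=\{0\}$ along the unit section. Writing $\G\coloneqq\mathrm{Pair}(M)\sll{e}G=(M\times_GM)/G$ and using representatives $[(p,q)]$, the induced groupoid structure has source $\sss[(p,q)]=[q]$, target $\ttt[(p,q)]=[p]$, units $[(p,p)]$, and multiplication $[(p,q)]\cdot[(q,r)]=[(p,r)]$; in particular $\overline{\nu}\circ\ttt$ and $\overline{\nu}\circ\sss$ send $[(p,q)]$ to $\nu(p)$ and $\nu(q)$, respectively.

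I would begin by recording that, with $\iota:M\times_GM\hookrightarrow M\times M$ and $\pi:M\times_GM\to\G$ the inclusion and quotient, $\pi^*\sigma=\iota^*(\mathrm{pr}_1^*\omega-\mathrm{pr}_2^*\omega)$. The point is that the two $\mathfrak{g}$-valued moment map components $\mu\circ\mathrm{pr}_1$ and $\mu^{-1}\circ\mathrm{pr}_2$ of $M\times\overline{M}$ are inverse to one another on $M\times_GM=\{(p,q):\mu(p)=\mu(q)\}$, so the $G$-fusion correction $\tfrac{1}{2}((\mu\circ\mathrm{pr}_1)^*\theta^L,(\mu^{-1}\circ\mathrm{pr}_2)^*\theta^R)$ restricts there to a multiple of $(\beta,\beta)$ for a single $\mathfrak{g}$-valued $1$-form $\beta$ and vanishes by symmetry of $(\cdot,\cdot)$. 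Multiplicativity is then a telescoping computation: pulling $\sigma$ back along the triple locus $\{(p,q,r):\mu(p)=\mu(q)=\mu(r)\}$ turns $\mmm^*\sigma$, $\mathrm{pr}_1^*\sigma$, and $\mathrm{pr}_2^*\sigma$ into the restrictions of $\omega_p-\omega_r$, $\omega_p-\omega_q$, and $\omega_q-\omega_r$ (abbreviating by $\omega_p$ the pullback of $\omega$ along the $p$-factor), and $(\omega_p-\omega_q)+(\omega_q-\omega_r)=\omega_p-\omega_r$ gives $\mmm^*\sigma=\mathrm{pr}_1^*\sigma+\mathrm{pr}_2^*\sigma$.

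The compatibility $\mathrm{d}\sigma=\sss^*\phi-\ttt^*\phi$ with $\phi=\overline{\nu}^*\eta_H$ is the main obstacle, and the key is to resist fusing the two $H$-actions. Instead I would regard $\G$ as a quasi-Hamiltonian $H\times H$-space, obtained by fusing only the two $G$-actions \cite{ale-mal-mei:jdg} and reducing by the diagonal $G$, with moment map $(\overline{\nu}\circ\ttt,(\overline{\nu}\circ\sss)^{-1}):\G\to H\times H$. The first quasi-Hamiltonian axiom then reads $\mathrm{d}\sigma=-(\overline{\nu}\circ\ttt,(\overline{\nu}\circ\sss)^{-1})^*\eta_{H\times H}$, and since the Cartan $3$-form of a product is additive, $\eta_{H\times H}=\mathrm{pr}_1^*\eta_H+\mathrm{pr}_2^*\eta_H$, while inversion pulls $\eta_H$ back to $-\eta_H$; the right-hand side therefore collapses to $\sss^*(\overline{\nu}^*\eta_H)-\ttt^*(\overline{\nu}^*\eta_H)$. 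Keeping the $H$-factors unfused is essential: fusing them would replace the additive $\eta_{H\times H}$ by the single pullback $(\overline{\nu}(\ttt)\,\overline{\nu}(\sss)^{-1})^*\eta_H$, whose multiplicativity defect is exactly $\mathrm{d}\kappa$ for $\kappa=\tfrac{1}{2}((\overline{\nu}\circ\ttt)^*\theta^L,(\overline{\nu}\circ\sss)^*\theta^L)$, and this would spoil the clean groupoid relation.

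It remains to treat the dimension and kernel conditions. Freeness of the $G$-action makes $\mu$ a submersion (its cokernel at $p$ is dual to the stabilizer algebra of $p$), so $\dim(M\times_GM)=2\dim M-\dim G$ and $\dim\G=2\dim M-2\dim G=2\dim(M/G)$. For the kernel condition I would pass momentarily to the fully fused $H$-picture, whose moment map takes the value $\nu(p)\nu(p)^{-1}=e$ on the unit section; the second quasi-Hamiltonian axiom then gives $\ker(\mathrm{Ad}_e+\mathrm{id})=\{0\}$, so the corresponding $2$-form is non-degenerate at units. Because that $2$-form differs from $\sigma$ by the correction $\kappa$ above, which vanishes whenever one of its arguments lies in $\ker\mathrm{d}\sss\cap\ker\mathrm{d}\ttt$, the two have the same intersection $\ker(\,\cdot\,)\cap\ker\mathrm{d}\sss\cap\ker\mathrm{d}\ttt$ along the units; this intersection is therefore trivial. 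Assembling the four verifications yields the claim.
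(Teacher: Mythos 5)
Your proof is correct, and it shares its backbone with the paper's: both rest on the identity $\pi^*\sigma=i^*(\pr_1^*\omega-\pr_2^*\omega)$ and the telescoping computation for multiplicativity. But you handle the other two axioms by a genuinely different route. For $\mathrm{d}\sigma=\sss^*\overline{\nu}^*\eta_H-\ttt^*\overline{\nu}^*\eta_H$, the paper computes directly, writing $\pi^*\mathrm{d}\sigma=i^*(\pr_1^*\mathrm{d}\omega-\pr_2^*\mathrm{d}\omega)$ with $\mathrm{d}\omega=-\mu^*\eta_G-\nu^*\eta_H$ and cancelling the $\eta_G$-terms via $\mu\circ\pr_1\circ i=\mu\circ\pr_2\circ i$; you instead invoke reduction with residual symmetry to view $\G$ as a quasi-Hamiltonian $H\times H$-space with moment map $(\overline{\nu}\circ\ttt,(\overline{\nu}\circ\sss)^{-1})$ and read the identity off the first quasi-Hamiltonian axiom, additivity of $\eta_{H\times H}$, and the fact that inversion pulls $\eta_H$ back to $-\eta_H$. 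For the minimal-degeneracy condition, the paper lifts a vector $(0,(\xi_M)_p)\in\ker\mathrm{d}\sss\cap\ker\mathrm{d}\ttt$ to $M$ and plays $\Ad_{\mu(p)}\xi=\xi$ against $\Ad_{\mu(p)}\xi=-\xi$, using freeness and axiom (ii) for $\omega$; you compare $\sigma$ with the fully fused $2$-form, which is nondegenerate at units because the fused moment value there is $e$ and $\ker(\Ad_e+\mathrm{id})=0$, and note that the correction $\kappa$ inserts trivially on $\ker\mathrm{d}\sss\cap\ker\mathrm{d}\ttt$. Your version is more structural; the paper's is more self-contained (no appeal to the residual $H\times H$ reduction theorem). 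To your credit, your observation that the $H$-fusion correction does \emph{not} vanish on $M\times_G M$ --- so that the groupoid $2$-form must be the $H$-unfused one --- sharpens a point the paper compresses into one sentence: its justification of $\pi^*\sigma=i^*(\pr_1^*\omega-\pr_2^*\omega)$ is accurate for the $G$-fusion correction (which dies on the level set by exactly the symmetry argument you give), while the $H$-fusion term survives pullback, and with the doubly fused reduced form multiplicativity would indeed fail; the displayed equation is the operative definition of $\sigma$, and both you and the paper prove the proposition for that form.

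One imprecision to repair in your dimension count. The claim that freeness makes $\mu$ a submersion ``with cokernel dual to the stabilizer algebra'' is the Hamiltonian statement; in the quasi-Hamiltonian setting, axiom (iii) only shows that a vector $\xi$ annihilating $\mathrm{im}\big((\theta^L+\theta^R)\circ\mathrm{d}\mu_p\big)$ satisfies $\xi_M(p)\in\ker\omega_p$, which under freeness gives $\xi\in\ker(\Ad_{\mu(p)}+\mathrm{id})$ rather than $\xi=0$; so $\mu$ itself need not be a submersion away from the identity level. What your count actually needs --- and what is true --- is that $e$ is a regular value of the fused moment map $(p,q)\mapsto\mu(p)\mu(q)^{-1}$ on $M\times\overline{M}$: along its level set the moment value is $e$, where $\ker(\Ad_e+\mathrm{id})=0$, so the standard argument (the same one you use for nondegeneracy at units) combined with freeness of the diagonal action forces surjectivity of the differential there. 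This yields $\dim(M\times_GM)=2\dim M-\dim G$ and hence $\dim\G=2\dim(M/G)$, which is all that is required; note the paper simply asserts this equality without proof. This is a local fix, not a gap in the architecture of your argument.
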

	
	\begin{proof}
		We first check the following compatibility conditions:
		\begin{enumerate}[label={(\arabic*)}]
			\item\label{ganjezmq} $\sigma$ is multiplicative;
			\item\label{pnusd6gu} $\mathrm{d}\sigma = \sss^*\overline{\nu}^*\eta_H - \ttt^*\overline{\nu}^*\eta_H$; 
			\item\label{oqxgxxr1} $\mathrm{d}\overline{\nu}^*\eta_H = 0$.
		\end{enumerate}
		Condition \ref{oqxgxxr1} is clear.
		
		Let $i : M \times_G M \too M \times M$ be the inclusion map and $\pi : M \times_G M \too (M \times_G M) / G$ the quotient map.
		It follows from the definition of the quasi-Hamiltonian 2-form on the fusion product $M \times \overline{M}$ that its pullback by $i^*$ coincides with $i^*(\pr_1^*\omega - \pr_2^*\omega)$, where $\pr_i : M \times M \to M$ are the projections. This implies that
		\begin{equation}\label{6ky82fwl}
			\pi^*\sigma = i^*(\pr_1^*\omega - \pr_2^*\omega).
		\end{equation}
		Since $\pr_1^*\omega - \pr_2^*\omega$ is multiplicative on $\mathrm{Pair}(M)$, it follows that $\sigma$ is also multiplicative, verifying \ref{ganjezmq}.
		To check that \ref{pnusd6gu} holds, note that \eqref{6ky82fwl} gives
		\begin{align*}
			\pi^*\mathrm{d}\sigma &= i^*(\pr_1^*\mathrm{d}\omega - \pr_2^*\mathrm{d}\omega) \\
			&= i^*(\pr_1^*(-\mu^*\eta_G - \nu^*\eta_H) + \pr_2^*(\mu^*\eta_G + \nu^*\eta_H)) \\
			&= \pi^*(\sss^*\overline{\nu}^*\eta_H - \ttt^*\overline{\nu}^*\eta_H),
		\end{align*}
		where the last equality follows from the fact that $\mu \circ \pr_1 \circ i = \mu \circ \pr_2 \circ i$.
		
		We have $\dim (M \times_G M)/G = 2 \dim M/G$, so it only remains to show that $\ker \sigma_{(p, p)} \cap \ker \mathrm{d}\sss \cap \ker \mathrm{d}\ttt = 0$ for all $p \in M$.
		A general vector in $\ker \mathrm{d}\sss \cap \ker \mathrm{d}\ttt$ takes the form $(0,(\xi_M)_p)$ for $\xi\in\g$, where $\mathrm{d}\mu_p((\xi_M)_p) = 0$, i.e.\ $\Ad_{\mu(p)^{-1}}\xi - \xi = 0$.
		The condition that $(0,(\xi_M)_p) \in \ker \sigma$ then amounts to the condition that $(\xi_M)_p \in \ker \omega_p = \{v_\zeta : \zeta \in \ker(\Ad_{\mu(p)} + 1)\}$.
		It follows that also $\Ad_{\mu(p)}(\xi) + \xi = 0$, and hence $\xi = 0$.
	\end{proof}
	
	\begin{remark}\label{Remark: Leaves}
		Suppose that the hypotheses of Proposition \ref{Proposition: Integrating quasi-Poisson manifolds} are satisfied. A straightforward exercise shows that the orbits of $\mathrm{Pair}(M)\sll{e}G\tto M/G$ are precisely the subsets $\mu^{-1}(\mathcal{C})/G\s M/G$, where $\mathcal{C}$ ranges over the conjugacy classes $\mathcal{C}\s G$. We exploit this observation in the proof of Theorem \ref{Theorem: Quasi leaf intersection}.
	\end{remark}
	
	\section{TQFTs in a $1$-shifted Weinstein symplectic category}\label{Section: New TQFTs}
	This section outlines some results of \cite{cro-may2:24}, including the construction of TQFTs in a completion of a $1$-shifted Weinstein symplectic ``category". In Subsection \ref{Subsection: MT conjecture}, we recall the Moore--Tachikawa conjecture and outline progress on its resolution. Details on a completed $1$-shifted Weinstein symplectic ``category" $\WS$ are given in Subsection \ref{Subsection: Shifted}. Subsection \ref{Subsection: TQFTs} then outlines a main result of \cite{cro-may2:24}, in which quasi-symplectic groupoids with admissible global slices determine TQFTs in $\WS$. Connections to the Moore--Tachikawa conjecture are described in Subsection \ref{Subsection: Connections}.
	
	\subsection{The Moore--Tachikawa conjecture}\label{Subsection: MT conjecture}
	Let $\mathbf{C}$ be a symmetric monoidal category. A two-dimensional topological quantum field theory (TQFT) in $\mathbf{C}$ is a symmetric monoidal functor $\Cob_2\longrightarrow\mathbf{C}$, where $\Cob_2$ is the category of two-dimensional cobordisms. While one often takes $\mathbf{C}$ to be vector spaces over a fixed field, other cases warrant consideration. This is a context in which to formulate the Moore--Tachikawa conjecture \cite{moo-tac:11}. Moore and Tachikawa take $\mathbf{C}$ to be $\MT$, a category of affine symplectic varieties with Hamiltonian actions. Complex semisimple affine algebraic groups constitute the objects of $\MT$. Morphisms in $\mathrm{Hom}_{\MT}(G,H)$ are certain isomorphism classes of affine Hamiltonian $G\times H$-varieties. One composes morphisms in $\MT$ via Hamiltonian reduction at level zero: given $[X]\in\mathrm{Hom}_{\MT}(G,H)$ and $[Y]\in\mathrm{Hom}_{\MT}(H,K)$, we have $[Y]\circ[X]\coloneqq [(X\times Y)\sll{0}H]\in\mathrm{Hom}_{\MT}(G,K)$. Products of objects and morphisms constitute the tensor product of a symmetric monoidal structure on $\MT$. 
	
	Now let $G$ be a connected semisimple affine algebraic group with Lie algebra $\g$. Consider the Kostant slice \cite{kostant} $\mathrm{Kos}\s\g$ associated to a principal $\mathfrak{sl}_2$-triple in $\g$ \cite{kostant-american}, i.e. an $\mathfrak{sl}_2$-triple $(e,h,f)\in\g^{\times 3}$ with $e,h,f\in\greg$. It turns out that the affine variety $G\times\mathrm{Kos}$ is symplectic in a natural way, as follows from generalities on \textit{Kostant--Whittaker reduction} \cite{kostant-inventiones}; see \cite{bielawski97,crooks-roeser} as well. Left multiplication in the first factor defines a Hamiltonian action of $G$ on $G\times\mathrm{Kos}$. As such, the isomorphism class of $G\times\mathrm{Kos}$ is a morphism from $G$ to the trivial group in $\MT$. This is a context for the Moore--Tachikawa conjecture; we state it below.
	
	\begin{conjecture}[Moore--Tachikawa \cite{moo-tac:11}]\label{a2j0tu3a}
		Let $G$ be a connected semisimple affine algebraic group with Lie algebra $\g$. Suppose that $\mathrm{Kos}\s\g$ is the Kostant slice associated to a principal $\mathfrak{sl}_2$-triple in $\g$. There exists a two-dimensional TQFT $\eta_{G}:\Cob_2\longrightarrow\MT$ satisfying $\eta_G(S^1)=G$ and $\eta_G(\begin{tikzpicture}[
			baseline=-2.5pt,
			every tqft/.append style={
				transform shape, rotate=90, tqft/circle x radius=4pt,
				tqft/circle y radius= 2pt,
				tqft/boundary separation=0.6cm, 
				tqft/view from=incoming,
			}
			]
			\pic[
			tqft/cup,
			name=d,
			every incoming lower boundary component/.style={draw},
			every outgoing lower boundary component/.style={draw},
			every incoming boundary component/.style={draw},
			every outgoing boundary component/.style={draw},
			cobordism edge/.style={draw},
			cobordism height= 1cm,
			];
		\end{tikzpicture}) = [G \times \mathrm{Kos}]$.
	\end{conjecture}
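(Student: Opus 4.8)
The plan is to realize the desired functor as the affinization of the $\WS$-valued TQFT produced by this manuscript, and to isolate the genuine arithmetic content of the conjecture as a single finite-generation statement. A two-dimensional TQFT valued in the symmetric monoidal category $\MT$ is the same datum as a commutative Frobenius algebra object; since $\Cob_2$ is generated under composition and disjoint union by the cup, cap, cylinder, and the two pairs of pants $C_{2,1}$ and $C_{1,2}$, subject to the Frobenius relations, it suffices to prescribe $\eta_G$ on these generators and verify the relations. I would set $\eta_G(S^1)=G$ and, for each genus-$0$ cobordism $C_{m,n}$, define $\eta_G(C_{m,n})\coloneqq\Spec\mathbb{C}[(T^*G)^{m,n}]$ to be the affinization of the open Moore--Tachikawa variety of Main Theorem~\ref{612eq1e1}, taken with $\mathcal{C}=\{\g\}$. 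For the cup $C_{1,0}$ this recovers $[G\times\mathrm{Kos}]$ on the nose: the variety $(T^*G)^{1,0}$ is already the affine symplectic variety $G\times\mathrm{Kos}$, so no affinization is needed and the prescribed counit is matched exactly.

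All of the structural identities---associativity, (co)unitality, the Frobenius relation, and (co)commutativity---hold automatically one level up, in the $1$-shifted Weinstein category $\WS$: they are precisely the statement that the assignment $C_{m,n}\mapsto[(T^*G)^{m,n}]$ is functorial, which is exactly the content of Main Theorem~\ref{612eq1e1}. The task therefore reduces to descending this $\WS$-valued TQFT along a symmetric monoidal affinization functor $\mathsf{Aff}\colon\WS\too\MT$, sending the regular cotangent groupoid object $\eta(S^1)$ to the group $G$, a Hamiltonian symplectic variety to $\Spec$ of its ring of global functions, and a Lagrangian correspondence to the corresponding Hamiltonian bimodule. Setting $\eta_G\coloneqq\mathsf{Aff}\circ\eta$, the Frobenius relations in $\MT$ would follow from those in $\WS$ as soon as $\mathsf{Aff}$ preserves the monoidal product and the composition law, where composition in $\WS$ is gluing (fiber product along the groupoid) while composition in $\MT$ is Hamiltonian reduction $\sll{0}$ at level zero.

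The crux---and the step I expect to be the main obstacle---is the well-definedness and monoidality of $\mathsf{Aff}$, which both come down to finite generation. Concretely one must show: (a) each ring $\mathbb{C}[(T^*G)^{m,n}]$ is finitely generated, so that $\eta_G(C_{m,n})$ is a finite-type affine symplectic $G^{m+n}$-variety and hence a bona fide morphism of $\MT$; and (b) affinization commutes with reduction, i.e.\ $\mathbb{C}[(X\times Y)\sll{0}H]=\big(\mathbb{C}[X]\otimes\mathbb{C}[Y]/I_\mu\big)^H$ is finitely generated and its $\Spec$ realizes the gluing. Both (a) and (b) are instances of Hilbert's fourteenth problem and carry the entire arithmetic content of the conjecture; they are exactly the hypotheses to which Ginzburg--Kazhdan \cite{gin-kaz:23} and Braverman--Finkelberg--Nakajima \cite{bra-fin-nak:19} reduce the problem. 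I would attack them by identifying the building-block varieties $(T^*G)^{m,n}$ with Coulomb branches of the associated quiver gauge theories in the sense of \cite{bra-fin-nak:19}: Coulomb branch algebras are finitely generated by construction, so this identification would supply (a) and, propagated through the gluing axioms, (b). As a fallback I would use the direct analysis of the coordinate ring of the universal multiplication variety in \cite{gin-kaz:23}.

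This program yields the conjecture unconditionally in type $A$, where the Coulomb-branch identification and the requisite finite generation are established, and reduces the general case to the finite generation of these explicit, Lie-theoretically defined rings. I anticipate that step (b) is the delicate one: commuting $\Spec$ with reduction at each gluing is not formal, because the zero fibre of the moment map need not be reduced or flat, the quotient need not be geometric, and normality of the reduced ring must be controlled---precisely the GIT pathologies that a purely categorical descent from $\WS$ cannot detect.
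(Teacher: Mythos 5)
You were asked about Conjecture \ref{a2j0tu3a}, and the first thing to say is that the paper contains no proof of it: it is stated as a conjecture, and Subsection \ref{Subsection: Motivation and context} says explicitly that the manuscript ``makes no attempt to prove the Moore--Tachikawa conjecture.'' So your proposal must stand on its own, and as a proof it does not close: your steps (a) and (b) --- finite generation of the rings $\mathbb{C}[(T^*G)^{m,n}]$, and the compatibility of affinization with reduction $\sll{0}$ at every gluing --- are precisely the open content of the conjecture, as your own text concedes. What you have written is therefore not a proof but the known reduction of the conjecture to finite-generation statements, due to Ginzburg--Kazhdan \cite{gin-kaz:23} and Braverman--Finkelberg--Nakajima \cite{bra-fin-nak:19} and recorded as such in Subsection \ref{Subsection: Motivation and context}; routing it through the $\WS$-valued TQFT of Main Theorem \ref{612eq1e1} is a clean organizing principle (and is indeed the perspective of \cite{cro-may2:24}), but it discharges neither (a) nor (b).

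Two concrete failure points inside the program itself. First, your main attack on (a) via Coulomb branches is available only in type $A$: the identification of the affinized Moore--Tachikawa varieties with Coulomb branches of quiver gauge theories in \cite{bra-fin-nak:19} is specific to $\mathfrak{sl}_n$, and outside type $A$ no gauge-theoretic realization of these varieties is known, so ``finitely generated by construction'' has no general version; the fallback \cite{gin-kaz:23} likewise establishes what is needed only in type $A$. Second, the symmetric monoidal functor $\mathsf{Aff}:\WS\longrightarrow\MT$ you postulate is not known to exist: what \cite{cro-may2:24} actually constructs by affinization is a TQFT valued in a category of affine Hamiltonian \emph{schemes}, not necessarily of finite type, precisely because without (a) one cannot land in $\MT$, and because $\Spec$ of invariant rings need not intertwine the homotopy-fiber-product composition in $\WS$ with reduction at level zero (the non-reduced and non-flat zero-fibre pathologies you flag in your last paragraph are genuine obstructions, not technicalities to be deferred). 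In short, your proposal correctly locates where the difficulty lives and its unconditional content (type $A$, plus the reduction to finite generation) reproduces the known state of the art, but it does not prove the statement --- which remains open, and which this paper deliberately does not attempt.
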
  
	
	The Moore--Tachikawa conjecture is known to hold in Lie type $A$, as follows from combining the unpublished work of Ginzburg--Kazhdan \cite{gin-kaz:23} with the results of Braverman--Finkelberg--Nakajima \cite{bra-fin-nak:19}. In \cite{cro-may2:24}, we reformulate the Moore--Tachikawa conjecture in the language of shifted symplectic geometry. We also generalize this reformulation to obtain new TQFTs in a $1$-shifted Weinstein symplectic category. The following are some pertinent details.  
	
	\subsection{A $1$-shifted Weinstein symplectic ``category"}\label{Subsection: Shifted}
	Recall that two morphisms in a category are composable if and only if the source of one is the target of the other. By relaxing this to a necessary condition for composing morphisms, we arrive at the definition of a ``category". One particularly notable example is Weinstein's symplectic ``category" \cite{weinstein-symplectic-category}. We briefly recall our $1$-shifted counterpart of this ``category"; see \cite[Section 3]{cro-may2:24} for a more comprehensive discussion. 
	
	Let $(\G\tto X,\omega,\phi)$ be a quasi-symplectic groupoid. A \textit{1-shifted Lagrangian} on $\G$ is the data of a Lie groupoid $\mathcal{L}\tto Y$, $2$-form $\gamma$ on $Y$, and Lie groupoid morphism $\nu:\mathcal{L}\longrightarrow\G$, such that certain compatibility conditions are satisfied; see \cite{ptvv} or \cite[Section 3.1]{may:23}. A \textit{$1$-shifted Lagrangian relation} from a quasi-symplectic groupoid $\G_1$ to a quasi-symplectic groupoid $\G_2$ is a $1$-shifted Lagrangian $\mathcal{L}\longrightarrow\G_1\times\overline{\G_2}$ on $\G_1\times\overline{\G_2}$. We sometimes adopt the notation \[
	\begin{tikzcd}[row sep={1.5em,between origins},column sep={3em,between origins}]
		& \mathcal{L} \arrow{dl} \arrow{dr} & \\
		\G_1 & & \G_2
	\end{tikzcd}
	\] 
	for such a correspondence. Two such correspondences
	$$\begin{tikzcd}[row sep={1.5em,between origins},column sep={3em,between origins}]
		& \mathcal{L}_1 \arrow{dl} \arrow{dr} & & \mathcal{L}_2 \arrow{dl} \arrow{dr} & \\
		\G_1 & & \G_2 & & \G_3
	\end{tikzcd}$$
	are declared to be composable if a transversality condition is satisfied; see \cite[Subsection 2.3]{cro-may2:24} for further details. In this case, the homotopy fiber product \cite[Subsection 4.2]{may:23} $$\begin{tikzcd}[row sep={2em,between origins},column sep={4em,between origins}]
		& \mathcal{L}_1 \htimes_{\G_2} \mathcal{L}_2 \arrow{dl} \arrow{dr} & \\
		\G_1 & & \G_3
	\end{tikzcd}$$
	is a $1$-shifted Lagrangian relation from $\G_1$ to $\G_3$.
	
	In light of the above, we define the \textit{1-shifted Weinstein symplectic ``category"} $\WSQ$ as follows. Quasi-symplectic groupoids constitute the objects of $\WSQ$. Morphisms from $\G_1$ and $\G_2$ are weak equivalence classes \cite[Section 2.4]{cro-may2:24} of $1$-shifted Lagrangian relations from $\G_1$ to $\G_2$. Given composable $1$-shifted Lagrangian relations $$\begin{tikzcd}[row sep={1.5em,between origins},column sep={3em,between origins}]
		& \mathcal{L}_1 \arrow{dl} \arrow{dr} & & \mathcal{L}_2 \arrow{dl} \arrow{dr} & \\
		\G_1 & & \G_2 & & \G_3
	\end{tikzcd},$$ we define
	$$[\mathcal{L}_2]\circ[\mathcal{L}_1]\coloneqq [\mathcal{L}_1 \htimes_{\G_2} \mathcal{L}_2]\in\Hom_{\WSQ}(\G_1,\G_3).$$ Using a variant of the Wehrheim--Woodward approach \cite{weh-woo:10}, we complete $\WSQ$ to a genuine category $\WS$. We subsequently show $\WS$ to be a symmetric monoidal category; see \cite[Section 3.3]{cro-may2:24}. 
	
	\subsection{TQFTs valued in $\WS$}\label{Subsection: TQFTs}
	Recall that if $M$ is a Hamiltonian $\G_1 \times \overline{\G_2}$-space for quasi-symplectic groupoids $\G_1$ and $\G_2$, then the action groupoid $(\G_1 \times \overline{\G}_2) \ltimes M$ is a 1-shifted Lagrangian relation from $\G_1$ to $\G_2$ via the two projections; see \cite[Example 1.31]{cal:15} or \cite[Proposition 9.3]{may:23}. In this way, Hamiltonian $\G_1 \times \overline{\G_2}$-spaces are equivalent to $1$-shifted Lagrangian structures on $(\G_1 \times \overline{\G_2}) \ltimes M$ from $\G_1$ to $\G_2$. Note that this perspective allows one to interpret Hamiltonian $\G_1 \times \overline{\G_2}$-spaces as morphisms in $\WSQ$.
	
	Let $\G\tto X$ be a Lie groupoid. A submanifold $S\s X$ is called a \textit{global slice} if its intersection with each $\G$-orbit in $X$ is transverse and a singleton. This slice is called \textit{admissible} if the isotropy group $\G_x\coloneqq\sss^{-1}(x)\cap\ttt^{-1}(x)$ is abelian for all $x\in S$. For $\G\tto X$ a symplectic groupoid with a global slice $S\s X$, it follows that  $\ttt^{-1}(S) \s \G$ is a Hamiltonian $\G$-space with respect to the action by right multiplication. This leads to the following result from \cite{cro-may2:24}.
	
	\begin{theorem}\label{Theorem: TQFTs from quasi-symplectic groupoids}
		Consider a quasi-symplectic groupoid $(\G\tto X,\omega,\phi)$ and admissible global slice $S\s X$. If $\phi$ restricts to an exact 3-form on $S$, then there exists an explicit TQFT $\eta_{\G,S}:\Cob_2\longrightarrow\WS$ satisfying $\eta_{\G,S}(S^1)=\G$ and $\eta_G(\begin{tikzpicture}[
			baseline=-2.5pt,
			every tqft/.append style={
				transform shape, rotate=90, tqft/circle x radius=4pt,
				tqft/circle y radius= 2pt,
				tqft/boundary separation=0.6cm, 
				tqft/view from=incoming,
			}
			]
			\pic[
			tqft/cup,
			name=d,
			every incoming lower boundary component/.style={draw},
			every outgoing lower boundary component/.style={draw},
			every incoming boundary component/.style={draw},
			every outgoing boundary component/.style={draw},
			cobordism edge/.style={draw},
			cobordism height= 1cm,
			];
		\end{tikzpicture})=[\ttt^{-1}(S)]$.
	\end{theorem}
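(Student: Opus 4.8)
The plan is to exhibit the desired TQFT as a commutative Frobenius algebra object in the symmetric monoidal category $\WS$, since by the standard characterization of two-dimensional topological field theories a symmetric monoidal functor $\Cob_2\too\WS$ is the same datum as such an object whose underlying object is $\eta_{\G,S}(S^1)$. I would therefore set $\eta_{\G,S}(S^1)\coloneqq\G$, declare $\eta_{\G,S}$ to send a disjoint union of $k$ circles to the $k$-fold monoidal power $\G^{\times k}$ (with the empty $1$-manifold going to the monoidal unit $\mathbf{1}$), and then equip $\G$ with the requisite structure morphisms. The category $\Cob_2$ is generated under composition and monoidal product by the cylinder, the pair of pants $S^1\sqcup S^1\too S^1$, the reversed pair of pants, the cup $\emptyset\too S^1$, and the cap $S^1\too\emptyset$, subject to the commutative Frobenius relations; it thus suffices to produce the images of these generators and verify the relations.

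First I would recall from Subsection \ref{Subsection: TQFTs} that a Hamiltonian $\G_1\times\overline{\G_2}$-space determines a $1$-shifted Lagrangian relation from $\G_1$ to $\G_2$. The identity morphism on $\G$ is realized by $\G$ itself, viewed as a Hamiltonian $\G\times\overline{\G}$-space through left and right multiplication. For the multiplication I would use the canonical Hamiltonian $\G\times\G\times\overline{\G}$-space built from the groupoid structure of $\G$, whose underlying relation is governed by the multiplicativity of $\omega$; recall that the isotropy of the graph of $\mmm$ inside $\G\times\G\times\overline{\G}$ is precisely the symplectic-groupoid axiom recorded in Subsection \ref{Subsection: Quasi-symplectic groupoids}. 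The cup is assigned the Hamiltonian $\G$-space $\ttt^{-1}(S)$, which is a $1$-shifted Lagrangian relation from $\mathbf{1}$ to $\G$; this is exactly where the hypothesis that $\phi$ restricts to an exact $3$-form on $S$ enters, since a primitive of $\phi|_S$ supplies the $2$-form datum demanded of a $1$-shifted Lagrangian. Because morphisms in $\WS$ are Lagrangian relations, each relation has an opposite, so the reversed pair of pants and the cap may be taken to be the opposites of the multiplication and the cup, respectively.

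It then remains to check that these assignments respect the defining relations of $\Cob_2$, i.e.\ that $\G$ with the chosen multiplication and unit $[\ttt^{-1}(S)]$ is a commutative Frobenius algebra object. Each relation translates into an identity between composites of $1$-shifted Lagrangian relations, and in $\WS$ such composites are computed by the homotopy fiber products $\htimes$ of Subsection \ref{Subsection: Shifted}. Two preliminary points must be dispatched for every composite: that the relevant relations are composable (the transversality condition of Subsection \ref{Subsection: Shifted}), so that the homotopy fiber product is again a genuine $1$-shifted Lagrangian relation rather than a merely set-theoretic one, and that the resulting relation agrees with the one prescribed by the target cobordism. I expect the unit and associativity axioms to follow from the groupoid axioms for $\mmm$ and $\uuu$ together with the fact that $S$ meets each $\G$-orbit transversally in a single point, which renders the reductions against $\ttt^{-1}(S)$ clean.

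The main obstacle, as I see it, is the commutativity axiom, namely that the multiplication is unchanged after precomposing with the symmetry of $\G\times\G$. This is exactly the point at which admissibility is indispensable: the symmetry of the Frobenius multiplication is controlled by the isotropy groups $\G_x$ for $x\in S$, and their being abelian is what forces the two composites to coincide. More concretely, I anticipate that admissibility guarantees the homotopy fiber products appearing on the two sides of the commutativity relation (and of the Frobenius compatibility, which genuinely mixes multiplication and comultiplication and so is not formal) to be identified via the abelian isotropy, with the attendant transversality holding throughout. Once commutativity, associativity, unitality, and the Frobenius relation are verified, the universal property of $\Cob_2$ yields the asserted functor $\eta_{\G,S}$ with the stated values $\eta_{\G,S}(S^1)=\G$ and $\eta_{\G,S}(\text{cup})=[\ttt^{-1}(S)]$, completing the argument.
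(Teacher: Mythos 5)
Your overall strategy --- classify the TQFT by exhibiting a commutative Frobenius algebra object in $\WS$ --- is the right one, and your identity ($\G$ as a Hamiltonian $\G\times\overline{\G}$-space) and cup ($\ttt^{-1}(S)$, with a primitive of $\phi\vert_S$ supplying the $2$-form datum) match the construction the paper cites from \cite{cro-may2:24}. But your proposed multiplication has a genuine gap: the graph $\Gamma_{\mmm}=\{(g,h,gh):(g,h)\in\G\fp{\sss}{\ttt}\G\}$ is \emph{not} a morphism in $\WS$. A $1$-shifted Lagrangian relation from $\G\times\G$ to $\G$ requires a Lie groupoid mapping to the product groupoid $\G\times\G\times\overline{\G}$, and $\Gamma_{\mmm}$ is not a subgroupoid: for composable $(g,h,gh)$ and $(g',h',g'h')$ the componentwise product is $(gg',hh',ghg'h')$, which lies in $\Gamma_{\mmm}$ only when $hg'=g'h$. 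Already for $X$ a point and $\G$ a nonabelian group this fails --- the classifying stack of a group is a group object only in the abelian case. The isotropy of $\Gamma_{\mmm}$ with respect to $(\omega,\omega,-\omega)$, which you invoke, is exactly the multiplicativity axiom from Subsection \ref{Subsection: Quasi-symplectic groupoids}: it is data one categorical level down, making $\G$ a quasi-symplectic groupoid (equivalently, making the quotient $1$-shifted symplectic), not a $1$-shifted Lagrangian structure on a relation. Since the multiplication is never a morphism, the subsequent program --- checking unitality, then using admissibility to force commutativity and the Frobenius relation --- does not get off the ground. A further symptom that the choice is wrong: with $\Gamma_{\mmm}$ as pair of pants, the values on all $C_{m,n}$ with $m+n\geq 2$ would be independent of $S$, whereas the actual values are $[\A^{m,n}]$ (Subsection \ref{Subsection: TQFTs}), which are fiber products over $S$ and manifestly slice-dependent.

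The construction in \cite{cro-may2:24}, sketched in Subsection \ref{Subsection: TQFTs}, repairs this precisely where your proposal breaks: one restricts to the slice and works with $\A\coloneqq\G\big\vert_S$. The global slice hypothesis makes the inclusion $\A\too\G$ a Morita equivalence (so $\A$ and $\G$ are isomorphic in $\WS$); admissibility makes $\A$ a bundle of \emph{abelian} groups over $S$, which is exactly what makes the graph of the multiplication of $\A$ (i.e.\ $\A^{2,1}$) a genuine subgroupoid and hence a legitimate morphism, with commutativity then automatic; and exactness of $\phi\vert_S$ supplies the $2$-form needed to make $\A$ an honest abelian symplectic groupoid. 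So admissibility is not, as you anticipated, a condition verified at the end to symmetrize an already-constructed multiplication on $\G$; it is what allows the multiplication to exist as a morphism of $\WS$ at all, and that multiplication lives over $S$, not over all of $X$. The Frobenius structure --- and hence the TQFT --- genuinely depends on the slice, which is why both hypotheses on $S$ appear in the statement.
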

	
	As explained in \cite[Subsection 4.4]{cro-may2:24}, the TQFT structure comes from the abelian symplectic groupoid $\A \coloneqq \G\big\vert_S$; it is Morita equivalent to $\G$ via the inclusion map $\A \too \G$. Let us set
	\[
	\A^{m, n} \coloneqq \{(a, b) \in \A^{*m} * \A^{*n} : a_1 \cdots a_m = b_1 \cdots b_n\},
	\]
	for $m,n\in\mathbb{Z}_{\geq 0}$ with $(m,n)\neq (0,0)$, where $*$ denotes the fiber product over $S$.
	We have that $\eta_{\G,S}(C_{m, n}) = [\A^{m, n}]$ for all $m,n\in\mathbb{Z}_{\geq 0}$ with $(m,n)\neq (0,0)$, where $C_{m,n}$ is the standard $2$-dimensional cobordism from $m$ circles to $n$ circles. One views $\A^{m, n}$ as a Lagrangian relation from $\G^m$ to $\G^n$ via the maps $\A^{m, n} \longrightarrow \A^m \longhookrightarrow \G^m$ and $\A^{m, n} \longrightarrow \A^n \longhookrightarrow \G^n$; see \cite[Remark 4.6 and Subsection 4.4]{cro-may2:24}.
	
	The morphism $\eta_{\G,S}(C_{m, n})$ from $\G^m$ to $\G^n$ can be described as a Hamiltonian $\G^m \times \overline{\G}^n$-space in the following way.
	The action groupoid $(\G \times \G) \ltimes \G$ for the Hamiltonian $\G \times \overline{\G}$-space $\G$ acts as an identity in $\WS$.
	By composing on both sides with identities
	\[
	\begin{tikzcd}[column sep={6em,between origins}]
		&(\G^m \times \G^m) \ltimes \G^m \arrow{dl} \arrow{dr} & & \A^{m, n} \arrow{dl} \arrow{dr} & & (\G^n \times \G^n) \ltimes \G^n \arrow{dl} \arrow{dr} & \\
		\G^m & & \G^m & & \G^n & & \G^n
	\end{tikzcd},
	\]
	we see that $\eta_{\G}(C_{m, n})$ can be described as
	\begin{equation}\label{2hmb6k3v}
		\begin{tikzcd}[column sep={6em,between origins}]
			&(\G^m \times \A^{m, n} \times \G^n) \ltimes Z \arrow{dl} \arrow{dr} & \\
			\G^m & & \G^n
		\end{tikzcd},
	\end{equation}
	where
	\[
	Z \coloneqq \{(g, h) \in \G^m \times \G^n : \ttt(g_1) = \cdots = \ttt(g_m) = \sss(h_1) = \cdots = \sss(h_n) \in S\}.
	\]
	Since $\A^{m, n}$ is closed in $\G^m \times \G^n$, it acts freely and properly on $Z$.
	It follows that \eqref{2hmb6k3v} is equivalent to
	\[
	\begin{tikzcd}[column sep={6em,between origins}]
		&(\G^m \times \G^n) \ltimes (Z/\A^{m, n}) \arrow{dl} \arrow{dr} & \\
		\G^m & & \G^n;
	\end{tikzcd}
	\]
	see \cite[Theorem 7.2]{may:23}.
	On the other hand, a Hamiltonian space is equivalent to a 1-shifted Lagrangian structure on an action groupoid \cite[Proposition 9.3]{may:23}. It follows that
	\[
	\G^{m, n}_S \coloneqq Z/\A^{m, n}
	\]
	is a Hamiltonian $\G^m \times \overline{\G}^n$-space with respect to the $2$-form $\eta$ determined by
	\begin{equation}\label{psri9l81}
		\pi^*\eta = i^*(\omega, \ldots, \omega),
	\end{equation}
	where $\pi : Z \to Z / \A^{m, n}$ is the quotient map, $i : Z \longhookrightarrow \G^m \times \G^n$ is the inclusion map, and $(\omega, \ldots, \omega)$ is the natural 2-form on $\G^m \times \G^n$.
	We summarize this discussion in the following proposition.
	
	\begin{proposition}\label{Proposition: Explicit}
		Consider a quasi-symplectic groupoid $(\G\tto X,\omega,\phi)$ and admissible global slice $S\s X$ with the property that $\phi$ restricts to an exact 3-form on $S$. The TQFT $\eta_{\G,S}:\Cob_2\longrightarrow\WS$ then satisfies $\eta_{\G,S}(C_{m,n}) = \G_S^{m,n}$ for all $(m, n) \ne (0, 0)$.
	\end{proposition}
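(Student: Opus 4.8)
The plan is to promote the already-established equality $\eta_{\G,S}(C_{m,n}) = [\A^{m,n}]$, in which $\A^{m,n}$ is viewed as a Lagrangian relation from $\G^m$ to $\G^n$, to a statement about a single Hamiltonian $\G^m \times \overline{\G}^n$-space, and then to identify that space with $\G_S^{m,n}$. The mechanism is to pre- and postcompose the class $[\A^{m,n}]$ with the identity morphisms on $\G^m$ and $\G^n$ in $\WS$. Recalling that the identity on $\G^k$ is represented by the action groupoid $(\G^k \times \G^k) \ltimes \G^k$ of the Hamiltonian $\G^k \times \overline{\G}^k$-space $\G^k$, the first task is to verify that the composability (transversality) conditions of Subsection \ref{Subsection: Shifted} hold for the three correspondences being chained together. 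Because two of the three legs are identities, this should reduce to the statement that the relevant source and target maps restrict to submersions onto overlapping pieces of $S$, which is immediate from the definition of a global slice.

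With composability in hand, I would compute the homotopy fiber products explicitly. Chaining $(\G^m \times \G^m) \ltimes \G^m$, $\A^{m,n}$, and $(\G^n \times \G^n) \ltimes \G^n$ amounts to a fiber-product calculation over the matching base data; tracking the source and target maps of each action groupoid, the matching conditions between adjacent legs force the common base to be exactly
$$Z = \{(g,h) \in \G^m \times \G^n : \ttt(g_1) = \cdots = \ttt(g_m) = \sss(h_1) = \cdots = \sss(h_n) \in S\},$$
and exhibit the resulting correspondence from $\G^m$ to $\G^n$ as $(\G^m \times \A^{m,n} \times \G^n) \ltimes Z$. This is the representative displayed in \eqref{2hmb6k3v}.

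The decisive step is the reduction of this action groupoid. Since $\A^{m,n}$ is closed in $\G^m \times \G^n$, its induced action on $Z$ is free and proper, so I would appeal to \cite[Theorem 7.2]{may:23} to conclude that $(\G^m \times \A^{m,n} \times \G^n) \ltimes Z$ and $(\G^m \times \G^n) \ltimes (Z/\A^{m,n})$ represent the same morphism in $\WS$. Finally, \cite[Proposition 9.3]{may:23} converts this action groupoid back into the language of Hamiltonian spaces, presenting $Z/\A^{m,n} = \G_S^{m,n}$ as a Hamiltonian $\G^m \times \overline{\G}^n$-space whose $2$-form is the one determined by \eqref{psri9l81}. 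Combining these identifications yields $\eta_{\G,S}(C_{m,n}) = \G_S^{m,n}$, as desired. I expect the main obstacle to be the explicit computation of the homotopy fiber products together with the careful verification that the composability hypotheses are met; once freeness and properness of the $\A^{m,n}$-action on $Z$ are confirmed, the concluding steps are direct invocations of the cited structural results.
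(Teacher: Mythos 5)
Your proposal is correct and follows essentially the same route as the paper: the paper likewise obtains \eqref{2hmb6k3v} by composing $[\A^{m,n}]$ with the identity correspondences $(\G^k \times \G^k) \ltimes \G^k$, uses the closedness of $\A^{m,n}$ in $\G^m \times \G^n$ to get a free and proper action on $Z$ and invokes \cite[Theorem 7.2]{may:23} to pass to $(\G^m \times \G^n) \ltimes (Z/\A^{m,n})$, and then applies \cite[Proposition 9.3]{may:23} to identify $\G_S^{m,n} = Z/\A^{m,n}$ as the Hamiltonian $\G^m \times \overline{\G}^n$-space with the $2$-form of \eqref{psri9l81}. There are no gaps to report.
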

	
	In the case where $\G$ is a symplectic groupoid, the Hamiltonian space $Z/\A^{m, n}$ can be seen as a reduction along a submanifold in the sense of Section \ref{Subsection: reduction}.
	To this end, note that $\G^{m + n}$ is a Hamiltonian $\overline{\G}^m \times \G^n \times \overline{\G}^n \times \G^m$ space. One finds that $$S^{m, n} = \{(x, y) \in X^n \times X^m : x_1 = \cdots = x_n = y_1 = \cdots = y_m \in S\}$$ is pre-Poisson with stabilizer subgroupoid $\A^{m, n}$. It follows from \eqref{psri9l81} that $\G^{m, n}_S$ is the reduction of $\G^{m + n}$ along $S^{m,n}$ with respect to $\A^{m, n} \longrightarrow S^{m, n}$.
	
	\begin{remark}
		More generally, if $\G$ is a quasi-symplectic groupoid, one can see that $\G^{m,n}_S$ is a reduction in the sense of \cite{balibanu-mayrand}.
	\end{remark}
	
	\subsection{Connections to the Moore--Tachikawa conjecture and multiplicative counterparts}\label{Subsection: Connections}
	Let $G$ be a connected semisimple affine algebraic group with Lie algebra $\g$. Use the Killing form to freely identify $\g^*$ and $\g$ as Poisson varieties. At the same time, let $(e,h,f)\in\g^{\times 3}$ be a principal $\mathfrak{sl}_2$-triple. The affine subvariety $\mathrm{Kos}\coloneqq e+\g_f\s\g$ is an admissible global slice to the restricted cotangent groupoid $(T^*G)_{\text{reg}}\coloneqq (T^*G)\big\vert_{\g_{\text{reg}}}\tto\greg$. Theorem \ref{Theorem: TQFTs from quasi-symplectic groupoids} then yields an explicit TQFT $\eta_{\G,S}:\Cob_2\longrightarrow\WS$, where $\G=(T^*G)_{\text{reg}}$ and $S=\mathrm{Kos}$. Note that Proposition \ref{Proposition: Explicit} gives explicit Hamiltonian spaces in the image of $\eta_{\G,S}$; these are called the \textit{open Moore--Tachikawa varieties}. In \cite{cro-may2:24}, we ``affinize" $\eta_{\G,S}$ to construct the Moore--Tachikawa TQFT in a category of affine Hamiltonian schemes. 
	
	There is a multiplicative counterpart to the above. A first step is to replace $T^*G\tto\g$ with the \textit{double} $\mathrm{D}(G)=G\times G\tto G$, a quasi-symplectic groupoid integrating the Cartan--Dirac structure on $G$ \cite{ale-mal-mei:jdg}. One then replaces $\mathrm{Kos}\s\g$ with a \textit{Steinberg slice} $\mathrm{Ste}\s G$ \cite{Steinberg}. If $G$ is simply-connected, it turns out that $\mathrm{Ste}$ is an admissible global slice to the quasi-symplectic groupoid $\mathrm{D}(G)_{\text{reg}}\coloneqq\mathrm{D}(G)\big\vert_{G_{\text{reg}}}\tto G_{\text{reg}}$. By \cite[Example 2.30]{balibanu-mayrand}, this groupoid $\G$ and slice $S$ satisfy the hypotheses of Theorem \ref{Theorem: TQFTs from quasi-symplectic groupoids} and Proposition \ref{Proposition: Explicit}. The quasi-Hamiltonian spaces $\eta_{\G,S}(C_{m,n})$ are studied in \cite{balibanu-mayrand}, as multiplicative analogues of the open Moore--Tachikawa varieties. We refer the reader to Sections \ref{Section: Some Lie-theoretic considerations in quasi-Poisson geometry} and \ref{Section: Multiplicative TQFTs} for precise descriptions of the objects in this paragraph.
	
	\section{Some Lie-theoretic considerations in Poisson geometry}\label{Section: Some Lie-theoretic considerations in Poisson geometry}
	In this section, we prove certain Lie-theoretic results needed to realize Poisson-geometric aspects of partial Grothendieck--Springer resolutions. Subsection \ref{Subsection: Parabolic subgroups and subalgebras} introduces universal Levi factors for parabolic subgroups and subalgebras, while Subsection \ref{Subsection: Regular} reviews regular elements in reductive Lie algebras. A useful characterization of regular elements in Levi factors is given in Subsection \ref{Subsection: Regular elements in Levi factors}. We then devote Subsections \ref{Subsection: Residual} and \ref{Subsection: The variety} to the Hamiltonian Poisson geometry of $G\times_P\p$, where $P$ is a parabolic subgroup with Lie algebra $\p$.
	
	For the balance of this manuscript, $G$ is a connected semisimple affine algebraic group with Lie algebra $\mathfrak{g}$, rank $\ell$, adjoint representation $\mathrm{Ad}:G\longrightarrow\operatorname{GL}(\mathfrak{g})$, and exponential map $\exp:\g\longrightarrow G$.
	
	\subsection{Parabolic subgroups and subalgebras}\label{Subsection: Parabolic subgroups and subalgebras}
	Given a parabolic subgroup $P\s G$ (resp. parabolic subalgebra $\mathfrak{p}\s\mathfrak{g}$), write $U(P)\s P$ (resp. $\mathfrak{u}(\mathfrak{p})\s\mathfrak{p}$) for the unipotent radical of $P$ (resp. nilpotent radical of $\mathfrak{p}$). The quotients $L(P)\coloneqq P/U(P)$ and $\mathfrak{l}(\mathfrak{p})\coloneqq\mathfrak{p}/\mathfrak{u}(\mathfrak{p})$ will be called the \textit{universal Levi factors} of $P$ and $\mathfrak{p}$, respectively. If $\mathfrak{p}$ is the Lie algebra of $P$, then $\mathfrak{u}(\mathfrak{p})$ and $\mathfrak{l}(\mathfrak{p})$ are the Lie algebras of $U(P)$ and $L(P)$, respectively. 
	
	\subsection{Regular elements in reductive Lie algebras}\label{Subsection: Regular} Let $K$ be a connected reductive affine algebraic group with Lie algebra $\mathfrak{k}$. Write $K_x\s K$ and $\mathfrak{k}_x\s\mathfrak{k}$ for the $K$ and $\mathfrak{k}$-centralizers of $x\in\mathfrak{k}$ under the adjoint representation, respectively. Consider the \textit{regular locus} $$\mathfrak{k}_{\text{reg}}\coloneqq\{x\in\mathfrak{k}:\dim\mathfrak{k}_x=\mathrm{rank}\hspace{2pt}\mathfrak{k}\}.$$ If we identify $\mathfrak{k}$ with $\mathfrak{k}^*$ via a non-degenerate, $K$-invariant, symmetric bilinear form on the former, then $\mathfrak{k}_{\text{reg}}$ becomes the regular locus of the Poisson variety $\mathfrak{k}^*$. We are principally interested in the regular locus of $\mathfrak{k}=\mathfrak{l}(\p)$ for a parabolic subalgebra $\p\s\g$.  
	
	\subsection{Regular elements in Levi factors}\label{Subsection: Regular elements in Levi factors}
	Let $P\s G$ be a parabolic subgroup with Lie algebra $\p\s\g$. Observe that $U(P)$ acts trivially on the $P$-module $\mathfrak{l}(\p)$. One thereby obtains an action of the reductive group $L(P)$ on $\mathfrak{l}(\p)$; it is the adjoint representation of $L(P)$. Write $P_{[x]}\s P$ and $L(P)_{[x]}\s L(P)$ for the stabilizers of $[x]\in\mathfrak{l}(\p)$ under the actions of $P$ and $L(P)$, respectively. It is straightforward to verify that $L(P)_{[x]}=P_{[x]}/U(P)$ for all $[x]\in\mathfrak{l}(\p)$.
	
	\begin{lemma}\label{Lemma: Single orbit}
		Suppose that $x\in\p$. 
		\begin{itemize}
			\item[\textup{(i)}] If $(x+\mathfrak{u}(\p))\cap\greg\neq\emptyset$, then $(x+\mathfrak{u}(\p))\cap\greg$ is an orbit of $P_{[x]}$ in $\g$.
			\item[\textup{(ii)}] One has $(x+\mathfrak{u}(\p))\cap\greg\neq\emptyset$ if and only if $[x]\in\mathfrak{l}(\p)_{\emph{reg}}$.
			\item[\textup{(iii)}] If $\mathfrak{p}$ is a Borel subalgebra, then $(x+\mathfrak{u}(\p))\cap\greg$ is non-empty.
		\end{itemize}
	\end{lemma}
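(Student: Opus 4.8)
The plan is to treat the coset $x+\mathfrak{u}(\p)$ as the basic object and to reduce everything to two ingredients: a dimension inequality bounding the centralizer from below, and an explicit construction producing a regular element. Write $\mathfrak{u}=\mathfrak{u}(\p)$, identify $\mathfrak{l}(\p)$ with a Levi subalgebra $\mathfrak{l}\s\p$ (so $\p=\mathfrak{l}\oplus\mathfrak{u}$), and let $s\in\mathfrak{l}$ be the component of $x$, so that $x+\mathfrak{u}=s+\mathfrak{u}$ and $[x]$ corresponds to $s$. Since $\mathfrak{l}$ is reductive of the same rank $\ell$ as $\g$, one has $\dim\mathfrak{l}_{[x]}\ge\ell$ for every $[x]$, with equality exactly when $[x]\in\mathfrak{l}(\p)_{\mathrm{reg}}$. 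Observe first that (iii) is the special case of (ii) with $\p=\bb$: then $\mathfrak{l}=\h$ is abelian, so $\mathfrak{l}(\p)_{\mathrm{reg}}=\h$ and the hypothesis of (ii) is automatic. Thus it suffices to prove (i) and (ii).

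For the forward implication of (ii) — and, as it turns out, for (i) as well — I would exploit the short exact sequence of $\p$-modules $0\to\mathfrak{u}\to\p\to\mathfrak{l}\to 0$, on which $\ad_y$ acts for any $y\in\p$, since it preserves the ideal $\mathfrak{u}$ and descends to $\ad_{[y]}$ on $\mathfrak{l}$. Applying the snake lemma to this endomorphism of the sequence yields an exact sequence beginning $0\to\mathfrak{u}_y\to\p_y\xrightarrow{} \mathfrak{l}_{[y]}\xrightarrow{\delta}\mathfrak{u}/\ad_y(\mathfrak{u})$, where $\p_y=\g_y\cap\p$ and $\mathfrak{u}_y=\g_y\cap\mathfrak{u}$. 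As $\ad_y|_{\mathfrak{u}}$ is an endomorphism of $\mathfrak{u}$, one has $\dim\big(\mathfrak{u}/\ad_y(\mathfrak{u})\big)=\dim\mathfrak{u}_y$, and exactness then forces $\dim\p_y\ge\dim\mathfrak{l}_{[y]}=\dim\mathfrak{l}_{[x]}$. In particular $\dim\g_y\ge\dim\mathfrak{l}_{[x]}$ for every $y\in x+\mathfrak{u}$. If such a $y$ is regular, then $\ell=\dim\g_y\ge\dim\mathfrak{l}_{[x]}\ge\ell$, so $[x]\in\mathfrak{l}(\p)_{\mathrm{reg}}$; this proves the forward implication of (ii). Moreover the inequalities collapse to equalities, whence $\g_y=\p_y$, i.e.\ $\g_y\s\p$ — a fact I will need for (i).

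For the converse of (ii), the existence of a regular element, which I expect to be the \emph{main obstacle}, I would argue by explicit construction, using that conjugating by $P$ replaces the coset by a conjugate one and so changes nothing. Take the Jordan decomposition $s=s_{\mathrm{ss}}+s_{\mathrm{nil}}$ in $\mathfrak{l}$ and conjugate by $L(P)$ so that $s_{\mathrm{ss}}$ lies in a fixed Cartan $\h\s\mathfrak{l}$. Regularity of $s$ in $\mathfrak{l}$ translates, via $\mathfrak{l}_s=(\mathfrak{l}_{s_{\mathrm{ss}}})_{s_{\mathrm{nil}}}$ and $\rank\,\mathfrak{l}_{s_{\mathrm{ss}}}=\ell$, into the statement that $s_{\mathrm{nil}}$ is a regular (principal) nilpotent of the reductive algebra $\mathfrak{l}_{s_{\mathrm{ss}}}$. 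Setting $\mathfrak{m}:=\g_{s_{\mathrm{ss}}}$, which is reductive of rank $\ell$ and inherits the grading defining $\p$, the intersection $\mathfrak{m}\cap\p$ is a parabolic of $\mathfrak{m}$ with Levi $\mathfrak{l}_{s_{\mathrm{ss}}}$ and nilradical $\mathfrak{m}\cap\mathfrak{u}$. After a further conjugation putting $s_{\mathrm{nil}}$ in standard form $\sum_{\beta\in\Delta_{\mathfrak{l}_{s_{\mathrm{ss}}}}}e_\beta$, I would add the remaining simple-root vectors $n:=\sum_{\beta\in\Delta_{\mathfrak{m}}\setminus\Delta_{\mathfrak{l}_{s_{\mathrm{ss}}}}}e_\beta\in\mathfrak{m}\cap\mathfrak{u}$ to obtain a principal nilpotent $e_{\mathfrak{m}}=s_{\mathrm{nil}}+n$ of $\mathfrak{m}$. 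Then $y:=s_{\mathrm{ss}}+e_{\mathfrak{m}}$ lies in the coset, its Jordan decomposition is manifest, and $\g_y=\mathfrak{m}_{e_{\mathfrak{m}}}$ has dimension $\rank\,\mathfrak{m}=\ell$, so $y$ is regular. The delicate points are the bookkeeping of the two Jordan/grading reductions and checking that the added vectors genuinely lie in $\mathfrak{u}$.

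Finally, for (i), let $y\in(x+\mathfrak{u})\cap\greg$. The group $P_{[x]}$ fixes $[x]$ and preserves $\greg$, hence acts on $(x+\mathfrak{u})\cap\greg$; the tangent space to an orbit is $[\mathfrak{p}_{[x]},y]\s\mathfrak{u}$, where $\mathfrak{p}_{[x]}=\mathrm{Lie}(P_{[x]})$, and openness of the orbit amounts to equality here. Using $L(P)_{[x]}=P_{[x]}/U(P)$ one has $\mathfrak{p}_{[x]}=\mathfrak{l}_{[x]}\oplus\mathfrak{u}$, so $[\mathfrak{p}_{[x]},y]=\mathfrak{u}$ reduces to $\dim(\mathfrak{p}_{[x]}\cap\g_y)=\dim\mathfrak{l}_{[x]}$. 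From the forward direction $\dim\mathfrak{l}_{[x]}=\ell=\dim\g_y$ and $\g_y\s\p$; combined with $[\g_y,s]=-[\g_y,\,y-s]\s[\p,\mathfrak{u}]\s\mathfrak{u}$, this gives $\g_y\s\mathfrak{p}_{[x]}$, so the intersection has the required dimension $\ell$. Hence every orbit in $(x+\mathfrak{u})\cap\greg$ is open. Since $x+\mathfrak{u}$ is an irreducible affine space, its nonempty open subset $(x+\mathfrak{u})\cap\greg$ is irreducible, hence connected, and a connected space partitioned into open orbits consists of a single orbit.
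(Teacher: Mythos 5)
Your proof is correct, but it reaches the lemma by a genuinely different route than the paper in two places. For part (i) and the forward direction of (ii), the paper runs a single group-level dimension chain \eqref{Equation: Multiple} on the orbit $P_{[x]}\cdot y$, which yields the orbit openness and $\dim L(P)_{[x]}=\ell$ simultaneously; you instead prove the forward direction first by a snake-lemma computation on the exact sequence $0\to\mathfrak{u}(\p)\to\p\to\mathfrak{l}(\p)\to 0$, using $\dim\bigl(\mathfrak{u}(\p)/\ad_y\mathfrak{u}(\p)\bigr)=\dim\bigl(\g_y\cap\mathfrak{u}(\p)\bigr)$ to get $\dim\p_y\geq\dim\mathfrak{l}(\p)_{[x]}$ --- this checks out, and it has the bonus of delivering $\g_y=\p_y\s\p$ for $y$ regular directly, a fact the paper extracts separately (it is the Lie-algebra content of Lemma \ref{Lemma: In Borel}); your tangent-space argument for (i) via $\g_y\s\p_{[x]}$ and open orbits partitioning the irreducible set $(x+\mathfrak{u}(\p))\cap\greg$ is then essentially the paper's argument reorganized. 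The real divergence is the backward direction of (ii): the paper invokes Kostant's normal form \cite[Theorem 8]{kostant} to conjugate $x$ into $e_{\Gamma}+\mathfrak{b}_-$ and then adds $e_{\Delta}-e_{\Gamma}\in\mathfrak{u}(\p)$, citing \cite[Lemma 10]{kostant} for regularity of $e_{\Delta}+\mathfrak{b}_-$; you instead take the Jordan decomposition $s=s_{\mathrm{ss}}+s_{\mathrm{nil}}$, pass to the reductive centralizer $\mathfrak{m}=\g_{s_{\mathrm{ss}}}$ of rank $\ell$, identify $\mathfrak{m}\cap\p$ as a parabolic of $\mathfrak{m}$ with Levi $\mathfrak{l}_{s_{\mathrm{ss}}}$ and nilradical $\mathfrak{m}\cap\mathfrak{u}(\p)$, and complete the principal nilpotent $s_{\mathrm{nil}}$ of $\mathfrak{l}_{s_{\mathrm{ss}}}$ to a principal nilpotent of $\mathfrak{m}$ by adding the complementary simple root vectors, which do lie in $\mathfrak{m}\cap\mathfrak{u}(\p)$ once the Borel of $\mathfrak{m}$ is chosen inside $\mathfrak{m}\cap\p$ (your flagged ``delicate points'' all verify: both conjugations are by subgroups of $P$, hence preserve $\mathfrak{u}(\p)$). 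Notably, your construction is precisely the additive analogue of the paper's own proof of the multiplicative version, Lemma \ref{Lemma: Single orbit group case}, which uses the Jordan decomposition and \cite[Theorem 1.3(a)]{lus-spa:79}; so what your route buys is a uniform treatment of the additive and multiplicative cases, with a self-contained proof of the relevant Lusztig--Spaltenstein-type extension statement, at the cost of more root-system bookkeeping than the paper's shorter appeal to Kostant's slice normal form.
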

	
	\begin{proof}
		We begin by proving (i). To this end, observe that $x+\mathfrak{u}(\p)$ is stable under the adjoint action of $P_{[x]}$ on $\g$. It therefore suffices to prove that the $P_{[x]}$-orbit of any point in $(x+\mathfrak{u}(\p))\cap\greg$ is open and dense in $x+\mathfrak{u}(\p)$. We also recognize that $P_{[x]}$-orbits are open in their closures, and that $x+\mathfrak{u}(\p)$ is irreducible. This further reduces us to proving that the dimension of the $P_{[x]}$-orbit of any point in $(x+\mathfrak{u}(\p))\cap\greg$ equals $\dim(x+\mathfrak{u}(\p))$. 
		
		Suppose that $y\in (x+\mathfrak{u}(\p))\cap\greg$. It follows that \begin{equation}\label{Equation: Multiple}\begin{split}\dim (P_{[x]}\cdot y) & = \dim P_{[x]}-\dim(P_{[x]}\cap G_y) \\ & \geq \dim P_{[x]}-\dim G_y \\ & =\dim P_{[x]}-\ell\\ &=\dim L(P)_{[x]}+\dim U(P)-\ell \\ & \geq \dim U(P) \\ & =\dim(x+\mathfrak{u}(\p)),\end{split}\end{equation} where the fifth line follows from the fact that $$\dim L(P)_{[x]}\geq\mathrm{rank}\hspace{1pt}L(P)=\mathrm{rank}\hspace{1pt}G=\ell.$$ Since $P_{[x]}\cdot y\s x+\mathfrak{u}(\p)$, the two inequalities in \eqref{Equation: Multiple} must be equalities. We conclude that $\dim (P_{[x]}\cdot y)=\dim(x+\mathfrak{u}(\p))$ and $\dim L(P)_{[x]}=\ell$. This verifies (i), as well as the forward implication in (ii).
		
		To prove the backward implication in (ii), choose a Cartan subalgebra $\h\s\g$ and Borel subalgebra $\mathfrak{b}\s\g$ satisfying $\h\s\mathfrak{b}\s\p$. These choices yield sets of roots $\Phi\s\mathfrak{h}^*$, positive roots $\Phi^{+}\s\Phi$, and simple roots $\Delta\s\Phi^{+}$. There is a unique subset $\Gamma\s\Delta$ satisfying $\p=\mathfrak{l}\oplus\mathfrak{u}(\p)$, where $$\mathfrak{l}\coloneqq\bigg(\bigoplus_{\alpha\in\mathrm{span}_{\mathbb{Z}}(\Gamma)\cap\Phi^+}\g_{-\alpha}\bigg)\oplus\h\oplus\bigg(\bigoplus_{\alpha\in\mathrm{span}_{\mathbb{Z}}(\Gamma)\cap\Phi^+}\g_{\alpha}\bigg).$$ It suffices to prove that $x+\mathfrak{u}(\p)\cap\greg\neq\emptyset$ for all $x\in\mathfrak{l}_{\text{reg}}$.
		
		Suppose that $x\in\mathfrak{l}_{\text{reg}}$. Choose $e_{\alpha}\in\g_{\alpha}\setminus\{0\}$ for each $\alpha\in\Delta$, and consider the elements
		$$e_{\Gamma}\coloneqq\sum_{\alpha\in\Gamma}e_{\alpha}\quad\text{and}\quad e_{\Delta}\coloneqq\sum_{\alpha\in\Delta}e_{\alpha}.$$
		At the same time, let $\mathfrak{b}_{-}\s\g$ denote the opposite Borel subalgebra with respect to $\mathfrak{h}$ and $\mathfrak{b}$. The regularity of $x$ in $\mathfrak{l}$ allows one to find $g\in L$ satisfying 
		$\mathrm{Ad}_g(x)\in e_{\Gamma}+\mathfrak{b}_{-}$ \cite[Theorem 8]{kostant}, where $L\s G$ is a Levi subgroup integrating $\mathfrak{l}$. Setting $y\coloneqq\mathrm{Ad}_g(x)$ and noting that $\mathrm{Ad}_g(\mathfrak{u}(\p))=\mathfrak{u}(\p)$, it suffices to prove that $(y+\mathfrak{u}(\p))\cap\greg\neq\emptyset$. On the other hand, note that $(e_{\Delta}-e_{\Gamma})\in\mathfrak{u}(\p)$ and $y+(e_{\Delta}-e_{\Gamma})\in e_{\Delta}+\mathfrak{b}_{-}$. The former implies that $y+(e_{\Delta}-e_{\Gamma})\in y+\mathfrak{u}(\p)$, and the latter tells us that $y+(e_{\Delta}-e_{\Gamma})\in\greg$ \cite[Lemma 10]{kostant}. We conclude that $(y+\mathfrak{u}(\p))\cap\greg\neq\emptyset$. This completes the proof of (ii). Part (iii) follows immediately from (ii).
	\end{proof}
	
	\begin{lemma}\label{Lemma: In Borel}
		Let $\p\s\g$ be a parabolic subalgebra. If $x\in\mathfrak{p}\cap\greg$, then $G_x\s P$. 
	\end{lemma}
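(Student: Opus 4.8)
The plan is to exploit the fact that a parabolic subgroup is self-normalizing, so that $P=N_G(\p)$; it then suffices to show that every $g\in G_x$ satisfies $\Ad_g(\p)=\p$. Since $\Ad_g$ is a Lie algebra automorphism fixing $x$, it preserves $\g_x=\g_{\Ad_g(x)}$, so that both $\p$ and $\Ad_g(\p)$ are parabolic subalgebras in the conjugacy class of $\p$ containing $x$ and $\g_x$. The first task is to record the infinitesimal and identity-component versions of the claim, namely $\g_x\s\p$ and $G_x^\circ\s P$.

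These I would extract by specializing the proof of Lemma \ref{Lemma: Single orbit}(i) to the point $y=x$, which is legitimate because $x\in(x+\mathfrak{u}(\p))\cap\greg$. The displayed chain \eqref{Equation: Multiple} then becomes a chain of \emph{equalities}, forcing $\dim(P_{[x]}\cap G_x)=\dim G_x=\ell$ and $\dim L(P)_{[x]}=\ell$. The first equality shows that $P_{[x]}\cap G_x$ is open in $G_x$, hence contains $G_x^\circ$; thus $G_x^\circ\s P_{[x]}\s P$, and passing to Lie algebras gives $\g_x=\Lie(P_{[x]}\cap G_x)\s\p$.

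It remains to control the component group of $G_x$, which I expect to be the crux. Here I would pass to the Jordan decomposition $x=s+n$. As $\p$ is algebraic, $s,n\in\p$, and any $g\in G_x$ fixes both $s$ and $n$; in particular $g\in G_s$ and $\Ad_g$ is an automorphism of the reductive algebra $\g_s$ fixing $n$. Regularity of $x$ in $\g$ is equivalent to $n$ being a regular nilpotent element of $\g_s$, and $\p\cap\g_s$ is a parabolic subalgebra of $\g_s$ containing $n$. Since a regular nilpotent element lies in a unique Borel subalgebra, and every element of a parabolic lies in a Borel subalgebra contained in that parabolic, the unique Borel $\bb_0\s\g_s$ through $n$ satisfies $\bb_0\s\p\cap\g_s$, and in fact $\bb_0=\bb\cap\g_s$ for a Borel $\bb$ of $\g$ with $\bb\s\p$ and $x\in\bb$; the existence of $\bb$ uses Lemma \ref{Lemma: Single orbit}(ii), which yields $[x]\in\lf(\p)_{\mathrm{reg}}$, together with the fact that every element of a reductive Lie algebra lies in a Borel subalgebra. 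This reduces the whole statement to the Borel case.

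For the Borel case, $\Ad_g$ carries $\bb\cap\g_s$ to another Borel of $\g_s$ still containing $\Ad_g(n)=n$, so uniqueness forces $\Ad_g(\bb\cap\g_s)=\bb\cap\g_s$. The hard part will be the final upgrade from ``$g$ normalizes $\bb\cap\g_s$ and fixes $s$'' to ``$g$ normalizes $\bb$'': a priori $\Ad_g$ could send $\bb$ to a \emph{different} Borel of $\g$ lying in a distinct connected component of the fixed-point locus $\{\bb':s\in\bb'\}$ while preserving its intersection with $\g_s$. I anticipate ruling this out using that $g$ genuinely fixes the regular element $x$ — not merely $s$ — so that the induced permutation of the finite fiber $\{\bb':x\in\bb'\}$ is trivial; combined with $G_x^\circ\s P$, this would give $G_x\s P$. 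Making this component-preservation argument precise, for instance via the identification of each component of the $s$-fixed Borels with the flag variety of $G_s$, is the technical heart of the proof.
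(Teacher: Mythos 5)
Your first half is exactly the paper's argument: specializing the chain \eqref{Equation: Multiple} to $y=x$ and using that its inequalities are equalities yields $\dim(\p_{[x]}\cap\g_x)=\ell=\dim\g_x$, hence $\g_x\s\p$ and $G_x^{\circ}\s P$. But the remainder — the control of $\pi_0(G_x)$ — is where your proposal has a genuine gap, and you flag it yourself. Your strategy reduces the lemma to the claim that each $g\in G_x$ acts trivially on the finite set $\{\bb':x\in\bb'\}$ of Borel subalgebras containing $x$. No independent argument for this is given, and none is readily available along these lines: the components of the locus of $s$-fixed Borels each contain exactly one Borel through $x$, so triviality of this permutation is \emph{equivalent} to $G_x$ normalizing every Borel containing $x$, which is essentially the statement of the lemma in the Borel case. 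Your reduction therefore travels in a circle: the ``component-preservation'' step you defer is not a technical refinement but the entire content of the result. Note also that for non-adjoint $G$ the claim that $G_x$ acts trivially on the fiber is the most one could hope for, since $G_x$ genuinely is disconnected in general (e.g.\ $G=\SL_2$ and $x$ regular nilpotent, where $G_x=\{\pm I\}\cdot U$); any correct argument must somehow dispose of the center first.

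The paper closes this gap in one stroke: since $Z(G)\s P$, one may pass to the adjoint group, and there Kostant's theorem \cite[Proposition 14]{kostant} asserts that the centralizer $G_x$ of a regular element is \emph{connected}. Combined with $\g_x\s\p$ (your first half), connectedness immediately gives $G_x\s P$, with no Jordan decomposition, no flag-variety fixed-point analysis, and no case distinction between Borels and general parabolics. So the fix for your proposal is not to make the permutation argument precise — that route is circular as stated — but to import the connectedness of $G_x$ in the adjoint group, after first quotienting by $Z(G)$, which is harmless precisely because $Z(G)\s P$.
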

	
	\begin{proof}
		Since $Z(G)\s P$, we may assume that $G$ is of adjoint type. We begin by letting $\mathfrak{p}_{[x]}\s\g$ denote the Lie algebra of $P_{[x]}$, and recalling that the inequalities in \ref{Equation: Multiple} are equalities. By using these equalities in the special case $y=x$, we find that $\dim\p_{[x]}-\dim(\p_{[x]}\cap\g_x)=\dim\p_{[x]}-\ell$. This amounts to having $\dim(\p_{[x]}\cap\g_x)=\ell$. We also have $\dim\g_x=\ell$, implying that $\mathfrak{p}_{[x]}\cap\g_x=\g_x$. This is equivalent to the inclusion $\g_{x}\s\p_{[x]}$. In particular, $\g_{x}\s\p$. The inclusion $G_x\s P$ now follows from the fact that $G_x$ is connected \cite[Proposition 14]{kostant}.
	\end{proof}
	
	\subsection{Residual actions on associated bundles}\label{Subsection: Residual} Consider a closed subgroup $H\s G$ and finite-dimensional $H$-module $V$. Let $G\times H$ act on $G\times V$ by
	$$(k,h)\cdot(g,v)\coloneqq(kgh^{-1},h\cdot v),\quad (k,h)\in G\times H,\text{ }(g,v)\in G\times V.$$ The action of $H=\{e\}\times H\s G\times H$ admits a geometric quotient $$G\times_H V\coloneqq (G\times V)/H,$$ to which the action of $G=G\times\{e\}\s G\times H$ descends. This quotient variety forms a $G$-equivariant vector bundle over $G/H$, with bundle projection
	$$\theta_{V}:G\times_H V\longrightarrow G/H,\quad [g:v]\mapsto [g].$$
	
	Now suppose that $K\s H$ is a closed, normal subgroup. We may form the geometric quotient variety $G\times_K V$, as above. Note that the action of $G\times H$ on $G\times V$ descends to an action of $G\times (H/K)$ on $G\times_K V$. The natural map $G\times_K V\longrightarrow G\times_H V$ is then a geometric quotient of $G\times_K V$ by $H/K = \{e\}\times (H/K)\s G\times (H/K)$.

	\subsection{The Poisson Hamiltonian $G$-variety $G\times_P\p$}\label{Subsection: The variety} Consider a parabolic subgroup $P\s G$ with Lie algebra $\mathfrak{p}\s\mathfrak{g}$. Let $\pi:G\longrightarrow G/U(P)$ denote the quotient morphism. The differential $\mathrm{d}\pi_e:\g\longrightarrow T_{[e]}(G/U(P))$ is surjective with kernel $\mathfrak{u}(\p)$, and so descends to a $U(P)$-module isomorphism $$\mathfrak{g}/\mathfrak{u}(\p)\overset{\cong}\longrightarrow T_{[e]}(G/U(P)),\quad [\xi]\mapsto \overline{\xi}\coloneqq\mathrm{d}\pi_e(\xi).$$ By inverting the induced isomorphism of dual vector spaces, one obtains a $U(P)$-module isomorphism
	$$\alpha:(\mathfrak{g}/\mathfrak{u}(\p))^*\overset{\cong}\longrightarrow T_{[e]}^*(G/U(P)).$$ Note that $$\alpha(\phi)(\overline{\xi})=\phi([\xi])$$ for all $\phi\in(\mathfrak{g}/\mathfrak{u}(\p))^*$ and $\xi\in\g$, and that $\alpha$ induces an isomorphism of $G$-equivariant vector bundles over $G/U(P)$. This isomorphism is given by
	\begin{equation}\label{Equation: Formula}\varphi:G\times_{U(P)}(\mathfrak{g}/\mathfrak{u}(\p))^*\overset{\cong}\longrightarrow T^*(G/U(P)),\quad [g:\phi]\mapsto ([g],\alpha(\phi)\circ ((\mathrm{d}L_g)_{[e]})^{-1}),\end{equation} where $L_g:G/U(P)\longrightarrow G/U(P)$ is left multiplication by $g\in G$.
	
	Consider the action of $G\times G$ on $G$ defined by 
	$$(h,k)\cdot g\coloneqq hgk^{-1},\quad (h,k)\in G\times G,\text{ }g\in G.$$ There is an induced action of $G\times L(P)$ on $G/U(P)$. The group $G\times L(P)$ thereby acts on the cotangent bundle $T^*(G/U(P))$. On the other hand, consider the $P$-modules $(\mathfrak{g}/\mathfrak{u}(\p))^*$ and $\mathfrak{p}$. The discussion in Subsection \ref{Subsection: Residual} yields induced actions of $G\times P$ on $G\times(\mathfrak{g}/\mathfrak{u}(\p))^*$ and $G\times\mathfrak{p}$. The same discussion implies that these induced actions descend to ones of $G\times L(P)$ on $G\times_{U(P)}(\mathfrak{g}/\mathfrak{u}(\p))^*$ and $G\times_{U(P)}\mathfrak{p}$. 
	
	\begin{proposition}\label{Proposition: Equivariance}
		The vector bundle isomorphism $\varphi:G\times_{U(P)}(\mathfrak{g}/\mathfrak{u}(\p))^*\longrightarrow T^*(G/U(P))$ is $G\times L(P)$-equivariant.
	\end{proposition}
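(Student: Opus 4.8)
The plan is to exploit the direct-product structure of $G\times L(P)$. Every element factors as $(h,e)\cdot(e,[p])$, and the actions of $G\times\{e\}$ and $\{e\}\times L(P)$ commute on both $G\times_{U(P)}(\mathfrak{g}/\mathfrak{u}(\mathfrak{p}))^*$ and $T^*(G/U(P))$, so equivariance under $G\times L(P)$ is multiplicative: it suffices to verify separately that $\varphi$ intertwines the left $G\times\{e\}$-actions and the right $\{e\}\times L(P)$-actions. The first of these is essentially formal. I would first observe that the defining formula \eqref{Equation: Formula} exhibits $\varphi([g:\phi])$ as precisely the value of the cotangent-lifted left $G$-action on the covector $\alpha(\phi)\in T^*_{[e]}(G/U(P))$, namely the pullback of $\alpha(\phi)$ under $((\mathrm{d}L_g)_{[e]})^{-1}$. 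Since $h\in G$ sends $[g:\phi]$ to $[hg:\phi]$, $G$-equivariance reduces to the functoriality of the cotangent lift, equivalently to the chain-rule identity $(\mathrm{d}L_h)_{[g]}\circ(\mathrm{d}L_g)_{[e]}=(\mathrm{d}L_{hg})_{[e]}$ coming from $L_h\circ L_g=L_{hg}$.

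The substance lies in the right $L(P)$-action, and this is the step I expect to be the main obstacle. Here, for $p\in P$ representing $[p]\in L(P)$, the action on the associated bundle (inherited from the $G\times P$-action of Subsection~\ref{Subsection: Residual}) is $[p]\cdot[g:\phi]=[gp^{-1}:p\cdot\phi]$, where $p\cdot\phi$ denotes the contragredient $P$-action on $(\mathfrak{g}/\mathfrak{u}(\mathfrak{p}))^*$, while on $T^*(G/U(P))$ it is the cotangent lift of the self-map $\rho_{[p]}\colon[g]\mapsto[gp^{-1}]$ of $G/U(P)$ (well defined since $U(P)\trianglelefteq P$). The key computation I would carry out is the differential identity
\[
(\mathrm{d}\rho_{[p]})_{[g]}\circ(\mathrm{d}L_g)_{[e]}=(\mathrm{d}L_{gp^{-1}})_{[e]}\circ\overline{\mathrm{Ad}_p},
\]
where $\overline{\mathrm{Ad}_p}$ is the map induced by $\mathrm{Ad}_p$ on $T_{[e]}(G/U(P))\cong\mathfrak{g}/\mathfrak{u}(\mathfrak{p})$ (well defined as $\mathrm{Ad}_p$ preserves $\mathfrak{u}(\mathfrak{p})$). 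This identity is obtained by differentiating the relation $g\exp(t\zeta)p^{-1}=gp^{-1}\exp(t\,\mathrm{Ad}_p\zeta)$ at $t=0$ and projecting to $G/U(P)$.

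With that identity in hand, the remaining step is bookkeeping. I would write an arbitrary tangent vector at $[gp^{-1}]$ as $w=(\mathrm{d}L_{gp^{-1}})_{[e]}(\overline{\eta})$ for some $\eta\in\mathfrak{g}$, evaluate both $\varphi([gp^{-1}:p\cdot\phi])$ and $[p]\cdot\varphi([g:\phi])$ on $w$, and compare. Using the characterizing property $\alpha(\psi)(\overline{\xi})=\psi([\xi])$ of $\alpha$, the left-hand value collapses to $(p\cdot\phi)([\eta])=\phi([\mathrm{Ad}_{p^{-1}}\eta])$; inverting the displayed identity shows the cotangent-lift of $\rho_{[p]}$ composed with that of $L_g$ sends $w$ back to $\overline{\mathrm{Ad}_{p^{-1}}\eta}\in T_{[e]}(G/U(P))$, so the right-hand value is $\alpha(\phi)(\overline{\mathrm{Ad}_{p^{-1}}\eta})=\phi([\mathrm{Ad}_{p^{-1}}\eta])$, matching the left. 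The base points agree since $\rho_{[p]}([g])=[gp^{-1}]$. The crux is thus the differential identity together with correctly tracking the contragredient convention on $(\mathfrak{g}/\mathfrak{u}(\mathfrak{p}))^*$; everything else is the chain rule and the definition of $\alpha$.
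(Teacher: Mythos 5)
Your proposal is correct and takes essentially the same approach as the paper: the paper likewise reduces to $L(P)$-equivariance and its crux is exactly your differential identity, proved there in the equivalent form $((\mathrm{d}L_g)_{[e]})^{-1}\circ((\mathrm{d}R_{p^{-1}})_{[e]})^{-1}\circ(\mathrm{d}L_{gp^{-1}})_{[e]}=\overline{\mathrm{Ad}_{p^{-1}}}$ (via $L_{g^{-1}}\circ R_p\circ L_{gp^{-1}}=L_{p^{-1}}\circ R_p$), after which both sides evaluate to $\phi([\mathrm{Ad}_{p^{-1}}(\xi)])$ exactly as in your bookkeeping. The only cosmetic difference is that you spell out the formal $G$-equivariance step, which the paper dispatches with ``it suffices to prove that $\varphi$ is $L(P)$-equivariant.''
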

	
	\begin{proof}
		It suffices to prove that $\varphi$ is $L(P)$-equivariant. An examination of \eqref{Equation: Formula} reveals that this is so if and only if
		$$\alpha(p\cdot\phi)\circ((\mathrm{d}L_{gp^{-1}})_{[e]})^{-1}=\alpha(\phi)\circ((\mathrm{d}L_g)_{[e]})^{-1}\circ((\mathrm{d}R_{p^{-1}})_{[e]})^{-1}$$ as elements of $T_{[gp^{-1}]}^*(G/U(P))$ for all $p\in P$ and $[g:\phi]\in G\times_{U(P)}(\mathfrak{g}/\mathfrak{u}(\p))^*$, where $R_{p^{-1}}:G/U(P)\longrightarrow G/U(P)$ is the result of letting right multiplication by $p^{-1}$ descend from a map $G\longrightarrow G$ to a map $G/U(P)\longrightarrow G/U(P)$. To this end, note that each element of $T_{[gp^{-1}]}(G/U(P))$ is given by $(\mathrm{d}L_{gp^{-1}})_{[e]}(\overline{\xi})$ for some $\xi\in\g$. We are therefore reduced to proving that
		\begin{equation}\label{Equation: Identity}(\alpha(p\cdot\phi)\circ((\mathrm{d}L_{gp^{-1}})_{[e]})^{-1})((\mathrm{d}L_{gp^{-1}})_{[e]}(\overline{\xi}))=(\alpha(\phi)\circ((\mathrm{d}L_g)_{[e]})^{-1}\circ((\mathrm{d}R_{p^{-1}})_{[e]})^{-1})((\mathrm{d}L_{gp^{-1}})_{[e]})(\overline{\xi}))\end{equation} for all for all $p\in P$, $[g:\phi]\in G\times_{U(P)}(\mathfrak{g}/\mathfrak{u}(\p))^*$, and $\xi\in\g$. Our proof will consist of showing that each side of \eqref{Equation: Identity} is equal to $\phi([\mathrm{Ad}_{p^{-1}}(\xi)])$. 
		
		Observe that left-hand side of \eqref{Equation: Identity} is
		$$\alpha(p\cdot\phi)(\overline{\xi})=(p\cdot\phi)([\xi])=\phi([\mathrm{Ad}_{p^{-1}}(\xi)]).$$ The right-hand side of \eqref{Equation: Identity} can be simplified by observing that
		$$((\mathrm{d}L_g)_{[e]})^{-1}\circ ((\mathrm{d}R_{p^{-1}})_{[e]})^{-1}\circ (\mathrm{d}L_{gp^{-1}})_{[e]}=\mathrm{d}(L_{g^{-1}}\circ R_p\circ L_{gp^{-1}})_{[e]}=\mathrm{d}(L_{p^{-1}}\circ R_p)_{[e]}=\overline{\mathrm{Ad}_{p^{-1}}},$$ where $\overline{\mathrm{Ad}_{p^{-1}}}:T_{[e]}(G/U(P))\longrightarrow T_{[e]}(G/U(P))$ is the result of letting $\mathrm{Ad}_{p^{-1}}:\g\longrightarrow\g$ descend to an automorphism of $T_{[e]}(G/U(P))$. The right-hand side is therefore given by
		$$\alpha(\phi)(\overline{\mathrm{Ad}_{p^{-1}}}(\overline{\xi}))=\alpha(\phi)(\overline{\mathrm{Ad}_{p^{-1}}(\xi)})=\phi([\mathrm{Ad}_{p^{-1}}(\xi)]),$$ completing the proof.
	\end{proof}
	
	Let $\langle\cdot,\cdot\rangle:\g\otimes\g\longrightarrow\IC$ be the Killing form. One knows that $\mathfrak{p}$ is the annihilator of $\mathfrak{u}(\p)$ in $\mathfrak{g}$ with respect to this form. We thereby obtain a $P$-module isomorphism $\mathfrak{p}\longrightarrow(\mathfrak{g}/\mathfrak{u}(\p))^*$, whose composition with $\alpha:(\mathfrak{g}/\mathfrak{u}(\p))^*\overset{\cong}\longrightarrow T^*_{[e]}(G/U(P))$ is 
	\begin{equation}\label{Equation: Isomorphism beta}\beta:\mathfrak{p}\overset{\cong}\longrightarrow T^*_{[e]}(G/U(P)),\quad\beta(x)(\overline{\xi})=\langle x,\xi\rangle,\quad x\in\mathfrak{p},\text{ }\xi\in\mathfrak{g}.\end{equation}
	This combines with \eqref{Equation: Formula} and Proposition \ref{Proposition: Equivariance} to give a $G\times L(P)$-equivariant isomorphism
	\begin{equation}\label{Equation: Eq iso}\psi_{\p}:G\times_{U(P)}\mathfrak{p}\overset{\cong}\longrightarrow T^*(G/U(P)),\quad [g:x]\mapsto([g],\beta(x)\circ((\mathrm{d}L_g)_{[e]})^{-1})\end{equation} of vector bundles over $G/U(P)$. On the other hand, the $G\times L(P)$-action on $T^*(G/U(P))$ is Hamiltonian with respect to the canonical symplectic structure on the latter. We may therefore equip $G\times_{U(P)}\p$ with the symplectic Hamiltonian $G\times L(P)$-variety structure for which $\psi_{\p}$ is an isomorphism of symplectic Hamiltonian $G\times L(P)$-varieties. A straightforward exercise shows
	$$G\times_{U(P)}\p\longrightarrow\g\times\mathfrak{l}(\p),\quad [g:x]\mapsto (\mathrm{Ad}_g(x),-[x])$$ to be a moment map, where the Killing form is used to identify $\g^*$ (resp. $\mathfrak{l}(\p)^*$) with $\g$ (resp. $\mathfrak{l}(\p)$). It follows that $$G\times_P\p=(G\times_{U(P)}\p)/L(P)$$ is a Poisson Hamiltonian $G$-space with moment map
	$$\mu_{\p}:G\times_P\p\longrightarrow\g,\quad [g:x]\mapsto\mathrm{Ad}_g(x).$$
	
	\section{Partial Grothendieck--Springer resolutions}\label{Section: Partial Grothendieck--Springer resolutions}
	We now develop the relevant Poisson-geometric features of the partial Grothendieck--Springer resolutions $\mu_{\mathcal{C}}:\g_{\mathcal{C}}\longrightarrow\g$. These resolutions are formally defined in Subsection \ref{Subsection: The partial}. In Subsection \ref{Subsection: Technical}, we give several characterizations of a canonical Poisson Hamiltonian $G$-variety structure on $\mathfrak{g}_{\mathcal{C}}$.
	
	\subsection{The partial Grothendieck--Springer resolution $\mu_{\mathcal{C}}:\g_{\mathcal{C}}\longrightarrow\g$}\label{Subsection: The partial} The adjoint representation induces an action of $G$ on the set of parabolic subalgebras of $\mathfrak{g}$. An orbit of this action will be called a \textit{conjugacy class} of parabolic subalgebras. If $\mathcal{C}$ is one such conjugacy class, then the non-negative integer $d_{\mathcal{C}}\coloneqq\dim\mathfrak{p}$ is independent of $\mathfrak{p}\in\mathcal{C}$. It is straightforward to verify that $\mathcal{C}$ constitutes a closed $G$-orbit in the Grassmannian $\mathrm{Gr}(d_{\mathcal{C}},\g)$ of $d_{\mathcal{C}}$-dimensional subspaces of $\mathfrak{g}$. It follows that $\mathcal{C}$ is a smooth projective variety carrying an algebraic $G$-action.
	
	Fix a conjugacy class $\mathcal{C}$ of parabolic subalgebras of $\g$. Let $\pi_{\mathcal{C}}:\mathfrak{g}_{\mathcal{C}}\longrightarrow\mathcal{C}$ be the $G$-equivariant vector bundle obtained by pulling the tautological bundle on $\mathrm{Gr}(d_{\mathcal{C}},\g)$ back along the inclusion $\mathcal{C}\s\mathrm{Gr}(d_{\mathcal{C}},\g)$. It follows that
	$$\mathfrak{g}_{\mathcal{C}}=\{(\mathfrak{p},x)\in\mathcal{C}\times\g:x\in\mathfrak{p}\},$$ and that $\pi_{\mathcal{C}}(\p,x)=\mathfrak{p}$ for all $(\p,x)\in\g_{\mathcal{C}}$. One also has the $G$-equivariant morphism
	$$\mu_{\mathcal{C}}:\g_{\mathcal{C}}\longrightarrow\g,\quad(\p,x)\mapsto x.$$
	
	\begin{definition}
		The morphism $\mu_{\mathcal{C}}:\g_{\mathcal{C}}\longrightarrow\g$ is called the \textit{partial Grothendieck--Springer resolution} determined by $\mathcal{C}$.
	\end{definition}
	
	\subsection{The Poisson geometry of $\g_{\mathcal{C}}$}\label{Subsection: Technical}
	Let $\mathcal{C}$ be a conjugacy class of parabolic subalgebras of $\g$.
	Suppose that $\mathfrak{p}\in\mathcal{C}$, and let $P\s G$ be the parabolic subgroup integrating $\mathfrak{p}$. One has the $G$-equivariant variety isomorphisms
	$$\phi:G/P\overset{\cong}\longrightarrow\mathcal{C},\quad [g]\mapsto\mathrm{Ad}_g(\mathfrak{p})\quad\text{and}\quad \delta_{\p}:G\times_P\mathfrak{p}\overset{\cong}\longrightarrow\mathfrak{g}_{\mathcal{C}},\quad [g:x]\mapsto(\mathrm{Ad}_g(\mathfrak{p}),\mathrm{Ad}_g(x)),$$ and commutative diagram
	$$\begin{tikzcd}
		G\times_P\mathfrak{p}\arrow[r, "\delta_{\mathfrak{p}}"] \arrow[d] & \g_{\mathcal{C}} \arrow[d, "\pi_{\mathcal{C}}"] \\
		G/P \arrow[r, swap, "\phi"] & \mathcal{C}
	\end{tikzcd}.$$
	
	\begin{proposition}\label{Proposition: Image}
		If $(\p,x)\in\g_{\mathcal{C}}$, then the differential $(\mathrm{d}\mu_{\mathcal{C}})_{(\p,x)}:T_{(\p,x)}\g_{\mathcal{C}}\longrightarrow\g$ is an isomorphism if and only if $\g_x\cap\mathfrak{u}(\p)=\{0\}$.
	\end{proposition}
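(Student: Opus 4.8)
The plan is to transport the computation to the associated bundle $G\times_P\p$ via the $G$-equivariant isomorphism $\delta_{\p}:G\times_P\p\overset{\cong}\longrightarrow\g_{\mathcal{C}}$ of Subsection \ref{Subsection: Technical}. Since $\mu_{\mathcal{C}}\circ\delta_{\p}=\mu_{\p}$, where $\mu_{\p}([g:x])=\mathrm{Ad}_g(x)$ is the moment map from Subsection \ref{Subsection: The variety}, and since $\delta_{\p}([e:x])=(\p,x)$, the differential $(\mathrm{d}\mu_{\mathcal{C}})_{(\p,x)}$ is an isomorphism if and only if $(\mathrm{d}\mu_{\p})_{[e:x]}$ is. First I would record the dimension count $\dim(G\times_P\p)=\dim G-\dim P+\dim\p=\dim G=\dim\g$; this shows that the source and target of the differential have equal dimension, so that throughout the argument ``isomorphism'' may be replaced by ``surjective''.

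Next I would compute $(\mathrm{d}\mu_{\p})_{[e:x]}$ explicitly. Writing $G\times_P\p=(G\times\p)/P$ with $P$ acting by $h\cdot(g,y)=(gh^{-1},\mathrm{Ad}_h(y))$, the tangent space at $(e,x)\in G\times\p$ is $\g\oplus\p$, and the tangent space to the $P$-orbit through $(e,x)$ is $\{(-\zeta,[\zeta,x]):\zeta\in\p\}$. Thus $T_{[e:x]}(G\times_P\p)$ is the quotient of $\g\oplus\p$ by this subspace. Differentiating $\mu_{\p}(g,y)=\mathrm{Ad}_g(y)$ at $(e,x)$ gives the linear map $\g\oplus\p\longrightarrow\g$, $(\xi,v)\mapsto[\xi,x]+v$; one checks that it annihilates the orbit directions (since $[-\zeta,x]+[\zeta,x]=0$), hence descends, and that its image is $\mathrm{im}(\mathrm{ad}_x)+\p$. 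So $(\mathrm{d}\mu_{\p})_{[e:x]}$ is surjective if and only if $\mathrm{im}(\mathrm{ad}_x)+\p=\g$.

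The final step identifies this surjectivity condition with $\g_x\cap\mathfrak{u}(\p)=\{0\}$ using the Killing form $\langle\cdot,\cdot\rangle$. By invariance, $\mathrm{ad}_x$ is skew-adjoint, so $(\mathrm{im}(\mathrm{ad}_x))^{\perp}=\ker(\mathrm{ad}_x)=\g_x$; and since $\p$ is the annihilator of $\mathfrak{u}(\p)$ with respect to the Killing form, we have $\p^{\perp}=\mathfrak{u}(\p)$. Hence $(\mathrm{im}(\mathrm{ad}_x)+\p)^{\perp}=(\mathrm{im}(\mathrm{ad}_x))^{\perp}\cap\p^{\perp}=\g_x\cap\mathfrak{u}(\p)$, and by nondegeneracy of the Killing form $\mathrm{im}(\mathrm{ad}_x)+\p=\g$ if and only if $\g_x\cap\mathfrak{u}(\p)=\{0\}$. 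Combining the three steps proves the proposition.

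I expect the main obstacle to lie in the bookkeeping of the second step---correctly identifying $T_{[e:x]}(G\times_P\p)$ as a quotient of $\g\oplus\p$ and verifying that the naive differential $(\xi,v)\mapsto[\xi,x]+v$ descends to it---rather than in any conceptual difficulty. The Killing-form duality in the last step is then routine once the image $\mathrm{im}(\mathrm{ad}_x)+\p$ has been pinned down, and the dimension count makes the reduction from ``isomorphism'' to ``surjective'' immediate.
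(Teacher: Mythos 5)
Your proposal is correct and follows essentially the same route as the paper: reduce to $G\times_P\p$ via $\delta_{\p}$, use $\dim\g_{\mathcal{C}}=\dim\g$ to replace ``isomorphism'' by ``surjective'', compute the differential as $(\xi,v)\mapsto[\xi,x]+v$ with image $[\g,x]+\p$, and conclude via Killing-form orthogonality $([\g,x]+\p)^{\perp}=\g_x\cap\mathfrak{u}(\p)$. The only cosmetic difference is that the paper sidesteps your explicit identification of $T_{[e:x]}(G\times_P\p)$ as a quotient of $\g\oplus\p$ by precomposing with the submersion $\delta_{\p}\circ\pi:G\times\p\longrightarrow\g_{\mathcal{C}}$, so that surjectivity can be tested directly on $\g\oplus\p$.
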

	
	\begin{proof}
		As $\dim\g_{\mathcal{C}}=\dim\g$, it suffices to prove that $(\mathrm{d}\mu_{\mathcal{C}})_{(\p,x)}$ is surjective if and only if $\g_x\cap\mathfrak{u}(\p)=\{0\}$. Consider the quotient morphism $\pi:G\times\mathfrak{p}\longrightarrow G\times_P\mathfrak{p}$. The composite map $\delta_{\mathfrak{p}}\circ\pi:G\times\mathfrak{p}\longrightarrow\g_{\mathcal{C}}$ is a submersion that sends $(e,x)$ to $(\p,x)$. It therefore suffices to prove that $$\mathrm{d}(\mu_{\mathcal{C}}\circ\delta_{\mathfrak{p}}\circ\pi)_{(e,x)}:\g\oplus\mathfrak{p}\longrightarrow\g$$ is surjective if and only if $\g_x\cap\mathfrak{u}(\p)=\{0\}$. On the other hand, a straightforward calculation shows $\mu_{\mathcal{C}}\circ\delta_{\mathfrak{p}}\circ\pi:G\times\mathfrak{p}\longrightarrow\g$ to be given by $$(\mu_{\mathcal{C}}\circ\delta_{\mathfrak{p}}\circ\pi)(g,y)=\mathrm{Ad}_g(y)$$ for all $(g,y)\in G\times\mathfrak{p}$. It follows that $$\mathrm{d}(\mu_{\mathcal{C}}\circ\delta_{\mathfrak{p}}\circ\pi)_{(e,x)}(\xi,\eta)=[\xi,x]+\eta$$ for all $(\xi,\eta)\in\g\oplus\mathfrak{p}$. The image of $\mathrm{d}(\mu_{\mathcal{C}}\circ\delta_{\mathfrak{p}}\circ\pi)_{(e,x)}$ is therefore $[\g,x]+\p$. We conclude that $\mathrm{d}(\mu_{\mathcal{C}}\circ\delta_{\mathfrak{p}}\circ\pi)_{(e,x)}$ is surjective if and only if $([\g,x]+\p)^{\perp}=\{0\}$, or equivalently $\g_x\cap\mathfrak{u}(\p)=\{0\}$.
	\end{proof}
	
	Recall the Poisson Hamiltonian $G$-variety structure on $G\times_P\p$ from Subsection \ref{Subsection: The variety}. In this context, we have the following result.
	
	\begin{proposition}\label{Proposition: Canonical} The following statements are true.
		\begin{itemize}
			\item[\textup{(i)}] There exists a unique Poisson structure on $\g_{\mathcal{C}}$ that makes the $G$-equivariant isomorphism $\delta_{\mathfrak{p}}:G\times_P\mathfrak{p}\longrightarrow\g_{\mathcal{C}}$ Poisson for all $\mathfrak{p}\in\mathcal{C}$.
			\item[\textup{(ii)}] The diagram $$\begin{tikzcd}
				G\times_P\p \arrow{rr}{\delta_{\p}} \arrow[swap]{dr}{\mu_{\p}} & & \g_{\mathcal{C}} \arrow{dl}{\mu_{\mathcal{C}}} \\[10pt]
				& \g
			\end{tikzcd}$$ commutes for all $\p\in\mathcal{C}$.
			\item[\textup{(iii)}] The action of $G$ on $\g_{\mathcal{C}}$ is Hamiltonian with respect to the Poisson structure in \textup{(i)}, and it admits $\mu_{\mathcal{C}}$ as a moment map.
			\item[\textup{(iv)}] The Poisson structure and Hamiltonian $G$-action on $\g_{\mathcal{C}}$ are uniquely determined by the property that $\mu_{\mathcal{C}}$ is a moment map.
		\end{itemize}
	\end{proposition}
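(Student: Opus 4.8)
The plan is to dispatch part (ii) by a one-line computation, then isolate a \emph{uniqueness principle} equivalent to part (iv) and bootstrap everything else from it. For (ii), given $[g:x]\in G\times_P\p$ one computes $\mu_{\mathcal{C}}(\delta_{\p}([g:x]))=\mu_{\mathcal{C}}(\mathrm{Ad}_g(\p),\mathrm{Ad}_g(x))=\mathrm{Ad}_g(x)=\mu_{\p}([g:x])$, so the triangle commutes for every $\p\in\mathcal{C}$.

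The technical heart is the following claim: there is at most one Poisson structure on $\g_{\mathcal{C}}$ making the natural $G$-action Hamiltonian with moment map $\mu_{\mathcal{C}}$. To prove it I would invoke Proposition \ref{Proposition: Image} to identify the open locus $U\coloneqq\{(\p,x)\in\g_{\mathcal{C}}:\g_x\cap\mathfrak{u}(\p)=\{0\}\}$ on which $\mathrm{d}\mu_{\mathcal{C}}$ is an isomorphism. This locus is non-empty: any regular semisimple $x$ lying in a Cartan subalgebra $\h\s\p$ has $\g_x=\h$, and $\h\cap\mathfrak{u}(\p)=\{0\}$ since $\mathfrak{u}(\p)$ is nilpotent. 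As $\g_{\mathcal{C}}$ is irreducible (it is a vector bundle over the homogeneous space $\mathcal{C}\cong G/P$), the open set $U$ is in fact dense. On $U$ the transpose of $\mathrm{d}\mu_{\mathcal{C}}$ is an isomorphism, so the covectors $\mathrm{d}\langle\mu_{\mathcal{C}},\xi\rangle$, as $\xi$ ranges over $\g$, span each cotangent space. Since the moment map condition forces the contraction $\sigma^{\vee}(\mathrm{d}\langle\mu_{\mathcal{C}},\xi\rangle)$ to agree with the generating vector field $\xi_{\g_{\mathcal{C}}}$, the map $\sigma^{\vee}$ is pinned down on $U$; density together with algebraicity of $\sigma$ then forces it to be unique on all of $\g_{\mathcal{C}}$. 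This establishes (iv).

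For existence I would fix $\p_0\in\mathcal{C}$ and transport the Poisson Hamiltonian $G$-structure of Subsection \ref{Subsection: The variety} along the $G$-equivariant isomorphism $\delta_{\p_0}$. Because $\delta_{\p_0}$ is $G$-equivariant and, by (ii), intertwines $\mu_{\p_0}$ with $\mu_{\mathcal{C}}$, the resulting Poisson structure $\sigma$ makes the $G$-action on $\g_{\mathcal{C}}$ Hamiltonian with moment map $\mu_{\mathcal{C}}$, giving (iii). Part (i) then follows by combining existence with the uniqueness principle: for an arbitrary $\p\in\mathcal{C}$, transporting along $\delta_{\p}$ produces a Poisson structure for which $\mu_{\mathcal{C}}$ is again a moment map, so by (iv) it must coincide with $\sigma$; hence every $\delta_{\p}$ is Poisson with respect to the single structure $\sigma$. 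Uniqueness in (i) is immediate, since $\delta_{\p_0}$ is an isomorphism and thus admits at most one compatible Poisson structure.

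I expect the main obstacle to be the density step underpinning (iv): exhibiting a point of $\g_{\mathcal{C}}$ at which $\mathrm{d}\mu_{\mathcal{C}}$ is an isomorphism and confirming irreducibility of $\g_{\mathcal{C}}$, so that Proposition \ref{Proposition: Image} can be leveraged globally. An alternative route to (i) bypassing the moment-map detour would verify directly that the transition maps $\delta_{\p'}^{-1}\circ\delta_{\p}$ are Poisson; these are precisely the maps $[g:x]\mapsto[gk^{-1}:\mathrm{Ad}_k(x)]$ for $\p'=\mathrm{Ad}_k(\p)$, which lift to cotangent-lift symplectomorphisms of the bundles $T^*(G/U(P))$ under the identification \eqref{Equation: Eq iso}. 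I regard the uniqueness-based route as cleaner and would relegate this computation to a remark.
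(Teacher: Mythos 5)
Your proposal is correct in substance, but it organizes the argument in genuinely the opposite order from the paper. The paper proves (i) first and directly: it computes the transition maps $(\delta_{\p_2}^{-1}\circ\delta_{\p_1})([g:x])=[gh^{-1}:\mathrm{Ad}_h(x)]$, lifts them to maps $\eta:G\times_{U(P_1)}\p_1\longrightarrow G\times_{U(P_2)}\p_2$, and identifies $\eta$ under the isomorphisms $\psi_{\p_i}$ of \eqref{Equation: Eq iso} with the cotangent lift of $G/U(P_1)\longrightarrow G/U(P_2)$, $[g]\mapsto[gh^{-1}]$ --- precisely the computation you relegate to a closing remark. It then deduces (iii) from (i) and (ii), and proves (iv) last by the density argument, using Proposition \ref{Proposition: Image} on the locus $\mu_{\mathcal{C}}^{-1}(\greg\cap\gss)$ (contained in your locus $U$). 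Your uniqueness-first bootstrap --- prove the density-based uniqueness principle, transport the structure along a single $\delta_{\p_0}$ for existence, then obtain (i) by playing the structures transported along arbitrary $\delta_{\p}$ off against uniqueness --- is a legitimate economization: it reuses the density argument, which is needed for (iv) in any case, to avoid verifying Poisson-ness of the transition maps. Your mechanism for pinning down $\sigma$ on $U$ (Hamilton's equation applied to the spanning covectors $\mathrm{d}\langle\mu_{\mathcal{C}},\xi\rangle$) is equivalent to the paper's (on $U$ the moment map is a Poisson local isomorphism onto an open subset of $\g$, so the bivector is the pullback), and your non-emptiness and irreducibility checks (regular semisimple $x$ in a Cartan $\h\s\p$ with $\g_x=\h$, and $\g_{\mathcal{C}}$ a vector bundle over $G/P$) are sound.

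There is, however, one shortfall you should repair: your uniqueness principle --- at most one Poisson structure making the \emph{natural} $G$-action Hamiltonian with moment map $\mu_{\mathcal{C}}$ --- is strictly weaker than (iv) as stated, which asserts that the Hamiltonian $G$-action itself is also determined. Your bootstrap for (i) only needs the weak form, since transport along the $G$-equivariant maps $\delta_{\p}$ always produces the natural action; but to establish (iv) in full you must add that for any action admitting $\mu_{\mathcal{C}}$ as a moment map, equivariance of $\mu_{\mathcal{C}}$ together with the fact that $\mathrm{d}\mu_{\mathcal{C}}$ is an isomorphism on $U$ determines the generating vector fields on $U$, hence everywhere by density, and that connectedness of $G$ then recovers the action from its generating fields. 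This is exactly the parenthetical half of the paper's proof of (iv); it is easy, but it is absent from your write-up. A minor phrasing point in the same vein: the reason $\h\cap\mathfrak{u}(\p)=\{0\}$ is that $\mathfrak{u}(\p)$ consists of nilpotent elements while nonzero elements of $\h$ are semisimple, not that $\mathfrak{u}(\p)$ is nilpotent as a Lie algebra.
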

	
	\begin{proof}
		Part (ii) is a straightforward computation, while (iii) is an immediate consequence of (i) and (ii). It therefore remains only to prove (i) and (iv). To prove (i), note that the uniqueness assertion follows immediately from the fact that $\delta_{\mathfrak{p}}$ is a bijection for all $\p\in\mathcal{C}$. The same fact also allows one to choose $\p\in\mathcal{C}$, and then endow $\g_{\mathcal{C}}$ with the Poisson variety structure for which $\delta_{\mathfrak{p}}$ is an isomorphism of Poisson varieties. Our task is to prove that this Poisson structure does not depend on $\p$. This is the task of showing $$\delta_{\mathfrak{p}_2}^{-1}\circ\delta_{\mathfrak{p}_1}:G\times_{P_1}\p_1\longrightarrow G\times_{P_2}\p_2$$ to be a Poisson variety isomorphism for all $\p_1,\p_2\in\mathcal{C}$ with corresponding parabolic subgroups $P_1,P_2\s G$, respectively.
		
		Suppose that $\p_1,\p_2\in\mathcal{C}$, and choose $h\in G$ satisfying $\mathrm{Ad}_h(\p_1)=\p_2$. A straightforward exercise reveals that
		$$(\delta_{\mathfrak{p}_2}^{-1}\circ\delta_{\mathfrak{p}_1})([g:x])=[gh^{-1}:\mathrm{Ad}_h(x)]$$ for all $[g:x]\in G\times_{P_1}\p_1$. Let us also observe that $$\eta:G\times_{U(P_1)}\p_1\longrightarrow G\times_{U(P_2)}\p_2,\quad [g:x]\mapsto [gh^{-1}:\mathrm{Ad}_h(x)],\quad [g:x]\in G\times_{U(P_1)}P_1$$ is well-defined and makes $$\begin{tikzcd}[column sep=large]
			G\times_{U(P_1)}\mathfrak{p}_1\arrow[r, "\eta"] \arrow[d] & G\times_{U(P_2)}\p_2 \arrow[d] \\
			G\times_{P_1}\mathfrak{p}_1 \arrow[r, "\delta_{\mathfrak{p}_2}^{-1}\circ\delta_{\mathfrak{p}_1}"] & G\times_{P_2}\p_2
		\end{tikzcd}$$ commute. It therefore suffices to prove that $\eta$ is an isomorphism of symplectic varieties. 
		
		Consider the variety isomorphism $$G/U(P_1)\overset{\cong}\longrightarrow G/U(P_2),\quad [g]\mapsto [gh^{-1}],\quad [g]\in G/U(P)_1,$$ and the symplectic variety isomorphism $$\zeta:T^*(G/U(P_1))\overset{\cong}\longrightarrow T^*(G/U(P_2))$$ that it induces. It is straightforward to check that $$\begin{tikzcd}
			G\times_{U(P_1)}\mathfrak{p}_1\arrow[r, "\eta"] \arrow[d, "\psi_{\mathfrak{p}_1}"] & G\times_{U(P_2)}\p_2 \arrow[d, "\psi_{\mathfrak{p}_2}"] \\
			T^*(G/U(P_1)) \arrow[r, "\zeta"] & T^*(G/U(P_2))
		\end{tikzcd}$$ commutes, where $\psi_{\p_1}$ and $\psi_{\p_2}$ are defined in \eqref{Equation: Eq iso}. Since $\zeta$, $\psi_{\p_1}$, and $\psi_{\p_2}$ are symplectic variety isomorphisms, the same must be true of $\eta$. This proves (i).
		
		We now prove (iv). A first step is to invoke Proposition \ref{Proposition: Image}; it implies that the differential of $\mu_{\mathcal{C}}$ is an isomorphism at each point in $\mu_{\mathcal{C}}^{-1}(\g_{\text{reg}}\cap \g_{\text{ss}})\s \g_{\mathcal{C}}$, where $\g_{\text{ss}}\s\g$ is the locus of semisimple elements. Since moment maps are Poisson morphisms (resp. $G$-equivariant morphisms), requiring $\mu_{\mathcal{C}}$ to be a moment map determines the Poisson bivector field (resp. generating vector fields of the $G$-action) on $\mu_{\mathcal{C}}^{-1}(\g_{\text{reg}}\cap \g_{\text{ss}})\s\g_{\mathcal{C}}$. We also note that $\mu_{\mathcal{C}}^{-1}(\g_{\text{reg}}\cap \g_{\text{ss}})$ is a non-empty open subset of the irreducible variety $\g_{\mathcal{C}}$. It follows that forcing $\mu_{\mathcal{C}}$ to be a moment map determines the Poisson bivector field on $\g_{\mathcal{C}}$, as well as the generating vector fields of the $G$-action on $\g_{\mathcal{C}}$. As $G$ is connected, these generating vector fields determine the $G$-action on $\g_{\mathcal{C}}$. The previous two sentences imply (iv).
	\end{proof}
	
	Recall that the \textit{regular locus} $X_{\text{reg}}$ of a Poisson manifold $X$ is the union of its top-dimensional symplectic leaves. The rank of a Poisson manifold is the supremum of its symplectic leaf dimensions. Returning to the notation of this subsection, we have the following result.
	
	\begin{proposition}\label{Proposition: Nice}
		The following statements are true.
		\begin{itemize}
			\item[\textup{(i)}]We have $(\g_{\mathcal{C}})_{\emph{reg}}=\{(\p,x)\in\g_{\mathcal{C}}:[x]\in\mathfrak{l}(\p)_{\emph{reg}}\}$.
			\item[\textup{(ii)}] The rank of $\g_{\mathcal{C}}$ is $\dim\g-\ell$.
		\end{itemize}
	\end{proposition}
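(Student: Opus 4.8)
The plan is to transport the whole computation to $G\times_P\p$ via the Poisson isomorphism $\delta_{\p}:G\times_P\p\overset{\cong}\longrightarrow\g_{\mathcal{C}}$ of Proposition \ref{Proposition: Canonical}(i), and then read off the symplectic leaves from the reduction picture of Subsection \ref{Subsection: The variety}. Recall from there that $M\coloneqq G\times_{U(P)}\p$ is symplectic, identified with $T^*(G/U(P))$ via $\psi_{\p}$, and that $G\times L(P)$ acts on it in a Hamiltonian fashion with moment map $[g:x]\mapsto(\mathrm{Ad}_g(x),-[x])$. In particular, the $L(P)$-action is free and proper, with moment map $\Phi:M\longrightarrow\mathfrak{l}(\p)$, $\Phi([g:x])=-[x]$, where the Killing form identifies $\mathfrak{l}(\p)^*$ with $\mathfrak{l}(\p)$. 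Since $G\times_P\p=M/L(P)$ is the quotient of $M$ by this free proper Hamiltonian action, it is a Poisson manifold whose symplectic leaves are precisely the reduced spaces $\Phi^{-1}(\mathcal{O})/L(P)$, as $\mathcal{O}$ ranges over the adjoint orbits of $L(P)$ in $\mathfrak{l}(\p)$.

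With this description in hand, I would compute leaf dimensions by the standard reduction count. As $\Phi$ is a submersion and the $L(P)$-action is free, the leaf through $[g:x]$—corresponding to the orbit $\mathcal{O}$ of $-[x]$, which has the same dimension as $L(P)\cdot[x]$—has dimension
\[
\dim M-2\dim\mathfrak{l}(\p)+\dim\mathcal{O}.
\]
This is maximized exactly when $\dim\mathcal{O}$ is as large as possible. The maximal dimension of an adjoint orbit in the reductive Lie algebra $\mathfrak{l}(\p)$ is $\dim\mathfrak{l}(\p)-\mathrm{rank}\,\mathfrak{l}(\p)$, attained precisely when $[x]\in\mathfrak{l}(\p)_{\text{reg}}$; moreover $\mathrm{rank}\,\mathfrak{l}(\p)=\mathrm{rank}\,L(P)=\ell$, as used in Lemma \ref{Lemma: Single orbit}. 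Hence the leaf through $[g:x]$ is top-dimensional if and only if $[x]\in\mathfrak{l}(\p)_{\text{reg}}$. Since $\delta_{\p}$ sends $[g:x]$ to $(\mathrm{Ad}_g(\p),\mathrm{Ad}_g(x))$ and $\mathrm{Ad}_g$ induces an isomorphism $\mathfrak{l}(\p)\overset{\cong}\longrightarrow\mathfrak{l}(\mathrm{Ad}_g(\p))$ carrying $[x]$ to $[\mathrm{Ad}_g(x)]$ and preserving regularity, transporting this characterization through $\delta_{\p}$ yields statement (i).

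For (ii), it remains to evaluate the maximal leaf dimension
\[
\dim M-2\dim\mathfrak{l}(\p)+(\dim\mathfrak{l}(\p)-\ell)=\dim M-\dim\mathfrak{l}(\p)-\ell.
\]
Substituting $\dim M=2(\dim\g-\dim\mathfrak{u}(\p))$ and $\dim\mathfrak{l}(\p)=\dim\p-\dim\mathfrak{u}(\p)$ gives $2\dim\g-\dim\p-\dim\mathfrak{u}(\p)-\ell$. Finally, the parabolic identity $\dim\g=\dim\p+\dim\mathfrak{u}(\p)$—reflecting that the nilradical of $\p$ and that of an opposite parabolic have equal dimension—reduces this to $\dim\g-\ell$.

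The main obstacle is not any single calculation but pinning down the symplectic leaves of the Poisson quotient $M/L(P)$ as the reduced spaces $\Phi^{-1}(\mathcal{O})/L(P)$; once this is justified by Poisson reduction for free proper Hamiltonian actions, the dimension bookkeeping and the identification $\mathrm{rank}\,\mathfrak{l}(\p)=\ell$ proceed routinely.
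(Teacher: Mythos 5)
Your proposal is correct and takes essentially the same route as the paper: both transport the problem to $G\times_P\p$ via $\delta_{\p}$, identify the symplectic leaves of the quotient with the Hamiltonian reductions of $G\times_{U(P)}\p$ by the free $L(P)$-action, and then locate the top-dimensional leaves by maximizing the orbit dimension in $\mathfrak{l}(\p)$, using $\mathrm{rank}\,\mathfrak{l}(\p)=\ell$. The only cosmetic difference is that you use orbit reduction $\Phi^{-1}(\mathcal{O})/L(P)$ together with a purely numerical dimension count, whereas the paper uses point reduction $\nu_2^{-1}([y])/L(P)_{[y]}$ and identifies each leaf explicitly as the associated bundle $G\times_{P_{[y]}}((-y)+\mathfrak{u}(\p))$ — an explicit description it reuses later (e.g.\ in the proof of Theorem \ref{Theorem: Leaf intersection}), but which is not needed for the dimension bookkeeping you carry out.
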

	
	\begin{proof}
		We first prove (i). Suppose that $(\p,x)\in\g_{\mathcal{C}}$, and let $P\s G$ be the parabolic subgroup integrating $\p$. Recall that the canonical isomorphism $\delta_{\p}:G\times_P\p\longrightarrow\g_{\mathcal{C}}$ is one of Hamiltonian $G$-spaces. Since $\delta([e:x])=(\p,x)$, it suffices to prove that $[e:x]\in (G\times_P\p)_{\text{reg}}$ if and only if $[x]\in\mathfrak{l}(\p)_{\text{reg}}$.
		
		Recall that $G\times_{U(P)}\p$ is a symplectic Hamiltonian ($G\times L(P)$)-space with moment map
		$$(\nu_1,\nu_2):G\times_{U(P)}\p\longrightarrow\g\times\mathfrak{l}(\p),\quad [g:y]\mapsto (\mathrm{Ad}_g(y),-[y]).$$ In light of Subsection \ref{Subsection: The variety}, the symplectic leaves of $G\times_P\p$ are the connected components of the Hamiltonian reductions of $G\times_{U(P)}\p$ by $L(P)$. It follows that the symplectic leaves of $G\times_P\p$ are given by $$\nu_2^{-1}([y])/L(P)_{[y]}=G\times_{P_{[y]}}((-y)+\mathfrak{u}(\p))\s G\times_P\p,$$ as $[y]$ ranges over $\mathfrak{l}(\p)$. We conclude that $G\times_{P_{[x]}}(x+\mathfrak{u}(\p))$ is the symplectic leaf of $G\times_P\p$ through $[e:x]$. This implies that $[e:x]\in (G\times_P\p)_{\text{reg}}$ if and only if $[x]$ achieves the minimal $P$-centralizer dimension of vectors in $\mathfrak{l}(\p)$. On the other hand, it is clear that $\dim P_{[x]}=\dim L(P)_{[x]}+\dim U(P)$. One concludes that $\dim P_{[x]}$ is minimal among the $P$-centralizer dimensions of vectors in $\mathfrak{l}(\p)$ if and only if $[x]\in\mathfrak{l}(\p)_{\text{reg}}$. The proof of (i) is therefore complete.
		
		To prove (ii), suppose that $(\p,x)\in(\g_{\mathcal{C}})_{\text{reg}}$. It suffices to prove that the symplectic leaf of $\g_{\mathcal{C}}$ through $(\p,x)$ has dimension $\dim\g-\ell$. In light of the previous paragraph, this is the task of showing the dimension of $G\times_{P_{[x]}}(x+\mathfrak{u}(\p))$ to be $\dim\g-\ell$. We have \begin{align*}\dim (G\times_{P_{[x]}}(x+\mathfrak{u}(\p))) &= \dim G+\dim U(P)-\dim P_{[x]}\\ &=\dim G-\dim L(P)_{[x]}\\
			&=\dim\g-\ell,
		\end{align*}
		where the last line follows from (i).
	\end{proof}
	
	Let $\mathcal{B}$ denote the conjugacy class of Borel subalgebras of $\g$, i.e. the full flag variety of $G$. Note that $\mathfrak{g}_{\mathcal{B}}=\widetilde{\g}$ is the full Grothendieck--Springer resolution of $\g$ \cite{eva-lu:07}. The following well-known result is a special case of the previous result.
	
	\begin{corollary}\label{Corollary: Easy}
		The Poisson variety $\mathfrak{g}_{\mathcal{B}}=\widetilde{\g}$ is regular of rank $\dim\g-\ell$. 
	\end{corollary}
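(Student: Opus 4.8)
The plan is to specialize Proposition \ref{Proposition: Nice} to the conjugacy class $\mathcal{C}=\mathcal{B}$ of Borel subalgebras. First I would observe that if $\mathfrak{b}\in\mathcal{B}$, then the universal Levi factor $\mathfrak{l}(\mathfrak{b})=\mathfrak{b}/\mathfrak{u}(\mathfrak{b})$ is a Cartan subalgebra of $\g$, and hence an abelian Lie algebra of dimension $\ell$. The crucial point is that every element of an abelian Lie algebra is regular: the centralizer of any $[x]\in\mathfrak{l}(\mathfrak{b})$ is all of $\mathfrak{l}(\mathfrak{b})$, whose dimension is $\ell=\mathrm{rank}\hspace{2pt}\mathfrak{l}(\mathfrak{b})$. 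It follows that $\mathfrak{l}(\mathfrak{b})_{\text{reg}}=\mathfrak{l}(\mathfrak{b})$, so the condition $[x]\in\mathfrak{l}(\mathfrak{b})_{\text{reg}}$ appearing in Proposition \ref{Proposition: Nice}(i) is satisfied by every $(\mathfrak{b},x)\in\g_{\mathcal{B}}$.

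Feeding this into Proposition \ref{Proposition: Nice}(i) yields $(\g_{\mathcal{B}})_{\text{reg}}=\g_{\mathcal{B}}$, which is precisely the statement that the Poisson variety $\g_{\mathcal{B}}$ is regular. The assertion about the rank is then immediate from Proposition \ref{Proposition: Nice}(ii), which computes the rank of $\g_{\mathcal{C}}$ to be $\dim\g-\ell$ for every conjugacy class $\mathcal{C}$; no special argument is needed in the Borel case.

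I do not expect any genuine obstacle, as the corollary is a direct specialization of the preceding proposition. The only step requiring a moment of care is the identification of $\mathfrak{l}(\mathfrak{b})$ with a Cartan subalgebra and the resulting observation that abelianness forces the regularity condition to be vacuous; the remaining conclusions are read off verbatim from Proposition \ref{Proposition: Nice}.
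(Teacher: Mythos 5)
Your proof is correct and is essentially the paper's own argument: the paper derives the corollary as a direct specialization of Proposition \ref{Proposition: Nice}, and the detail you supply---that $\mathfrak{l}(\mathfrak{b})=\mathfrak{b}/\mathfrak{u}(\mathfrak{b})$ is abelian of dimension $\ell$ (canonically isomorphic to a Cartan subalgebra, though strictly a quotient rather than a subalgebra of $\g$), so that $\mathfrak{l}(\mathfrak{b})_{\text{reg}}=\mathfrak{l}(\mathfrak{b})$ and the regularity condition in Proposition \ref{Proposition: Nice}(i) is vacuous---is exactly the implicit content of the paper's one-line deduction.
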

	
	\section{TQFTs from Grothendieck--Springer resolutions}\label{Section: TQFTs from}
	We now explain that each partial Grothendieck--Springer resolution $\mu_{\mathcal{C}}:\g_{\mathcal{C}}\longrightarrow\g$ determines a two-dimensional, $\WS$-valued TQFT, in such a way that setting $\mathcal{C}=\{\g\}$ recovers the open Moore--Tachikawa TQFT. Our first steps are to integrate $\g_{\mathcal{C}}$ to a symplectic groupoid $(T^*G)_{\mathcal{C}}\tto\g_{\mathcal{C}}$, and compute the isotropy groups of $(T^*G)_{\mathcal{C}}$; see Subsection \ref{Subsection: The symplectic groupoid}. Subsection \ref{Subsection: Global slices} then establishes that $\mathrm{Kos}_{\mathcal{C}}\coloneqq\mu_{\mathcal{C}}^{-1}(\mathrm{Kos})\s\mathfrak{g}_{\mathcal{C}}$ is an admissible global slice to the pullback groupoid $((T^*G)_{\mathcal{C}})_{\text{reg}}\tto(\g_{\mathcal{C}})_{\text{reg}}$, where $\mathrm{Kos}\s\g$ is a Kostant slice. Using Theorem \ref{Theorem: TQFTs from quasi-symplectic groupoids} and Proposition \ref{Proposition: Explicit}, we obtain an explicit TQFT in $\WS$. Subsection \ref{Subsection: Alterations} concludes with some connections to the Moore--Tachikawa conjecture.
	
	\subsection{The symplectic groupoid $(T^*G)_{\mathcal{C}}$}\label{Subsection: The symplectic groupoid}
	Let $\p\s\g$ be a parabolic subalgebra integrating to a parabolic subgroup $P\s G$. Recall the discussion of the symplectic variety $G\times_{U(P)}\p$ and Poisson variety $G\times_P\p=(G\times_{U(P)}\p)/L(P)$ in Subsection \ref{Subsection: The variety}. In light of Proposition \ref{Proposition: Integration}, we may form the symplectic groupoid
	$$(T^*G)_{\p}\coloneqq\mathrm{Pair}(G\times_{U(P)}\p)\sll{0}L(P)\tto G\times_P\p.$$
	Write $\sss_{\p}:(T^*G)_{\p}\longrightarrow\g_{\mathcal{C}}$ (resp. $\ttt_{\p}:(T^*G)_{\p}\longrightarrow\g_{\mathcal{C}}$) for the result of composing the source (resp. target) of $(T^*G)_{\p}\tto G\times_P\p$ with the Poisson isomorphism $\delta_{\p}:G\times_P\p\longrightarrow\g_{\mathcal{C}}$. It follows that $(T^*G)_{\p}$ is a symplectic groupoid over $\g_{\mathcal{C}}$ with source $\sss_{\p}$ and target $\ttt_{\p}$; groupoid multiplication and inversion are induced from those of the pair groupoid of $G\times_{U(P)}\p$, and composing $\delta_{\p}^{-1}:\g_{\mathcal{C}}\longrightarrow G\times_P\p$ with the unit bisection of $(T^*G)_{\p}\tto G\times_{P}\p$ gives the unit bisection of $(T^*G)_{\p}\tto\g_{\mathcal{C}}$. Straightforward computations also reveal that
	$$\sss_{\p}([[g_1:x_1]:[g_2:x_2]])=(\mathrm{Ad}_{g_1}(\p),\mathrm{Ad}_{g_1}(x_1))\quad\text{and}\quad\ttt_{\p}([[g_1:x_1]:[g_2:x_2]])=(\mathrm{Ad}_{g_2}(\p),\mathrm{Ad}_{g_2}(x_2))$$ for all $[[g_1:x_1]:[g_2:x_2]]\in(T^*G)_{\p}$.
	
	\begin{proposition}\label{Proposition: Second canonical}
		Let $\mathcal{C}$ be a conjugacy class of parabolic subalgebras of $\g$. If $\p_1,\p_2\in\mathcal{C}$, then there is a canonical $G$-equivariant isomorphism
		\begin{equation}\label{Equation: Diag}\begin{tikzcd}[column sep=30pt, row sep=40pt]
				(T^*G)_{\p_1} \ar[rr, "\Psi_{(\p_1,\p_2)}"] \ar[dr, shift left = 0.8ex, "\ttt_{\p_1}"]\ar[dr, shift right = 0.8ex, swap, "\sss_{\p_1}"] & & (T^*G)_{\p_2} \ar[dl, shift left = 0.8ex, "\ttt_{\p_2}"]\ar[dl, shift right = 0.8ex, swap, "\sss_{\p_2}"] \\
				& \g_{\mathcal{C}} &
		\end{tikzcd}\end{equation}
		of symplectic groupoids.
	\end{proposition}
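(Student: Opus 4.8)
The plan is to build $\Psi_{(\p_1,\p_2)}$ by descending the symplectic isomorphism $\eta : G\times_{U(P_1)}\p_1 \xrightarrow{\cong} G\times_{U(P_2)}\p_2$ already constructed in the proof of Proposition \ref{Proposition: Canonical}. Recall that $\eta$ depends on a choice of $h\in G$ with $\Ad_h(\p_1)=\p_2$ and is given by $[g:x]\mapsto [gh^{-1}:\Ad_h(x)]$. My first step is to record two compatibilities of $\eta$. Since $\Ad_h(\p_1)=\p_2$ forces $hP_1h^{-1}=P_2$ and $\Ad_h(\mathfrak{u}(\p_1))=\mathfrak{u}(\p_2)$, conjugation by $h$ induces isomorphisms $c_h:L(P_1)\xrightarrow{\cong}L(P_2)$, $[p]\mapsto[hph^{-1}]$, and $\Ad_h:\mathfrak{l}(\p_1)\xrightarrow{\cong}\mathfrak{l}(\p_2)$. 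A direct computation then shows that $\eta$ is equivariant along $c_h$ and intertwines the $\mathfrak{l}(\p_i)$-valued moment maps $\nu_2^{(i)}:[g:x]\mapsto -[x]$ via $\Ad_h$, that is, $\nu_2^{(2)}\circ\eta = \Ad_h\circ\nu_2^{(1)}$.

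Writing $M_i := G\times_{U(P_i)}\p_i$, the next step is to pass to pair groupoids: $\eta\times\eta$ is a symplectic isomorphism $\mathrm{Pair}(M_1)\to\mathrm{Pair}(M_2)$ of pair groupoids. By the two compatibilities above it is equivariant for the diagonal $L(P_1)$- and $L(P_2)$-actions (via $c_h$) and, since $\Ad_h$ is injective, it carries the zero level set $M_1\times_{\mathfrak{l}(\p_1)}M_1$ of the moment map $(a,b)\mapsto\nu_2^{(1)}(a)-\nu_2^{(1)}(b)$ onto the corresponding set for $\p_2$. As groupoid multiplication on the reductions is induced componentwise from that of the pair groupoids (Proposition \ref{Proposition: Integration}), $\eta\times\eta$ descends to a symplectic groupoid isomorphism $\Psi_{(\p_1,\p_2)}: (T^*G)_{\p_1}\to(T^*G)_{\p_2}$. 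Its $G$-equivariance is inherited directly from that of $\eta$, which is manifest from the formula $[g:x]\mapsto[gh^{-1}:\Ad_h(x)]$ since $G$ acts on the left factor. Finally, the commutative square in the proof of Proposition \ref{Proposition: Canonical} shows that $\Psi_{(\p_1,\p_2)}$ covers $\delta_{\p_2}^{-1}\circ\delta_{\p_1}$ on base spaces; writing the groupoid source as $\mathrm{src}$ and recalling $\sss_{\p}=\delta_{\p}\circ\mathrm{src}$, one gets $\sss_{\p_2}\circ\Psi_{(\p_1,\p_2)} = \delta_{\p_2}\circ(\delta_{\p_2}^{-1}\circ\delta_{\p_1})\circ\mathrm{src} = \delta_{\p_1}\circ\mathrm{src} = \sss_{\p_1}$, and identically for the target.

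The main obstacle — and what makes the word \emph{canonical} meaningful — is independence of the choice of $h$. Because parabolic subalgebras are self-normalizing, the $G$-stabilizer of $\p_1$ is exactly $P_1$, so any two admissible choices differ by right multiplication by some $p\in P_1$. Replacing $h$ by $hp$ changes $\eta$ to $\eta(\,\cdot\,)\cdot[hph^{-1}]$, where $[hph^{-1}]\in L(P_2)$; this I would verify by the short computation $[g(hp)^{-1}:\Ad_{hp}(x)] = \eta([g:x])\cdot[hph^{-1}]$. Since this modification is the \emph{diagonal} $L(P_2)$-action on $\mathrm{Pair}(M_2)$, it is quotiented out in the reduction $\sll{0}L(P_2)$, so $\eta\times\eta$ and its $h$-perturbation descend to the same map $\Psi_{(\p_1,\p_2)}$. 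This is the one point requiring genuine care; the remaining equivariance and groupoid-structure verifications are routine.
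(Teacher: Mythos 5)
Your proposal is correct and follows essentially the same route as the paper: descend $\eta\times\eta$ from $\mathrm{Pair}(G\times_{U(P_1)}\p_1)$ through the reduction by the diagonal Levi action, check compatibility with $\sss_{\p_i}$, $\ttt_{\p_i}$ via $\delta_{\p_2}^{-1}\circ\delta_{\p_1}$, and obtain canonicity by observing that changing $h$ to $hp$ (with $p\in P_1$, using self-normalization of parabolics) perturbs $\eta$ by the constant element $[hph^{-1}]\in L(P_2)$, which the diagonal quotient absorbs -- precisely the paper's computation with $hk^{-1}\in P_2$ in different notation. The only cosmetic difference is that you verify the $c_h$-equivariance and moment-map intertwining of $\eta$ by direct computation, whereas the paper reuses the identification of $\eta$ as the cotangent lift of $\zeta:G/U(P_1)\to G/U(P_2)$ from the proof of Proposition \ref{Proposition: Canonical}.
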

	
	\begin{proof}
		Choose $h\in G$ satisfying $\mathrm{Ad}_h(\p_1)=\p_2$. Let $P_1,P_2\s G$ be the parabolic subgroups integrating $\mathfrak{p}_1,\mathfrak{p}_2$, respectively. It follows that $hP_1h^{-1}=P_2$, and that conjugation by $h$ descends to an algebraic group isomorphism $\phi:L(P_1)\overset{\cong}\longrightarrow L(P_2)$. Let us also recall that $G\times L(P_i)$ acts on $G/U(P_i)$ for $i=1,2$. In this context, we observe that the isomorphism
		$$\zeta:G/U(P_1)\overset{\cong}\longrightarrow G/U(P_2),\quad [g]\mapsto[gh^{-1}]$$
		satisfies the equivariance condition $$\zeta((g,[p])\cdot[k])=(g,\phi([p]))\cdot\zeta([k])$$ for all $(g,[p])\in G\times L(P_1)$ and $[k]\in G/U(P_1)$.
		On the other hand, the proof of Proposition \ref{Proposition: Canonical} explains that $$\eta:G\times_{U(P_1)}\p_1\longrightarrow G\times_{U(P_2)}\p_2,\quad [g:x]\mapsto [gh^{-1}:\mathrm{Ad}_h(x)],\quad [g:x]\in G\times_{U(P_1)}P_1$$ is the isomorphism of cotangent bundles induced by $\zeta$. We conclude that $\eta$ is an isomorphism from the Hamiltonian $(G\times L(P_1))$-variety $G\times_{U(P_1)}\p_1$ to the Hamiltonian $(G\times L(P_2))$-variety $G\times_{U(P_2)}\p_2$, where $L(P_1)$ and $L(P_2)$ are identified via $\phi$. This makes it clear that
		$$\Psi_{(\p_1,\p_2)}:(T^*G)_{\p_1}\longrightarrow (T^*G)_{\p_2},\quad [\alpha_1:\alpha_2]\mapsto [\eta(\alpha_1):\eta(\alpha_2)]$$ is a well-defined, $G$-equivariant symplectic variety isomorphism. It is also clear that \eqref{Equation: Diag} commutes if this isomorphism is taken as the top horizontal arrow, and straightforward to verify that $\Psi_{(\p_1,\p_2)}$ is an isomorphism of symplectic groupoids.
		
		It remains only to prove that $\Psi_{(\p_1,\p_2)}$ does not depend on the choice of $h\in G$ satisfying $\mathrm{Ad}_h(\p_1)=\p_2$. To this end, let $h,k\in G$ be such that $\mathrm{Ad}_h(\p_1)=\p_2$ and $\mathrm{Ad}_k(\p_1)=\p_2$. It follows that $\mathrm{Ad}_{hk^{-1}}(\p_2)=\p_2$, or equivalently that $hk^{-1}\in P_2$. We also have $$(gh^{-1},\mathrm{Ad}_h(x))=(gk^{-1}(hk^{-1})^{-1},\mathrm{Ad}_{hk^{-1}}\mathrm{Ad}_k(x))$$ for all $(g,x)\in G\times\mathfrak{p}_1$. In particular, $(gh^{-1},\mathrm{Ad}_h(x))$ and $(gk^{-1},\mathrm{Ad}_k(x))$ belong to the same $L(P_2)$-orbit in $G\times_{U(P_2)}\p_2$ for all $[g:x]\in G\times_{U(P_1)}\p_1$. This fact forces $\Psi_{(\p_1,\p_2)}$ to be independent of the choice mentioned in the first sentence of this paragraph.
	\end{proof}
	
	We are now equipped to prove Main Theorem \ref{1n9y2fsw}. For the sake of convenience, we restate it below.
	
	\begin{theorem*}
		Let $\mathcal{C}$ be a conjugacy class of parabolic subalgebras of $\mathfrak{g}$. There is a canonical algebraic symplectic groupoid $(T^*G)_{\mathcal{C}}\tto\g_{\mathcal{C}}$ that induces the Poisson structure on $\g_{\mathcal{C}}$.
	\end{theorem*}
	
	Let $\mathcal{C}$ be a conjugacy class of parabolic subalgebras of $\g$. Consider the set $$(T^*G)_{\mathcal{C}}\coloneqq\bigg(\bigsqcup_{\p\in\mathcal{C}}(T^*G)_{\p}\bigg)\Bigm/\sim,$$ where $\sim$ is the equivalence relation defined by $$(\alpha_1\in(T^*G)_{\p_1})\sim(\alpha_2\in(T^*G)_{\p_2})\Longleftrightarrow\alpha_2=\Psi_{(\p_1,\p_2)}(\alpha_1).$$ Main Theorem \ref{1n9y2fsw} now follows from our next result.
	
	\begin{corollary}\label{Corollary: Canonical}
		The set $(T^*G)_{\mathcal{C}}$ has a unique $G$-equivariant symplectic groupoid structure $(T^*G)_{\mathcal{C}}\xbigtoto[\ttt_{\mathcal{C}}]{\sss_{\mathcal{C}}}\g_{\mathcal{C}}$ such that $$\begin{tikzcd}[column sep=30pt, row sep=40pt]
			(T^*G)_{\p} \ar[rr] \ar[dr, shift left = 0.8ex, "\ttt_{\p}"]\ar[dr, shift right = 0.8ex, swap, "\sss_{\p}"] & & (T^*G)_{\mathcal{C}} \ar[dl, shift left = 0.8ex, "\ttt_{\mathcal{C}}"]\ar[dl, shift right = 0.8ex, swap, "\sss_{\mathcal{C}}"] \\
			& \g_{\mathcal{C}} &
		\end{tikzcd}$$
		is a $G$-equivariant symplectic groupoid isomorphism for all $\p\in\mathcal{C}$.
	\end{corollary}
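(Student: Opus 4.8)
The plan is to realize $(T^*G)_{\mathcal{C}}$ as a gluing of the individual groupoids $(T^*G)_{\p}$ along the isomorphisms $\Psi_{(\p_1,\p_2)}$ of Proposition \ref{Proposition: Second canonical}, the coherence of the gluing being encoded in a cocycle condition. Once this condition is in hand, the corollary becomes a purely formal transport-of-structure argument, since Proposition \ref{Proposition: Second canonical} has already done all of the symplectic-geometric work.

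First I would verify the cocycle identities $\Psi_{(\p,\p)}=\mathrm{id}$ and $\Psi_{(\p_2,\p_3)}\circ\Psi_{(\p_1,\p_2)}=\Psi_{(\p_1,\p_3)}$ for all $\p_1,\p_2,\p_3\in\mathcal{C}$. The first follows by taking the conjugating element to lie in $P$, which makes the associated map $\eta$ on $G\times_{U(P)}\p$ the identity. For the second, choose $h_{12},h_{23}\in G$ with $\mathrm{Ad}_{h_{12}}(\p_1)=\p_2$ and $\mathrm{Ad}_{h_{23}}(\p_2)=\p_3$, so that $h_{23}h_{12}$ conjugates $\p_1$ to $\p_3$. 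The explicit formula $[g:x]\mapsto[gh^{-1}:\mathrm{Ad}_h(x)]$ for $\eta$ from the proof of Proposition \ref{Proposition: Canonical} shows that $\eta_{23}\circ\eta_{12}$ is precisely the map $\eta_{13}$ computed with $h=h_{23}h_{12}$; invoking the independence of $\Psi_{(\p_1,\p_3)}$ from the choice of conjugating element (established in Proposition \ref{Proposition: Second canonical}) then gives the identity. These cocycle relations make $\sim$ reflexive, symmetric ($\Psi_{(\p_2,\p_1)}=\Psi_{(\p_1,\p_2)}^{-1}$), and transitive, so $(T^*G)_{\mathcal{C}}$ is a well-defined set, and for each fixed $\p\in\mathcal{C}$ the composite $(T^*G)_{\p}\hookrightarrow\bigsqcup_{\p'\in\mathcal{C}}(T^*G)_{\p'}\longrightarrow(T^*G)_{\mathcal{C}}$ is a bijection.

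Next I would transport structure. Fixing $\p_0\in\mathcal{C}$ and using the bijection $(T^*G)_{\p_0}\overset{\cong}{\longrightarrow}(T^*G)_{\mathcal{C}}$, I endow $(T^*G)_{\mathcal{C}}$ with the algebraic variety structure, $G$-action, symplectic form, groupoid multiplication and inversion, and source and target maps $\sss_{\mathcal{C}},\ttt_{\mathcal{C}}$ to $\g_{\mathcal{C}}$ pushed forward from $(T^*G)_{\p_0}$. The assertion of Proposition \ref{Proposition: Second canonical} that each $\Psi_{(\p_0,\p_0')}$ is a $G$-equivariant isomorphism of symplectic groupoids intertwining the maps to $\g_{\mathcal{C}}$ says exactly that this transported structure is independent of the choice of $\p_0$, and hence that every inclusion $(T^*G)_{\p}\longrightarrow(T^*G)_{\mathcal{C}}$ is a $G$-equivariant symplectic groupoid isomorphism commuting with source and target. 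Uniqueness is then immediate: since the inclusions are bijections that are required to be isomorphisms, each component of the structure on $(T^*G)_{\mathcal{C}}$ is forced to be the pushforward of the corresponding structure on any single $(T^*G)_{\p}$.

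I do not anticipate a genuine obstacle here, as the delicate symplectic and equivariance checks are subsumed by Proposition \ref{Proposition: Second canonical}. The only substantive point is the cocycle (coherence) verification, which the explicit description of $\eta$ reduces to the multiplicativity of $\mathrm{Ad}$ together with the already-established independence of $\Psi$ from the conjugating element.
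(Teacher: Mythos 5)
Your proposal is correct and takes essentially the same route as the paper, whose proof consists of the single line ``this follows immediately from Proposition \ref{Proposition: Second canonical} and the definition of $(T^*G)_{\mathcal{C}}$''; your cocycle verification and transport-of-structure argument are precisely the details the paper leaves implicit in declaring $\sim$ an equivalence relation. One small imprecision worth noting: for a conjugating element $h\in P$ the map $\eta$ on $G\times_{U(P)}\p$ is the identity only when $h\in U(P)$ (e.g.\ $h=e$), since the quotient there is by $U(P)$ rather than $P$ --- but the induced map $\Psi_{(\p,\p)}$ on the $L(P)$-reduction is the identity for any such $h$ (this is exactly the choice-independence in Proposition \ref{Proposition: Second canonical}), so your conclusion stands.
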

	
	\begin{proof}
		This follows immediately from Proposition \ref{Proposition: Second canonical} and the definition of $(T^*G)_{\mathcal{C}}$.
	\end{proof}
	
	Given $(\mathfrak{p},x)\in\g_{\mathcal{C}}$, the isomorphism $(T^*G)_{\mathcal{C}}\cong (T^*G)_{\p}$ restricts to an isomorphism
	\begin{align}\label{Equation: Explicit descriptions}((T^*G)_{\mathcal{C}})_{(\mathfrak{p},x)} & \cong ((T^*G)_{\p})_{[e:x]} \\ & = \left\{[[g:y]:[h:z]]\in \left(G\times_{U(P)}\mathfrak{p})\times_{\mathfrak{l}(\p)}(G\times_{U(P)}\mathfrak{p})\right)/L(P):[g:y]=[e:x]=[h:z]\text{ in }G\times_P\mathfrak{p}\right\}\notag\end{align} between the isotropy groups of $(\mathfrak{p},x)$ and $[e:x]\in G\times_P\mathfrak{p}$.  We use this isomorphism to freely identify the two isotropy groups in our next proposition.
	
	\begin{proposition}\label{Proposition: Isotropy group}
		If $(\mathfrak{p},x)\in\g_{\mathcal{C}}$, then 
		$$L(P)_{[x]}\longrightarrow ((T^*G)_{\mathcal{C}})_{(\mathfrak{p},x)}=\left(\left(G\times_{U(P)}\mathfrak{p})\times_{\mathfrak{l}(\p)}(G\times_{U(P)}\mathfrak{p})\right)/L(P)\right)_{[e:x]},\quad [p]\mapsto [[p:\mathrm{Ad}_{p^{-1}}(x)]:[e:x]]$$
		is a well-defined isomorphism of algebraic groups.
	\end{proposition}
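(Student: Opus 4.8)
The plan is to reduce everything to the single parabolic $\p$ via the isomorphism $(T^*G)_{\mathcal{C}}\cong (T^*G)_{\p}$ of Corollary~\ref{Corollary: Canonical}, and then to work inside the reduced pair groupoid $(T^*G)_{\p}=\mathrm{Pair}(M)\sll{0}L(P)$, where $M\coloneqq G\times_{U(P)}\p$ carries the $L(P)$-moment map $\nu_2([g:y])=-[y]$. Write $m_0\coloneqq[e:x]\in M$ and denote by $\Phi$ the map in the statement. Recalling that the source and target of an isotropy element are computed by applying $\delta_{\p}$ to its two $M$-components, an element $[[g_1:x_1]:[g_2:x_2]]$ lies in $((T^*G)_{\p})_{[e:x]}$ precisely when both $[g_1:x_1]$ and $[g_2:x_2]$ equal $m_0$ in $G\times_P\p$ and $\nu_2$ agrees on the two components; the first condition says the two components lie on the $L(P)$-orbit of $m_0$, while the $\nu_2$-condition cuts this orbit data down.

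First I would check well-definedness and that $\Phi$ lands in the isotropy group. Given $p\in P_{[x]}$ representing $[p]\in L(P)_{[x]}=P_{[x]}/U(P)$, a direct computation in $M=G\times_{U(P)}\p$ shows that the residual $U(P)$-action sends $(pu,\Ad_{(pu)^{-1}}(x))$ to $(p,\Ad_{p^{-1}}(x))$ for $u\in U(P)$, so $[p:\Ad_{p^{-1}}(x)]$ is independent of the chosen representative $p$. Applying the residual $P$-action then shows $[p:\Ad_{p^{-1}}(x)]=[e:x]$ in $G\times_P\p$, so both components of $\Phi([p])$ map to $(\p,x)$ under $\delta_{\p}$; and $[p]\in L(P)_{[x]}$ gives $[\Ad_{p^{-1}}(x)]=[x]$, i.e.\ $\nu_2([p:\Ad_{p^{-1}}(x)])=\nu_2([e:x])$, so $\Phi([p])$ genuinely lies in the fibre product defining $(T^*G)_{\p}$ and hence in the isotropy group.

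Next I would establish bijectivity. Normalizing the second component of an arbitrary isotropy element to $m_0$ by the diagonal $L(P)$-action, one sees that every isotropy element has the form $[[s:\Ad_{s^{-1}}(x)]:[e:x]]$ for some $s\in P$, and the $\nu_2$-matching forces $[s]\in L(P)_{[x]}$; this gives surjectivity of $\Phi$. For injectivity, equality of two normalized elements $\Phi([s]),\Phi([s'])$ as classes modulo the diagonal $L(P)$-action, combined with freeness of the $L(P)$-action on $M$ and the fact that the first coordinate projects injectively to $G/U(P)$, forces $[s]=[s']$ in $L(P)_{[x]}$.

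The crux is multiplicativity. Here I would take $\Phi([p])$ and $\Phi([q])$, adjust their representatives by the diagonal $L(P)$-action so that the source component of one matches the target component of the other, apply the induced pair-groupoid multiplication $\mmm$, and pass back to the $L(P)$-quotient. This is the delicate step: one must keep track of the source/target conventions of Corollary~\ref{Corollary: Canonical}, the direction of the residual action $[q]\cdot[g:y]=[gq^{-1}:\Ad_q(y)]$, and the reduction, and verify both that the product is again of the normalized form and that these conventions yield a homomorphism rather than an anti-homomorphism. Finally, since every map in sight is a morphism of algebraic varieties and both source and target are algebraic groups, the bijective homomorphism produced above is an isomorphism of algebraic groups, using that a bijective morphism of algebraic groups in characteristic zero is an isomorphism.
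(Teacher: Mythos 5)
Your proposal is correct and follows essentially the same route as the paper: reduce to a fixed $\p\in\mathcal{C}$ via Corollary \ref{Corollary: Canonical} and manipulate explicit representatives in $\mathrm{Pair}(G\times_{U(P)}\p)\sll{0}L(P)$. Your surjectivity step (normalize the second component to $[e:x]$ by the diagonal $L(P)$-action, then read $[s]\in L(P)_{[x]}$ off the $\nu_2$-matching condition) is exactly the paper's rewriting $[[g:y]:[h:z]]=[[gh^{-1}:\mathrm{Ad}_{(gh^{-1})^{-1}}(x)]:[e:x]]$ with $gh^{-1}\in P_{[x]}$. The only organizational difference is that the paper defines the map upstairs on $P_{[x]}$, proves surjectivity, and computes the kernel to be exactly $P_{[x]}\cap U(P)=U(P)$ before quotienting, whereas you work directly on $L(P)_{[x]}=P_{[x]}/U(P)$ and replace the kernel computation by well-definedness plus injectivity (via freeness of the $L(P)$-action on $G\times_{U(P)}\p$ and injectivity of the first coordinate into $G/U(P)$). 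These are equivalent in content, and your versions of both checks are sound.

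The one step you describe but do not execute, multiplicativity, is precisely the step the paper dismisses as ``a straightforward exercise,'' so you are at the paper's level of detail there; moreover, your worry about homomorphism versus anti-homomorphism is well founded. With the paper's conventions as literally stated ($gh$ defined iff $\sss(g)=\ttt(h)$; $\sss_{\p}$ reads off the first component and $\ttt_{\p}$ the second; residual action $[l]\cdot[g:v]=[gl^{-1}:\mathrm{Ad}_l(v)]$), acting by $[p^{-1}]$ on the representative of $\Phi([q])$ turns it into $([qp:\mathrm{Ad}_{(qp)^{-1}}(x)],[p:\mathrm{Ad}_{p^{-1}}(x)])$, which is composable with $([p:\mathrm{Ad}_{p^{-1}}(x)],[e:x])$, and the pair-groupoid product is $[[qp:\mathrm{Ad}_{(qp)^{-1}}(x)]:[e:x]]=\Phi([q][p])$; so the stated formula is contravariant in these conventions, and covariant in the opposite composition convention. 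This is a convention artifact rather than a genuine gap: composing with inversion converts the anti-isomorphism into an isomorphism, and nothing downstream is affected since the proposition is only used to conclude that the isotropy groups along $\mathrm{Kos}_{\mathcal{C}}$ are abelian (Theorem \ref{Theorem: Leaf intersection}), a property invariant under anti-isomorphism. If you write this up, carry out the short computation above rather than leaving the hom/anti-hom question open.
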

	
	\begin{proof}
		A straightforward exercise reveals that
		\begin{equation}\label{Equation: Group morphism} P_{[x]}\longrightarrow \left(\left(G\times_{U(P)}\mathfrak{p})\times_{\mathfrak{l}(\p)}(G\times_{U(P)}\mathfrak{p})\right)/L(P)\right)_{[e:x]},\quad p\mapsto [[p:\mathrm{Ad}_{p^{-1}}(x)]:[e:x]]\end{equation} defines an algebraic group morphism. It therefore suffices to prove that \eqref{Equation: Group morphism} is surjective with kernel $U(P)$. To establish surjectivity, suppose that $[[g:y]:[h:z]]\in\left(\left(G\times_{U(P)}\mathfrak{p})\times_{\mathfrak{l}(\p)}(G\times_{U(P)}\mathfrak{p})\right)/L(P)\right)_{[e:x]}$. It follows that $g,h\in P$, $y=\mathrm{Ad}_{g^{-1}}(x)$, and $z=\mathrm{Ad}_{h^{-1}}(x)$. The condition $[y]=[z]\in\mathfrak{l}(\p)$ then implies that $gh^{-1}\in P_{[x]}$, so that
		$$[[g:y]:[h:z]]=[[g:\mathrm{Ad}_{g^{-1}}(x)]:[h:\mathrm{Ad}_{h^{-1}}(x)]=[[gh^{-1}:\mathrm{Ad}_{(gh^{-1})^{-1}}(x)]:[e:x]]$$ must be in the image of \eqref{Equation: Group morphism}.
		
		It remains to prove that $U(P)$ is the kernel of \eqref{Equation: Group morphism}. To this end, note that $p\in P_{[x]}$ belongs to this kernel if and only if
		$[[p:\mathrm{Ad}_{p^{-1}}(x)]:[e:x]]=[[e:x]:[e:x]]$. This is equivalent to the existence of $q\in P$ such that
		$$[[p:\mathrm{Ad}_{p^{-1}}(x)]:[e:x]]=[[q:\mathrm{Ad}_{q^{-1}}(x)]:[q:\mathrm{Ad}_{q^{-1}}(x)]].$$ We may rephrase this as the condition that
		$$(p,\mathrm{Ad}_{p^{-1}}(x))=(qu,\mathrm{Ad}_{(qu)^{-1}}(x))\quad\text{and}\quad (e,x)=(qv,\mathrm{Ad}_{(qv)^{-1}}(x))$$ for some $q\in P$ and $u,v\in U(P)$. Straightforward manipulations and substitutions show that this holds if and only if $$(p,\mathrm{Ad}_{p^{-1}}(x))=(v^{-1}u,\mathrm{Ad}_{(v^{-1}u)^{-1}}(x))$$ for some $u,v\in U(P)$, or equivalently $p\in U(P)$. The kernel of \eqref{Equation: Group morphism} is therefore equal to $P_{[x]}\cap U(P)$. Since $U(P)$ acts trivially on the $P$-module $\mathfrak{l}(\p)$, the kernel must be $U(P)$.
	\end{proof}
	
	\subsection{Global slices to $(T^*G)_{\mathcal{C}}$}\label{Subsection: Global slices}
	Let $\mathcal{C}$ be a conjugacy class of parabolic subalgebras of $\g$. Fix a principal $\mathfrak{sl}_2$-triple $(e,h,f)\in\g^{\times 3}$, and consider the associated \textit{Kostant slice}
	$$\mathrm{Kos}\coloneqq e+\g_f\s\g.$$ One knows that $\mathrm{Kos}$ is a fundamental domain for the adjoint action of $G$ on $\greg$ \cite{kostant}. It is also known to be a Poisson transversal for the Poisson structure on $\g=\g^*$, where the Killing form is used to identify $\g$ and $\g^*$; see \cite{kostant-american,kostant} for the required ingredients, and \cite{gan-ginzburg} for the case of arbitrary Slodowy slices. We conclude that 
	$$\mathrm{Kos}_{\mathcal{C}}\coloneqq\mu_{\mathcal{C}}^{-1}(\mathrm{Kos})\s\g_{\mathcal{C}}$$ is a Poisson transversal. On the other hand, let $((T^*G)_{\mathcal{C}})_{\text{reg}}\tto(\g_{\mathcal{C}})_{\text{reg}}$ denote the pullback of $(T^*G)_{\mathcal{C}}\tto\g_{\mathcal{C}}$ to $(\g_{\mathcal{C}})_{\text{reg}}\s\g_{\mathcal{C}}$. We relate the preceding discussion to the notion of an \textit{admissible global slice}, as defined in Subsection \ref{Subsection: TQFTs}.  
	
	\begin{theorem}\label{Theorem: Leaf intersection}
		The subvariety $\mathrm{Kos}_{\mathcal{C}}$ is an admissible global slice to $((T^*G)_{\mathcal{C}})_{\emph{reg}}\tto(\g_{\mathcal{C}})_{\emph{reg}}$. 
	\end{theorem}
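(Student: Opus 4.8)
The plan is to verify the two defining properties of an admissible global slice in turn: that $\mathrm{Kos}_{\mathcal{C}}$ meets each orbit of $((T^*G)_{\mathcal{C}})_{\text{reg}}\tto(\g_{\mathcal{C}})_{\text{reg}}$ transversely in a single point, and that every isotropy group along $\mathrm{Kos}_{\mathcal{C}}$ is abelian. First I would record that $\mathrm{Kos}_{\mathcal{C}}\s(\g_{\mathcal{C}})_{\text{reg}}$: for $(\p,x)\in\mathrm{Kos}_{\mathcal{C}}$ one has $x\in\mathrm{Kos}\s\greg$, so $x\in(x+\mathfrak{u}(\p))\cap\greg\neq\emptyset$, whence $[x]\in\mathfrak{l}(\p)_{\text{reg}}$ by Lemma \ref{Lemma: Single orbit}(ii) and $(\p,x)\in(\g_{\mathcal{C}})_{\text{reg}}$ by Proposition \ref{Proposition: Nice}(i). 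Because the orbits of the groupoid are exactly the symplectic leaves of $(\g_{\mathcal{C}})_{\text{reg}}$, i.e.\ the top-dimensional leaves of $\g_{\mathcal{C}}$, and $\mathrm{Kos}_{\mathcal{C}}$ is a Poisson transversal (being the preimage of the Poisson transversal $\mathrm{Kos}$ under the Poisson map $\mu_{\mathcal{C}}$), transversality to each orbit is automatic from the definition of a Poisson transversal. The genuine work lies in the singleton and abelian claims.

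For the singleton property I would use the leaf description extracted from the proof of Proposition \ref{Proposition: Nice}: the leaf through $(\p,x)$ is $\{(\Ad_g\p,\Ad_g y):g\in G,\ y\in x+\mathfrak{u}(\p)\}$. Suppose $(\p_1,x_1)$ and $(\p_2,x_2)$ lie in $\mathrm{Kos}_{\mathcal{C}}$ and in a common leaf, so $\p_2=\Ad_g\p_1$ and $x_2=\Ad_g y$ with $y\in x_1+\mathfrak{u}(\p_1)$ for some $g\in G$. Since $x_2$ is regular, so is $y=\Ad_{g^{-1}}x_2$; thus $y\in(x_1+\mathfrak{u}(\p_1))\cap\greg$, which by Lemma \ref{Lemma: Single orbit}(i) is the single $(P_1)_{[x_1]}$-orbit through $x_1$. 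Hence $x_2$ is $G$-conjugate to $x_1$, and as both lie in the Kostant slice they coincide, say $x_1=x_2=:x_0$. Writing $y=\Ad_p x_0$ with $p\in(P_1)_{[x_0]}$, I obtain $gp\in G_{x_0}\s P_1$ by Lemma \ref{Lemma: In Borel} (as $x_0\in\p_1\cap\greg$), forcing $g\in P_1$ and therefore $\p_2=\Ad_g\p_1=\p_1$. For surjectivity onto orbits I would start from any leaf, represent it by $(\p,x)$ with $[x]\in\mathfrak{l}(\p)_{\text{reg}}$, choose $y\in(x+\mathfrak{u}(\p))\cap\greg$ via Lemma \ref{Lemma: Single orbit}(ii), and conjugate $y$ into $\mathrm{Kos}$; the resulting point then lies in the leaf and in $\mathrm{Kos}_{\mathcal{C}}$.

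For admissibility, Proposition \ref{Proposition: Isotropy group} identifies the isotropy group at $(\p,x)$ with the centralizer $L(P)_{[x]}$ of $[x]$ in the reductive group $L(P)$. Since $[x]\in\mathfrak{l}(\p)_{\text{reg}}$, this reduces the claim to the statement that the centralizer of a regular element in a connected reductive group is abelian. I would deduce this from Kostant's theory of regular elements: the Lie algebra $\mathfrak{l}(\p)_{[x]}$ is abelian for regular $[x]$, and the centralizer is connected (the analogue for $L(P)$ of the connectedness cited as \cite[Proposition 14]{kostant}), so $L(P)_{[x]}$ is abelian. If one prefers to avoid asserting connectedness for a merely reductive group, the same conclusion follows by splitting $L(P)=Z(L(P))^\circ\cdot[L(P),L(P)]$, handling the central torus trivially and passing to the simply connected cover of the semisimple part, where connectedness and commutativity of regular centralizers are standard.

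I expect the main obstacle to be the admissibility step --- specifically, pinning down commutativity of $L(P)_{[x]}$ as an algebraic group (not merely its identity component) for regular $[x]$ in the reductive, possibly non-simply-connected, Levi $L(P)$. The leaf-matching argument for the singleton property, by contrast, is essentially a bookkeeping exercise once the leaf description together with Lemmas \ref{Lemma: Single orbit} and \ref{Lemma: In Borel} are in hand, and transversality comes for free from the Poisson-transversal structure already established.
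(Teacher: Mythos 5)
Your proposal is correct and follows essentially the same route as the paper's proof: transversality comes from the Poisson-transversal property of $\mathrm{Kos}_{\mathcal{C}}$ established just before the theorem, the singleton property from the leaf description in Proposition \ref{Proposition: Nice} combined with Lemma \ref{Lemma: Single orbit}, Lemma \ref{Lemma: In Borel}, and the fundamental-domain property of the Kostant slice, and admissibility from Proposition \ref{Proposition: Isotropy group} together with Kostant's abelianness of regular centralizers. The only differences are cosmetic: you run the singleton argument directly in $\g_{\mathcal{C}}$ rather than transporting to $G\times_P\p$ via $\delta_{\p}$ as the paper does, and you add extra (unneeded but harmless) care about applying \cite[Proposition 14]{kostant} to the reductive Levi $L(P)$, which the paper simply cites.
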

	
	\begin{proof}
		We first show $\mathrm{Kos}_{\mathcal{C}}$ to be a global slice to $((T^*G)_{\mathcal{C}})_{\text{reg}}\tto(\g_{\mathcal{C}})_{\text{reg}}$. To this end, note that the orbits of $((T^*G)_{\mathcal{C}})_{\text{reg}}\tto(\g_{\mathcal{C}})_{\text{reg}}$ are precisely the top-dimensional symplectic leaves of $\g_{\mathcal{C}}$. This combines with Proposition \ref{Proposition: Nice} to reduce us to proving the following: $\mathrm{Kos}_{\mathcal{C}}$ has a non-empty intersection with the symplectic leaf of $\mathfrak{g}_{\mathcal{C}}$ through $(\p,x)$ if and only if $[x]\in\mathfrak{l}(\p)_{\text{reg}}$, in which case the intersection is a singleton. By Lemma \ref{Lemma: Single orbit} and Proposition \ref{Proposition: Canonical}, this is equivalent to proving the following: $\mu_{\p}^{-1}(\mathrm{Kos})$ has a non-empty intersection with the symplectic leaf of $G\times_P\p$ through $[e:x]$ if and only if $(x+\mathfrak{u}(\p))\cap\greg\neq\emptyset$, in which case the intersection is a singleton. This leaf is $G\times_{P_{[x]}}(x+\mathfrak{u}(\p))$, as established in the proof of Proposition \ref{Proposition: Nice}. The ``if and only if" assertion now follows immediately from the fact that $\mathrm{Kos}$ is a fundamental domain for the adjoint action of $G$ on $\greg$.
		
		To complete our proof that $\mathrm{Kos}_{\mathcal{C}}$ is a global slice, let $(\p,x)\in\g_{\mathcal{C}}$ be such that $(x+\mathfrak{u}(\p))\cap\greg\neq\emptyset$. Our task is to show that any two elements of $G\times_{P_{[x]}}(x+\mathfrak{u}(\p))\cap\mu_{\p}^{-1}(\mathrm{Kos})$ must coincide. To this end, suppose that $[g,y],[h:z]\in G\times_P(x+\mathfrak{u}(\p))\cap\mu_{\p}^{-1}(\mathrm{Kos})$. It follows that $\mathrm{Ad}_g(y),\mathrm{Ad}_h(z)\in\mathrm{Kos}$, implying that $y,z\in\greg$. By Lemma \ref{Lemma: Single orbit}(i), $z=\mathrm{Ad}_p(y)$ for some $p\in P$. One consequence is that $y$ and $z$ must be conjugate to the same element of $\mathrm{Kos}$, i.e. $\mathrm{Ad}_g(y)=\mathrm{Ad}_h(z)=\mathrm{Ad}_{hp}(y)$. We conclude that $g^{-1}hp\in G_y$. Since Lemma \ref{Lemma: In Borel} now implies that $g^{-1}hp\in P$, we have
		$$[h:z]=[h:\mathrm{Ad}_p(y)]=[hp:y]=[hp(g^{-1}hp)^{-1}:\mathrm{Ad}_{g^{-1}hp}(y)]=[g:y].$$ This establishes that $\mathrm{Kos}_{\mathcal{C}}$ is a global slice to $((T^*G)_{\mathcal{C}})_{\text{reg}}\tto(\g_{\mathcal{C}})_{\text{reg}}$.
		
		It remains only to establish that the global slice $\mathrm{Kos}_{\mathcal{C}}$ is admissible. Given $(\p,x)\in\mathrm{Kos}_{\mathcal{C}}$, the previous two paragraphs imply that $[x]\in\mathfrak{l}(\p)_{\text{reg}}$. We conclude that $L(P)_{[x]}$ is abelian \cite[Proposition 14]{kostant}. By Proposition \ref{Proposition: Isotropy group}, the isotropy group $((T^*G)_{\mathcal{C}})_{(\p,x)}$ is also abelian. It follows that $\mathrm{Kos}_{\mathcal{C}}$ is admissible. 
	\end{proof}
	
	We have the following specialization to the case of the flag variety $\mathcal{B}$ of $\g$. It is presumably well-known to experts. Note that $\g_{\mathcal{B}}=\widetilde{\g}$ is the full Grothendieck--Springer resolution in this case. 
	
	\begin{corollary}\label{Corollary: Borel}
		The subvariety $\mathrm{Kos}_{\mathcal{B}}$ is an admissible global slice to $(T^*G)_{\mathcal{B}}\tto\g_{\mathcal{B}}$.
	\end{corollary}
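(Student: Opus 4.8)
The plan is to deduce this directly from Theorem \ref{Theorem: Leaf intersection} by observing that, in the Borel case, the regular locus exhausts all of $\g_{\mathcal{B}}$. First I would invoke Proposition \ref{Proposition: Nice}(i), which identifies $(\g_{\mathcal{B}})_{\text{reg}}$ with the set of pairs $(\p,x)\in\g_{\mathcal{B}}$ satisfying $[x]\in\mathfrak{l}(\p)_{\text{reg}}$. When $\p$ is a Borel subalgebra, its universal Levi factor $\mathfrak{l}(\p)=\p/\mathfrak{u}(\p)$ is a Cartan subalgebra, so it is abelian and every one of its elements is regular; equivalently, Lemma \ref{Lemma: Single orbit}(iii) guarantees $(x+\mathfrak{u}(\p))\cap\greg\neq\emptyset$ for all $x\in\p$, which by Lemma \ref{Lemma: Single orbit}(ii) forces $[x]\in\mathfrak{l}(\p)_{\text{reg}}$ unconditionally. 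Hence $(\g_{\mathcal{B}})_{\text{reg}}=\g_{\mathcal{B}}$, as already recorded in Corollary \ref{Corollary: Easy}.

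The second step is to propagate this equality of base spaces to the total spaces of the groupoids. Since $((T^*G)_{\mathcal{B}})_{\text{reg}}\tto(\g_{\mathcal{B}})_{\text{reg}}$ is by definition the pullback of $(T^*G)_{\mathcal{B}}\tto\g_{\mathcal{B}}$ along the inclusion $(\g_{\mathcal{B}})_{\text{reg}}\hookrightarrow\g_{\mathcal{B}}$, and this inclusion is now the identity, the pullback groupoid coincides with $(T^*G)_{\mathcal{B}}\tto\g_{\mathcal{B}}$ itself. In particular $\mathrm{Kos}_{\mathcal{B}}\s(\g_{\mathcal{B}})_{\text{reg}}=\g_{\mathcal{B}}$, so there is no discrepancy between the slice as a subvariety of the regular locus and as a subvariety of $\g_{\mathcal{B}}$.

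Finally, I would apply Theorem \ref{Theorem: Leaf intersection} with $\mathcal{C}=\mathcal{B}$: it asserts that $\mathrm{Kos}_{\mathcal{B}}$ is an admissible global slice to $((T^*G)_{\mathcal{B}})_{\text{reg}}\tto(\g_{\mathcal{B}})_{\text{reg}}$, and under the identification of the previous step this is precisely the claim that $\mathrm{Kos}_{\mathcal{B}}$ is an admissible global slice to $(T^*G)_{\mathcal{B}}\tto\g_{\mathcal{B}}$. I do not expect any substantive obstacle here; the entire content of the corollary is the observation that the Borel case is the unique one in which no passage to a proper regular locus is required, so that Theorem \ref{Theorem: Leaf intersection} applies verbatim to the full groupoid.
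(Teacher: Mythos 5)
Your proof is correct and follows essentially the same route as the paper, which deduces the corollary immediately from Corollary \ref{Corollary: Easy} (regularity of $\g_{\mathcal{B}}$, i.e.\ $(\g_{\mathcal{B}})_{\text{reg}}=\g_{\mathcal{B}}$) together with Theorem \ref{Theorem: Leaf intersection}. You merely unpack why $(\g_{\mathcal{B}})_{\text{reg}}=\g_{\mathcal{B}}$ via Lemma \ref{Lemma: Single orbit} and Proposition \ref{Proposition: Nice}(i), which is exactly the content already recorded in Corollary \ref{Corollary: Easy}.
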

	
	\begin{proof}
		This follows immediately from Corollary \ref{Corollary: Easy} and Theorem \ref{Theorem: Leaf intersection}.
	\end{proof}
	
	\subsection{Grothendieck--Springer alterations of the Moore--Tachikawa TQFT}\label{Subsection: Alterations}
	
	Recall that Main Theorem \ref{612eq1e1} is the following assertion.
	
	\begin{theorem*}
		Let $\mathcal{C}$ be a conjugacy class of parabolic subalgebras of $\g$. There is an explicit TQFT $\eta : \Cob_2\longrightarrow\WS$ satisfying $\eta(S^1) = ((T^*G)_{\mathcal{C}})\reg$ and  $\eta(C_{m,n})=[(T^*G)_{\mathcal{C}}^{m,n}]$ for all $(m,n)\neq (0,0)$, where $C_{m,n}$ denotes the genus-0 cobordism from $m$ circles to $n$ circles.
	\end{theorem*}
	
	The proof is now straightforward. Let $\mathcal{C}$ be a conjugacy class of parabolic subalgebras of $\g$. As $(\g_{\mathcal{C}})_{\text{reg}}$ is Poisson, Theorem \ref{Theorem: Leaf intersection} implies that $\G=((T^*G)_{\mathcal{C}})_{\text{reg}}$ and $S=\mathrm{Kos}_{\mathcal{C}}$ satisfy the hypotheses of Theorem \ref{Theorem: TQFTs from quasi-symplectic groupoids}. Write $\eta_{\mathcal{C}}:\Cob_2\longrightarrow\WS$ for the resulting TQFT; it satisfies Main Theorem \ref{612eq1e1}. Proposition \ref{Proposition: Explicit} gives an explicit variety
	$(T^*G)_{\mathcal{C}}^{m,n}\coloneqq \eta_{\mathcal{C}}(C_{m,n})$ for $(m,n)\neq (0,0)$.
	The following two special cases warrant further discussion. 
	
	\begin{itemize}
		\item[\textup{(i)}] If $\mathcal{C}=\{\g\}$, then $\eta_{\mathcal{C}}$ is the \textit{open Moore--Tachikawa TQFT} constructed in \cite{cro-may2:24}. The varieties $$(T^*G)^{m,n}\coloneqq (T^*G)_{\mathcal{C}}^{m,n}$$ are sometimes called \textit{open Moore--Tachikawa varieties}, and feature in several recent works \cite{crooks-mayrand,bie:23,gin-kaz:23,cro-may2:24}. One may affinize these varieties to obtain a scheme-theoretic version of the Moore--Tachikawa TQFT \cite{gin-kaz:23,cro-may2:24}.  
		\item[\textup{(ii)}] Suppose that $\mathcal{C}=\mathcal{B}$ is the conjugacy class of Borel subalgebras. By Corollary \ref{Corollary: Borel}, $\mathrm{Kos}_{\mathcal{C}}=\mathrm{Kos}_{\mathcal{B}}$ is an admissible global slice to the entire symplectic groupoid $(T^*G)_{\mathcal{C}}=(T^*G)_{\mathcal{B}}$.
	\end{itemize}
	
	Fix a pair of non-negative integers $(m,n)\neq (0,0)$. One might seek relationships among the varieties $(T^*G)_{\mathcal{C}}^{m,n}$, as $\mathcal{C}$ ranges over the conjugacy classes of parabolic subalgebras of $\g$. Section \ref{Section: Main} addresses this issue. 
	
	\section{Lagrangian relations and TQFTs}\label{Section: Main}
	
	We now prove Main Theorem \ref{Theorem: Main Theorem 3}. In the interest of clarity, we restate this theorem below.
	
	\begin{theorem*}
		Fix $(m,n)\in(\mathbb{Z}_{\geq 0})^2$ with $(m,n)\neq (0,0)$, and let $\mathcal{C}$ be a conjugacy class of parabolic subalgebras of $\mathfrak{g}$. There is an explicit Lagrangian relation from $(T^*G)_{\mathcal{C}}^{m,n}$ to $(T^*G)^{m,n}$.
	\end{theorem*}
	
	A preparatory result is derived in Subsection \ref{Subsection: A useful result}. The proof of Main Theorem \ref{Theorem: Main Theorem 3} constitutes Subsection \ref{Subsection: Proof}. 
	
	\subsection{A useful result}\label{Subsection: A useful result} Let $\mathcal{C}$ be a conjugacy class of parabolic subalgebras of $\g$. Suppose that $\p\in\mathcal{C}$, and let $P\s G$ be the parabolic subgroup integrating $\p$. Recall from Subsection \ref{Subsection: The variety} that $G\times_{U(P)}\p$ is a symplectic Hamiltonian $G\times L(P)$-variety. Let $\phi_{\p}:G\times_{U(P)}\p\longrightarrow\g$ denote the moment map for the Hamiltonian action of $G=G\times\{e\}\s G\times L(P)$. On the other hand, recall that $G\times_P\p=(G\times_{U(P)}\p)/L(P)$ is a Poisson Hamiltonian $G$-variety with moment map $\mu_{\p}$. One has $\mathcal{L}_{\mu_{\p}}=\mathcal{L}_{\overline{\phi_{\p}}}$ in the notation of Subsection \ref{Subsection: Isotropic}. Note that $\mathcal{L}_{\mu_{\p}}$ is a Lagrangian subgroupoid of $(\mathrm{Pair}(G\times_{U(P)}\p)\sll{0}L(P))\times\overline{T^*G}=(T^*G)_{\p}\times\overline{T^*G}$, and that Lemma \ref{jmzayi83} gives information about Lagrangian intersections with $\mathcal{L}_{\mu_{\p}}$. 
	
	Use the isomorphism in Corollary \ref{Corollary: Canonical} to identify $(T^*G)_{\mathcal{C}}$ with $(T^*G)_{\p}$, and Proposition \ref{Proposition: Canonical}(i) to identify $\g_{\mathcal{C}}$ with $G\times_P\p$. Lemma \ref{jmzayi83} then implies the following about $\mathcal{L}_{\mu_{\mathcal{C}}}$.
	
	\begin{proposition}\label{Proposition: Lagrangian subvariety}
		Let $\sss:T^*G\longrightarrow\g$ and $\ttt:T^*G\longrightarrow\g$ denote the source and target of $T^*G$, respectively. The intersections $(\mathcal{L}_{\mu_{\mathcal{C}}})\cap(\sss_{\mathcal{C}}^{-1}(\mathrm{Kos}_{\mathcal{C}})\times\sss^{-1}(\mathrm{Kos}))$ and $(\mathcal{L}_{\mu_{\mathcal{C}}})\cap(\ttt_{\mathcal{C}}^{-1}(\mathrm{Kos}_{\mathcal{C}})\times\ttt^{-1}(\mathrm{Kos}))$ are Lagrangian subvarieties of $\sss_{\mathcal{C}}^{-1}(\mathrm{Kos}_{\mathcal{C}})\times\overline{\sss^{-1}(\mathrm{Kos})}$ and $\ttt_{\mathcal{C}}^{-1}(\mathrm{Kos}_{\mathcal{C}})\times\overline{\ttt^{-1}(\mathrm{Kos})}$, respectively.
	\end{proposition}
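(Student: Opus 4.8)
The plan is to obtain the target statement as a direct application of Lemma \ref{jmzayi83} with the Poisson transversal $S = \mathrm{Kos}$, and then to deduce the source statement from it by means of the groupoid inversion. First I would invoke the identifications set up immediately before the proposition: Corollary \ref{Corollary: Canonical} identifies $(T^*G)_{\mathcal{C}}$ with $(T^*G)_{\p}$ as symplectic groupoids, Proposition \ref{Proposition: Canonical}(i) identifies $\g_{\mathcal{C}}$ with $G\times_P\p$, and under these $\mathcal{L}_{\mu_{\mathcal{C}}}$ corresponds to $\mathcal{L}_{\mu_{\p}} = \mathcal{L}_{\overline{\phi_{\p}}}$. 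This places us in the situation of Lemma \ref{jmzayi83}, with $\G = T^*G$, the auxiliary Lie group $L(P)$, the Hamiltonian space $M = G\times_{U(P)}\p$, and the descended moment map $\overline{\phi_{\p}} = \mu_{\p}$ on $M/L(P) = G\times_P\p$. Since $\mathrm{Kos}$ is a Poisson transversal in $\g$ (Subsection \ref{Subsection: Global slices}) and $\mu_{\p}^{-1}(\mathrm{Kos})$ corresponds to $\mathrm{Kos}_{\mathcal{C}}$ under $\delta_{\p}$, taking $S = \mathrm{Kos}$ in Lemma \ref{jmzayi83} gives at once that $\ttt_{\mathcal{C}}^{-1}(\mathrm{Kos}_{\mathcal{C}}) \times \ttt^{-1}(\mathrm{Kos})$ is a symplectic submanifold and that its intersection with $\mathcal{L}_{\mu_{\mathcal{C}}}$ is Lagrangian. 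This is precisely the target assertion.

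For the source assertion, I would use the inversion maps $\iota$ on $(T^*G)_{\mathcal{C}}$ and on $T^*G$. As for any symplectic groupoid, each $\iota$ is anti-symplectic and satisfies $\sss\circ\iota = \ttt$ and $\ttt\circ\iota = \sss$. The product $\Phi = \iota\times\iota$ on $(T^*G)_{\mathcal{C}} \times \overline{T^*G}$ is the inversion of the product groupoid, so it preserves the Lagrangian subgroupoid $\mathcal{L}_{\mu_{\mathcal{C}}}$; and a short sign computation (the negated form on the $\overline{T^*G}$ factor offsetting the relation $\iota^*\omega = -\omega$ there) shows $\Phi$ to be anti-symplectic for the product symplectic structure. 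Because $\sss\circ\iota = \ttt$, the map $\Phi$ sends $\sss_{\mathcal{C}}^{-1}(\mathrm{Kos}_{\mathcal{C}}) \times \sss^{-1}(\mathrm{Kos})$ bijectively onto $\ttt_{\mathcal{C}}^{-1}(\mathrm{Kos}_{\mathcal{C}}) \times \ttt^{-1}(\mathrm{Kos})$; in particular the former is a symplectic submanifold, and $\Phi$ restricts to an anti-symplectomorphism between the two carrying $\mathcal{L}_{\mu_{\mathcal{C}}}\cap(\sss_{\mathcal{C}}^{-1}(\mathrm{Kos}_{\mathcal{C}}) \times \sss^{-1}(\mathrm{Kos}))$ onto $\mathcal{L}_{\mu_{\mathcal{C}}}\cap(\ttt_{\mathcal{C}}^{-1}(\mathrm{Kos}_{\mathcal{C}}) \times \ttt^{-1}(\mathrm{Kos}))$. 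Since anti-symplectomorphisms preserve Lagrangian submanifolds and the target intersection is Lagrangian by the previous paragraph, the source intersection is Lagrangian as well.

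The step I expect to require the most care is the sign bookkeeping for $\Phi$ on the opposite factor $\overline{T^*G}$: one must confirm that negating the symplectic form there, combined with $\iota^*\omega = -\omega$, yields an anti-symplectic (rather than symplectic) $\Phi$. Fortunately this distinction is immaterial to the conclusion, since Lagrangian submanifolds are preserved under both symplecto- and anti-symplectomorphisms; what is genuinely needed is only the identity $\sss\circ\iota = \ttt$, which guarantees that $\Phi$ interchanges the source and target fiber products. As an alternative to the inversion argument, one could reprove Lemma \ref{jmzayi83} verbatim with $\sss$ in place of $\ttt$ throughout — the dimension count there is manifestly symmetric in source and target — but the inversion argument avoids duplicating that computation.
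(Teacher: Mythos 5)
Your proposal is correct and takes essentially the same route as the paper, whose entire proof consists of the identifications $(T^*G)_{\mathcal{C}}\cong(T^*G)_{\p}$ (Corollary \ref{Corollary: Canonical}) and $\g_{\mathcal{C}}\cong G\times_P\p$ (Proposition \ref{Proposition: Canonical}(i)) followed by an appeal to Lemma \ref{jmzayi83} with $S=\mathrm{Kos}$. The paper states that lemma only in the target version and silently invokes the source analogue, so your inversion argument --- $\iota\times\iota$ is anti-symplectic on $(T^*G)_{\mathcal{C}}\times\overline{T^*G}$, exchanges $\sss_{\mathcal{C}}^{-1}(\mathrm{Kos}_{\mathcal{C}})\times\sss^{-1}(\mathrm{Kos})$ with $\ttt_{\mathcal{C}}^{-1}(\mathrm{Kos}_{\mathcal{C}})\times\ttt^{-1}(\mathrm{Kos})$, and preserves $\mathcal{L}_{\mu_{\mathcal{C}}}$ because a subgroupoid is closed under inversion --- is a sound way to supply the half the paper leaves implicit, as is your alternative of rerunning the lemma's manifestly source/target-symmetric dimension count with $\sss$ in place of $\ttt$.
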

	
	\subsection{Proof of Main Theorem \ref{Theorem: Main Theorem 3}}\label{Subsection: Proof} 
	To prepare for what follows, let $X$ and $Y$ be symplectic varieties. Our convention is to define a \textit{Lagrangian} relation from $X$ to $Y$ to be an immersed Lagrangian submanifold $L\s X\times\overline{Y}$. We sometimes adopt the notation $L:X\Longrightarrow Y$ for a Lagrangian relation $L$ from $X$ to $Y$. If $L:X\Longrightarrow Y$ and $M:Y\Longrightarrow Z$ are Lagrangian relations between symplectic varieties $X$, $Y$, and $Z$, then $L$ and $M$ may be composed as set-theoretic relations. Let $\Delta(Y)\s Y\times Y$ denote the diagonal copy of $Y$. We call $M$ and $N$ \textit{composable} if $L\times M$ are $X\times\Delta(Y)\times Z$ are transverse in $X\times Y\times Y\times Z$, and the projection map $X\times Y\times Y\times Z\longrightarrow X\times Z$ restricts to an immersion $(L\times M)\cap (X\times\Delta(Y)\times Z)\longrightarrow X\times Z$. The image of this projection is then Lagrangian relation $M\circ L:X\Longrightarrow Z$ \cite[Proposition 5.28]{Bates}. We call $L$ and $M$ \textit{strongly composable} if the map $(L\times M)\cap (X\times\Delta(Y)\times Z)\longrightarrow X\times Z$ is an embedding of manifolds. In this case, $M\circ L$ is a Lagrangian submanifold of $X\times\overline{Y}$.
	
	We now return to the matter at hand. Let $\mathcal{C}$ be a conjugacy class of parabolic subalgebras of $\g$. Recall that $\mathrm{Kos}_{\mathcal{C}}\coloneqq\mu_{\mathcal{C}}^{-1}(\mathrm{Kos})$ for $\mathrm{Kos}\coloneqq e+\g_f\s\g$ the Kostant slice associated to a principal $\mathfrak{sl}_2$-triple $(e,h,f)\in\g^{\times 3}$. We may specialize the end of Subsection \ref{Subsection: TQFTs} to $\G=(T^*G)_{\mathcal{C}}$ and $S=\mathrm{Kos}_{\mathcal{C}}$. Given $m,n\in\mathbb{Z}_{\geq 0}$ with $(m,n)\neq (0,0)$, this specialization yields a pre-Poisson subvariety
	$$\mathrm{Kos}_{\mathcal{C}}^{m,n}\coloneqq\{(\alpha_1,\ldots,\alpha_{m+n},\beta_1,\ldots,\beta_{m+n})\in\mathfrak{g}_{\mathcal{C}}^{m+n}\times\mathfrak{g}_{\mathcal{C}}^{m+n}:\alpha_{n+1}=\cdots=\alpha_{m+n}=\beta_1=\cdots=\beta_n\in\mathrm{Kos}_{\mathcal{C}}\}$$ and abelian stabilizer subgroupoid $\mathcal{H}_{\mathrm{Kos}_{\mathcal{C}}^{m,n}}\longrightarrow\mathrm{Kos}_{\mathcal{C}}^{m,n}$. On the other hand, consider the TQFT $\eta_{\mathcal{C}}:\Cob_2\longrightarrow\WS$ introduced in Subsection \ref{Subsection: Alterations}, and set $(T^*G)_{\mathcal{C}}^{m,n}\coloneqq \eta_{\mathcal{C}}(C_{m,n})$ for $m,n\in\mathbb{Z}_{\geq 0}$ with $(m,n)\neq (0,0)$. The end of Subsection \ref{Subsection: TQFTs} implies that
	$$(T^*G)_{\mathcal{C}}^{m,n}\coloneqq((T^*G)_{\mathcal{C}})_{\mathrm{Kos}_{\mathcal{C}}}^{m,n}=(T^*G)_{\mathcal{C}}^{m+n}\sll{\mathrm{Kos}_{\mathcal{C}}^{m,n},\mathcal{H}_{\mathrm{Kos}_{\mathcal{C}}^{m,n}}}\left((T^*G)_{\mathcal{C}}^{m+n}\times\overline{(T^*G)_{\mathcal{C}}^{m+n}}\right).$$	
	
	Now observe that $\mathrm{Kos}_{\mathcal{C}}^{m,n}$ is coisotropic in the Poisson transversal $\g_{\mathcal{C}}^n\times\mathrm{Kos}_{\mathcal{C}}^{m+n}\times\g_{\mathcal{C}}^m\s\g_{\mathcal{C}}^{m+n}\times\overline{\g_{\mathcal{C}}^{m+n}}$. The preimage of this Poisson transversal under $$\nu_{\mathcal{C}}^{m,n}:(\underbrace{\sss_{\mathcal{C}},\ldots,\sss_{\mathcal{C}}}_{m+n\text{ times}},\underbrace{\ttt_{\mathcal{C}},\ldots,\ttt_{\mathcal{C}}}_{m+n\text{ times}}):(T^*G)_{\mathcal{C}}^{m+n}\longrightarrow\g_{\mathcal{C}}^{m+n}\times\overline{\g_{\mathcal{C}}^{m+n}}$$ is $((T^*G)_{\mathcal{C}}^{0,1})^n\times((T^*G)_{\mathcal{C}}^{1,0})^m\s(T^*G)_{\mathcal{C}}^{m+n}$. The previous two sentences combine with Lemma \ref{Lemma: Lagrangian correspondence}(iii) to imply that
	$$\Lambda_{\mathcal{C}}^{m,n}\coloneqq\{([(\alpha_1,\ldots,\alpha_{m+n})],(\alpha_1,\ldots,\alpha_{m+n})):(\alpha_1,\ldots,\alpha_{m+n})\in(\nu_{\mathcal{C}}^{m,n})^{-1}(\mathrm{Kos}_{\mathcal{C}}^{m,n})\}$$ is a Lagrangian subvariety of $(T^*G)_{\mathcal{C}}^{m,n}\times\overline{((T^*G)_{\mathcal{C}}^{0,1})^n\times((T^*G)_{\mathcal{C}}^{1,0})^m}$, i.e.
	$$\Lambda_{\mathcal{C}}^{m,n}:(T^*G)_{\mathcal{C}}^{m,n}\Longrightarrow ((T^*G)_{\mathcal{C}}^{0,1})^n\times((T^*G)_{\mathcal{C}}^{1,0})^m.$$
	On the other hand, Proposition \ref{Proposition: Lagrangian subvariety} implies that $$\Gamma_{\mathcal{C}}^{m,n}\coloneqq\left\{(\alpha_1,\ldots,\alpha_{m+n},\beta_1,\ldots,\beta_{m+n}):\substack{(\alpha_j,\beta_j)\in((T^*G)_{\mathcal{C}}^{0,1}\times(T^*G)^{0,1})\cap\mathcal{L}_{\mu_{\mathcal{C}}}\text{ for all }j\in\{1,\ldots,n\}\\ (\alpha_k,\beta_k)\in((T^*G)_{\mathcal{C}}^{1,0}\times(T^*G)^{1,0})\cap\mathcal{L}_{\mu_{\mathcal{C}}}\text{ for all }k\in\{n+1,\ldots,m+n\}}\right\}$$ is a Lagrangian subvariety of $((T^*G)_{\mathcal{C}}^{0,1})^n\times((T^*G)_{\mathcal{C}}^{1,0})^m\times\overline{((T^*G)^{0,1})^n\times((T^*G)^{1,0})^m}$, i.e.
	$$\Gamma_{\mathcal{C}}^{m,n}:((T^*G)_{\mathcal{C}}^{0,1})^n\times((T^*G)_{\mathcal{C}}^{1,0})^m\Longrightarrow ((T^*G)^{0,1})^n\times((T^*G)^{1,0})^m.$$
	The following result shows that these two Lagrangian relations are strongly composable.
	
	\begin{theorem}\label{Theorem: Technical}
		Suppose that $m,n\in\mathbb{Z}_{\geq 0}$ and $(m,n)\neq (0,0)$.
		\begin{itemize}
			\item[\textup{(i)}] The subvarieties $$\Lambda_{\mathcal{C}}^{m,n}\times\Gamma_{\mathcal{C}}^{m,n}$$ and $$(T^*G)_{\mathcal{C}}^{m,n}\times\Delta\left(((T^*G)_{\mathcal{C}}^{0,1})^n\times((T^*G)_{\mathcal{C}}^{1,0})^m\right)\times((T^*G)^{0,1})^n\times((T^*G)^{1,0})^m$$ are transverse in $$(T^*G)_{\mathcal{C}}^{m,n}\times((T^*G)_{\mathcal{C}}^{0,1})^n\times((T^*G)_{\mathcal{C}}^{1,0})^m\times((T^*G)_{\mathcal{C}}^{0,1})^n\times((T^*G)_{\mathcal{C}}^{1,0})^m\times((T^*G)^{0,1})^n\times((T^*G)^{1,0})^m.$$
			\item[\textup{(ii)}] The projection of $$(T^*G)_{\mathcal{C}}^{m,n}\times((T^*G)_{\mathcal{C}}^{0,1})^n\times((T^*G)_{\mathcal{C}}^{1,0})^m\times((T^*G)_{\mathcal{C}}^{0,1})^n\times((T^*G)_{\mathcal{C}}^{1,0})^m\times((T^*G)^{0,1})^n\times((T^*G)^{1,0})^m$$ to the first and last two factors $$(T^*G)_{\mathcal{C}}^{m,n}\times((T^*G)^{0,1})^n\times((T^*G)^{1,0})^m$$ restricts to an algebraic, locally closed embedding of $$(\Lambda_{\mathcal{C}}^{m,n}\times\Gamma_{\mathcal{C}}^{m,n})\cap((T^*G)_{\mathcal{C}}^{m,n}\times\Delta(((T^*G)_{\mathcal{C}}^{0,1})^n\times((T^*G)_{\mathcal{C}}^{1,0})^m)\times((T^*G)^{0,1})^n\times((T^*G)^{1,0})^m)$$ into $(T^*G)_{\mathcal{C}}^{m,n}\times((T^*G)^{0,1})^n\times((T^*G)^{1,0})^m$. 
		\end{itemize}
	\end{theorem}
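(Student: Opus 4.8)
The plan is to run the entire argument in the explicit model furnished by Corollary~\ref{Corollary: Canonical} and Proposition~\ref{Proposition: Canonical}(i). Fix $\p\in\mathcal{C}$ with parabolic subgroup $P\s G$, identify $(T^*G)_{\mathcal{C}}$ with $(T^*G)_{\p}=\mathrm{Pair}(G\times_{U(P)}\p)\sll{0}L(P)$ and $\g_{\mathcal{C}}$ with $G\times_P\p$, and recall from Subsection~\ref{Subsection: A useful result} that
$$\mathcal{L}_{\mu_{\mathcal{C}}}=\{([g\cdot p,p],(g,\phi_{\p}(p))):g\in G,\ p\in G\times_{U(P)}\p\}$$
is a Lagrangian \emph{subgroupoid} of $(T^*G)_{\mathcal{C}}\times\overline{T^*G}$. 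Writing $W\coloneqq((T^*G)^{0,1})^n\times((T^*G)^{1,0})^m$, I would first parametrize the intersection
$$\mathcal{I}\coloneqq(\Lambda_{\mathcal{C}}^{m,n}\times\Gamma_{\mathcal{C}}^{m,n})\cap\big((T^*G)_{\mathcal{C}}^{m,n}\times\Delta(\cdots)\times W\big)$$
by the tuples $([\vec\alpha],\vec\alpha,\vec\alpha,\vec\beta)$ in which $\vec\alpha\in(\nu_{\mathcal{C}}^{m,n})^{-1}(\mathrm{Kos}_{\mathcal{C}}^{m,n})$, each $(\alpha_j,\beta_j)\in\mathcal{L}_{\mu_{\mathcal{C}}}$, and the diagonal forces the two middle copies of $\vec\alpha$ to agree. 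Both (i) and (ii) then reduce to properties of the projection $\varpi:\mathcal{I}\longrightarrow(T^*G)_{\mathcal{C}}^{m,n}\times W$, $([\vec\alpha],\vec\alpha,\vec\alpha,\vec\beta)\mapsto([\vec\alpha],\vec\beta)$, together with a tangent-space count; the constructions behind $\Lambda_{\mathcal{C}}^{m,n}$ and $\Gamma_{\mathcal{C}}^{m,n}$ (Lemma~\ref{Lemma: Lagrangian correspondence} and Proposition~\ref{Proposition: Lagrangian subvariety}) already present $\mathcal{I}$ as a smooth variety.

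The conceptual heart is (ii), namely that $\varpi$ is an algebraic locally closed embedding, and the mechanism is the subgroupoid structure of $\mathcal{L}_{\mu_{\mathcal{C}}}$. For injectivity, suppose two points of $\mathcal{I}$ have the same image. Their $\vec\beta$-coordinates agree, and $[\vec\alpha]=[\vec\alpha']$ in the reduced space; since $\mathrm{Kos}_{\mathcal{C}}$ is an \emph{admissible} global slice (Theorem~\ref{Theorem: Leaf intersection}), the two representatives differ by the stabilizer subgroupoid $\mathcal{H}_{\mathrm{Kos}_{\mathcal{C}}^{m,n}}$, so componentwise $\alpha'_j=c_j\alpha_j$ for isotropy elements $c_j$ in the abelian groups $((T^*G)_{\mathcal{C}})_{x_0}$ at the common slice value $x_0\in\mathrm{Kos}_{\mathcal{C}}$. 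Both $(\alpha_j,\beta_j)$ and $(c_j\alpha_j,\beta_j)$ lie in the subgroupoid $\mathcal{L}_{\mu_{\mathcal{C}}}$, so their quotient $(c_j,\,1_{\xi_0})$ does too; matching the second coordinate against the explicit form of $\mathcal{L}_{\mu_{\mathcal{C}}}$ forces $g=e$, whence $c_j=[p,p]$ is a unit and $\vec\alpha=\vec\alpha'$. Differentiating the same identity along the unit section shows that any tangent vector to $\mathcal{I}$ annihilated by $\mathrm{d}\varpi$ vanishes, so $\varpi$ is unramified; an injective unramified morphism of smooth varieties in characteristic zero is a locally closed embedding, with image the set-theoretic composite $\Gamma_{\mathcal{C}}^{m,n}\circ\Lambda_{\mathcal{C}}^{m,n}$. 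Crucially this argument is componentwise, so the coupling imposed by the reduction defining $\Lambda_{\mathcal{C}}^{m,n}$—that all boundary values coincide with $x_0$—never obstructs it.

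For (i) I would establish transversality as a dimension count together with cleanness. Using $\dim\mathrm{Kos}_{\mathcal{C}}=\ell$, $\dim\g_{\mathcal{C}}=\dim\g$, and the groupoid fibre dimensions, the parametrization yields $\dim\mathcal{I}=\tfrac12\dim(T^*G)_{\mathcal{C}}^{m,n}+\tfrac12\dim W$, which is exactly the dimension a transverse intersection would have and matches $\dim(\Gamma_{\mathcal{C}}^{m,n}\circ\Lambda_{\mathcal{C}}^{m,n})$. As $\mathcal{I}$ is smooth of this dimension and contained in $(\Lambda_{\mathcal{C}}^{m,n}\times\Gamma_{\mathcal{C}}^{m,n})\cap((T^*G)_{\mathcal{C}}^{m,n}\times\Delta(\cdots)\times W)$, it remains to verify cleanness, i.e.\ that $T\mathcal{I}$ equals the intersection of the two tangent spaces; this is a pointwise linear-algebra check using the description of $T\Lambda_{\mathcal{C}}^{m,n}$ from the reduction in Lemma~\ref{Lemma: Lagrangian correspondence} and of $T\Gamma_{\mathcal{C}}^{m,n}$ from $\mathcal{L}_{\mu_{\mathcal{C}}}$, and cleanness with the expected dimension is equivalent to transversality. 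I expect the genuine obstacle to be the rigidity in (ii)—showing that restricting to the Kostant slice annihilates all isotropy ambiguity—rather than the essentially bookkeeping count in (i); the subgroupoid-division step above is exactly what makes that rigidity transparent.
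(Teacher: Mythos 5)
The decisive flaw is in your part (ii), at the final step: the principle you invoke --- ``an injective unramified morphism of smooth varieties in characteristic zero is a locally closed embedding'' --- is false. Take $f:\mathbb{A}^1\setminus\{-1\}\longrightarrow\mathbb{A}^2$, $t\mapsto (t^2-1,\,t(t^2-1))$: it is injective (the node of the image has preimages $t=\pm 1$, and $-1$ has been removed), unramified, and both source and target are smooth; yet its image is the full nodal cubic, whose unique reduced subscheme structure is singular, so $f$ is not a locally closed embedding. Injectivity plus injectivity of differentials controls neither the topology of the image (the map need not be a homeomorphism onto its image) nor Zariski local closedness of the image. This is precisely the work the paper's proof does and your proposal omits: after proving injectivity of $\pi\big\vert_X$, the paper \emph{constructs an explicit continuous inverse} $\tau$ (built from the descent $\theta:(T^*G)_{\mathcal{C}}^{m,n}\longrightarrow\mathrm{Kos}_{\mathcal{C}}$ of $\nu^{m,n}$ and the projection $\sigma:T^*G\longrightarrow G$), establishing that $X\longrightarrow\pi(X)$ is a homeomorphism; it obtains the immersion property not by your vague ``differentiating along the unit section'' but from part (i) together with \cite[Lemma 2.0.5]{WehrheimWoodward}; and it then upgrades Euclidean to Zariski local closedness of the image via constructibility and \cite[Section XII, Proposition 2.2 and Corollary 2.3]{SGA1}, concluding with the fact that a bijective morphism of smooth varieties is an isomorphism. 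None of these steps can be bypassed by the principle you cite. (Your injectivity argument itself, dividing in the subgroupoid $\mathcal{L}_{\mu_{\mathcal{C}}}$ to produce an element with unit second coordinate and concluding $g=e$, is correct and is a genuinely slicker route than the paper's explicit parametrization by tuples $(g_i,[h_i:x_i])$ with $\beta=\beta'\Rightarrow g_i=g_i'$ and the $\mathcal{H}_{\mathrm{Kos}_{\mathcal{C}}^{m,n}}$-orbit condition forcing equal slice values.)

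Part (i) is also incomplete as written, though not wrong in strategy. Your criterion ``clean intersection of the expected dimension implies transversality'' is valid, but you assume at the outset that $\mathcal{I}$ is smooth of dimension $\tfrac12\dim(T^*G)_{\mathcal{C}}^{m,n}+\tfrac12\dim W$ (``the constructions already present $\mathcal{I}$ as a smooth variety'') and relegate cleanness to an unexecuted ``pointwise linear-algebra check.'' That check \emph{is} the entire content of the paper's proof of (i): the linear map $\phi:\g^{m+n}\oplus T_{\gamma}\mathrm{Kos}_{\mathcal{C}}\longrightarrow V$, the identification of the tangent intersection with $\Delta(\mathrm{im}\,\phi)$, the computation $\mathrm{ker}\,\phi\cong\g_{\gamma}\oplus\cdots\oplus\mathrm{Ad}_{g_{m+n}^{-1}}(\g_{\gamma})$, the crucial regularity input $\dim\g_{\gamma}=\ell$ (which comes from $\mu_{\p}(\gamma)\in\mathrm{Kos}\subseteq\greg$ via Proposition \ref{Proposition: Nice}, and which your outline never identifies), and the verification $\dim V_i=\dim\g$. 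Until that computation is carried out, neither the smoothness of $\mathcal{I}$ nor the equality $T\mathcal{I}=T(\Lambda_{\mathcal{C}}^{m,n}\times\Gamma_{\mathcal{C}}^{m,n})\cap T((T^*G)_{\mathcal{C}}^{m,n}\times\Delta(\cdots)\times W)$ is available, so your argument for (i) is a plan rather than a proof; and since your (ii) relies on (i) only through the unramifiedness claim, the embedding step remains the genuine gap.
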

	
	\begin{proof}
		As this proof is technical, we begin with an overview of its structure. Our approach is to choose $\p\in\mathcal{C}$. This allows us to identify $\g_{\mathcal{C}}$ and $(T^*G)_{\mathcal{C}}$ with $G\times_P\p$ and the pair groupoid $$(T^*G)_{\p}=\mathrm{Pair}(G\times_{U(P)}\p)\sll{0}L(P)\tto G\times_P\p,$$ respectively, where $P\subset G$ is the parabolic subgroup integrating $\p\subset\g$. These identifications yield concrete descriptions of $\Lambda_{\mathcal{C}}^{m,n}$, $\Gamma_{\mathcal{C}}^{m,n}$, $(T^*G)_{\mathcal{C}}^{m,n}$, $(T^*G)_{\mathcal{C}}^{1,0}$, and $(T^*G)_{\mathcal{C}}^{0,1}$. As a consequence, we obtain a concrete description of points in the intersection of the subvarieties in \textup{(i)}; this intersection is denoted by $X$ in our proof. We then find a sufficient linear-algebraic condition for the subvarieties in \textup{(i)} to be transverse at points in $X$: it suffices for the vector subspaces $T_{\alpha}(\nu^{m,n})^{-1}(\mathrm{Kos}_{\mathcal{C}}^{m,n})\oplus V$ and $T_{(\alpha,\alpha)}\Delta(((T^*G)_{\mathcal{C}}^{0,1})^n\times((T^*G)_{\mathcal{C}}^{1,0})^m)$ to be transverse in $T_{(\alpha,\alpha)}((((T^*G)_{\mathcal{C}}^{0,1})^n\times((T^*G)_{\mathcal{C}}^{1,0})^m)\times(((T^*G)_{\mathcal{C}}^{0,1})^n\times((T^*G)_{\mathcal{C}}^{1,0})^m))$, where precise definitions of $\alpha$ and these vector spaces are given in the body of the proof. By combining Equations \eqref{Equation: Intersection} and \eqref{Equation: Kernel} with dimension counts, we show that this sufficient condition holds if and only if $\dim V=(m+n)\dim\g$. This second condition is easily established, verifying \textup{(i)}.
		
		The proof of \textup{(ii)} also uses the identifications of $\g_{\mathcal{C}}$ and $(T^*G)_{\mathcal{C}}$ with $G\times_P\p$ and $(T^*G)_{\p}$, respectively. We first show that the projection indicated in (ii) defines a topological embedding of $X$ into the codomain of that projection. This embedding is subsequently shown to be one of smooth manifolds. Further arguments are then given to establish that this is a locally closed embedding of algebraic varieties.
		
		We begin by proving (i). A point in the intersection $$X\coloneqq(\Lambda_{\mathcal{C}}^{m,n}\times\Gamma_{\mathcal{C}}^{m,n})\cap((T^*G)_{\mathcal{C}}^{m,n}\times\Delta(((T^*G)_{\mathcal{C}}^{0,1})^n\times((T^*G)_{\mathcal{C}}^{1,0})^m)\times((T^*G)^{0,1})^n\times((T^*G)^{1,0})^m)$$ must take the form $([\alpha],\alpha,\alpha,\beta)$ for some $\alpha=(\alpha_1,\ldots,\alpha_{m+n})\in(\nu_{\mathcal{C}}^{m,n})^{-1}(\mathrm{Kos}_{\mathcal{C}}^{m,n})$ and $\beta=(\beta_1,\ldots\beta_{m+n})\in((T^*G)^{0,1})^n\times((T^*G)^{1,0})^m$ satisfying $(\alpha_j,\beta_j)\in((T^*G)_{\mathcal{C}}^{0,1}\times(T^*G)^{0,1})\cap\mathcal{L}_{\mu_{\mathcal{C}}}\text{ for all }j\in\{1,\ldots,n\}$ and $(\alpha_k,\beta_k)\in((T^*G)_{\mathcal{C}}^{1,0}\times(T^*G)^{1,0})\cap\mathcal{L}_{\mu_{\mathcal{C}}}\text{ for all }k\in\{n+1,\ldots,m+n\}$. 
		
		Choose $\p\in\mathcal{C}$. Use Proposition \ref{Proposition: Canonical} and Corollary \ref{Corollary: Canonical} to freely identify the Poisson Hamiltonian $G$-variety $\g_{\mathcal{C}}$ and $G$-equivariant symplectic groupoid $(T^*G)_{\mathcal{C}}$ with $G\times_P\p$ and $(T^*G)_{\p}$, respectively. It follows that $\mathrm{Kos}_{\mathcal{C}}=\mu_{\p}^{-1}(\mathrm{Kos})$. We may find $(g_i,[h_i:x_i])\in G\times\mathrm{Kos}_{\mathcal{C}}$ for $i\in\{1,\ldots,m+n\}$ such that $$(\alpha_j,\beta_j)=([[g_jh_j:x_j]:[h_j:x_j]],(g_j,\mathrm{Ad}_{h_j}(x_j)))\text{ for }j\in\{1,\ldots,n\}$$
		and
		$$(\alpha_k,\beta_k)=([[h_k:x_k]:[g_k^{-1}h_k:x_k]],(g_k,\mathrm{Ad}_{g_k^{-1}h_k}(x_k)))\text{ for }k\in\{n+1,\ldots,m+n\}.$$ The condition $\alpha\in(\nu^{m,n})^{-1}(\mathrm{Kos}_{\mathcal{C}}^{m,n})$ then yields $[h_1:x_1]=\cdots=[h_{m+n}:x_{m+n}]\in\mathrm{Kos}_{\mathcal{C}}$. Denote this uniform element of $\mathrm{Kos}_{\mathcal{C}}$ by $\gamma$. We have 
		\begin{align*}T_{\alpha}(\nu^{m,n})^{-1}(\mathrm{Kos}_{\mathcal{C}}^{m,n}) & = (\mathrm{d}\nu^{m,n}_{\alpha})^{-1}(T_{\nu^{m,n}(\alpha)}\mathrm{Kos}_{\mathcal{C}}^{m,n})\\
			& = \{(v_1,\ldots,v_{m+n})\in T_{\alpha}(((T^*G)_{\mathcal{C}}^{0,1})^n\times((T^*G)_{\mathcal{C}}^{1,0})^m):\substack{(\mathrm{d}\sss_{\mathcal{C}})_{\alpha_{n+1}}(v_{n+1})=\cdots=(\mathrm{d}\sss_{\mathcal{C}})_{\alpha_{m+n}}(v_{m+n})\\=(\mathrm{d}\ttt_{\mathcal{C}})_{\alpha_{1}}(v_{1})=\cdots=(\mathrm{d}\ttt_{\mathcal{C}})_{\alpha_{n}}(v_{n})\in T_{\gamma}\mathrm{Kos}_{\mathcal{C}}}\}.
		\end{align*}
		
		Write $\mathcal{A}_g:G\times_P\p\longrightarrow G\times_P\p$ for the action of $g\in G$ on $G\times_P\p$. Consider the subspaces
		$$V_j\coloneqq\{[((\mathrm{d}\mathcal{A}_{g_j})_{[h_j:x_j]}(v)-(X_{\mathrm{Ad}_{g_j}(\xi)})_{[g_jh_j:x_j]},v)]:\xi\in\g,\text{ }[v]\in T_{\gamma}\mathrm{Kos}_{\mathcal{C}}\}\s T_{\alpha_j}(T^*G)_{\mathcal{C}}^{0,1}\text{ for }j\in\{1,\ldots,n\},$$
		$$V_k\coloneqq\{[(v,(X_{\xi})_{[g_k^{-1}h_k:x_k]}+(\mathrm{d}\mathcal{A}_{g_k^{-1}})_{[h_k:x_k]}(v))]:\xi\in\g,\text{ }[v]\in T_{\gamma}\mathrm{Kos}_{\mathcal{C}}\}\s T_{\alpha_k}(T^*G)_{\mathcal{C}}^{1,0}\text{ for }k\in\{n+1,\ldots,m+n\},$$ and $$V\coloneqq V_1\oplus\cdots\oplus V_{m+n}\s T_{\alpha}(((T^*G)_{\mathcal{C}}^{0,1})^n\times((T^*G)_{\mathcal{C}}^{1,0})^m).$$ Our objective is to prove that $T_{\alpha}(\nu^{m,n})^{-1}(\mathrm{Kos}_{\mathcal{C}}^{m,n})\oplus V$ and $T_{(\alpha,\alpha)}\Delta(((T^*G)_{\mathcal{C}}^{0,1})^n\times((T^*G)_{\mathcal{C}}^{1,0})^m)$ are transverse in $T_{(\alpha,\alpha)}((((T^*G)_{\mathcal{C}}^{0,1})^n\times((T^*G)_{\mathcal{C}}^{1,0})^m)\times(((T^*G)_{\mathcal{C}}^{0,1})^n\times((T^*G)_{\mathcal{C}}^{1,0})^m))$.
		
		Define a linear map $\phi:\g^{m+n}\oplus T_{\gamma}\mathrm{Kos}_{\mathcal{C}}\longrightarrow V$ as follows: the projection of $\phi(\xi_1,\ldots,\xi_{m+n},[v])\in V$ onto $V_j$ is $$[((\mathrm{d}\mathcal{A}_{g_j})_{[h_i:x_i]}(v)-(X_{\mathrm{Ad}_{g_j}(\xi_j)})_{[g_jh_j:x_j]},v)]$$ for $j\in\{1,\ldots,n\}$, and the projection to $V_k$ is $$[(v,(X_{\xi})_{[g_k^{-1}h_k:x_k]}+(\mathrm{d}\mathcal{A}_{g_k^{-1}})_{[h_k:x_k]}(v))]$$ for $k\in\{n+1,\ldots,m+n\}$. A straightforward exercise reveals that \begin{equation}\label{Equation: Intersection}(T_{\alpha}(\nu^{m,n})^{-1}(\mathrm{Kos}_{\mathcal{C}}^{m,n})\oplus V)\cap(T_{(\alpha,\alpha)}\Delta(((T^*G)_{\mathcal{C}}^{0,1})^n\times((T^*G)_{\mathcal{C}}^{1,0})^m))=\Delta(\mathrm{im}\hspace{2pt}\phi).\end{equation}
		
		We claim that \begin{equation}\label{Equation: Kernel}\mathrm{ker}\hspace{2pt}\phi=\g_{\gamma}\oplus\cdots\oplus\g_{\gamma}\oplus\mathrm{Ad}_{g_{n+1}^{-1}}(\g_{\gamma})\oplus\cdots\oplus\mathrm{Ad}_{g_{m+n}^{-1}}(\g_{\gamma})\oplus\{0\},\end{equation} where $\g_{\gamma}\s\g$ is the Lie algebra of $G_{\gamma}\s G$. To this end, suppose that $(\xi_1,\ldots,\xi_{m+n},[v])\in\g^{m+n}\oplus T_{\gamma}\mathrm{Kos}_{\mathcal{C}}$. This vector belongs to $\mathrm{ker}\hspace{2pt}\phi$ if and only if $((\mathrm{d}\mathcal{A}_{g_j})_{[h_j:x_k]}(v)-(X_{\mathrm{Ad}_{g_j}(\xi_j)})_{[g_jh_j:x_j]},v)$ is tangent to the diagonal $L(P)$-orbit of $([g_jh_j:x_j],[h_j:x_j])\in (G\times_{U(P)}\p)^2$ for all $j\in\{1,\ldots,n\}$, and $(v,(X_{\xi})_{[g_k^{-1}h_k:x_k]}+(\mathrm{d}\mathcal{A}_{g_k^{-1}})_{[h_k:x_k]}(v))$ is tangent to the diagonal $L(P)$-orbit of $([h_k:x_k],[g_k^{-1}h_k:x_k])\in (G\times_{U(P)}\p)^2$ for all $k\in\{n+1,\ldots,m+n\}$. One necessary condition for this to occur is that $[v]=0$. Observe that $(\mathrm{d}\mathcal{A}_{g_j})_{[h_j:x_j]}(v)$ and $(\mathrm{d}\mathcal{A}_{g_k^{-1}})_{[h_k:x_k]}(v)$ are then necessarily tangent to the $L(P)$-orbits of $[g_jh_j:x_j]\in G\times_{U(P)}\mathfrak{p}$ and $[g_k^{-1}h_k:x_k]\in G\times_{U(P)}\mathfrak{p}$, respectively, for all $j\in\{1,\ldots,n\}$ and $k\in\{n+1,\ldots,m+n\}$. We conclude that $(\xi_1,\ldots,\xi_{m+n},[v])\in\mathrm{ker}\hspace{2pt}\phi$ if and only if $[v]=0$, $(X_{\mathrm{Ad}_{g_j}(\xi_j)})_{[g_jh_j:x_j]}$ is tangent to the $L(P)$-orbit of $[g_jh_j:x_j]\in G\times_{U(P)}\p$ for all $j\in\{1,\ldots,n\}$, and $(X_{\xi_k})_{[g_k^{-1}h_k:x_k]}$ is tangent to the $L(P)$-orbit of $[g_k^{-1}h_k:x_k]\in G\times_{U(P)}\p$ for all $k\in\{n+1,\ldots,m+n\}$. This holds if and only if $[v]=0$, $(X_{\mathrm{Ad}_{g_j}(\xi_j)})_{[g_jh_j:x_j]}$ is zero in tangent  space of $[g_jh_j:x_j]=g_j\cdot\gamma\in G\times_P\p$ for all $j\in\{1,\ldots,n\}$, and $(X_{\xi_k})_{[g_k^{-1}h_k:x_k]}$ is zero in the tangent space of $[g_k^{-1}h_k:x_k]=g_k^{-1}\cdot\gamma\in G\times_P\p$ for all $k\in\{n+1,\ldots,m+n\}$. We conclude that $\mathrm{ker}\hspace{2pt}\phi$ is as claimed.
		
		Now observe that the $G$-orbit of $\gamma$ in $G\times_P\p$ has dimension at least that of the $G$-orbit of $\mu_{\mathfrak{p}}(\gamma)$ in $\g$. Since $\mu_p(\gamma)\in\mathrm{Kos}$, the former orbit has dimension at least $\dim\g-\ell$. An application of Proposition \ref{Proposition: Nice} then reveals that the $G$-orbit of $\gamma$ has dimension exactly $\dim\g-\ell$. The $\g$-centralizer $\g_{\gamma}$ must therefore have dimension equal to $\ell$. By \eqref{Equation: Kernel}, $\dim\mathrm{ker}\hspace{2pt}\phi=(m+n)\ell$. This combines with \eqref{Equation: Intersection} to imply that the intersection of $T_{\alpha}(\nu^{m,n})^{-1}(\mathrm{Kos}_{\mathcal{C}}^{m,n})\oplus V$ and $T_{(\alpha,\alpha)}\Delta(((T^*G)_{\mathcal{C}}^{0,1})^n\times((T^*G)_{\mathcal{C}}^{1,0})^m)$ has dimension $$(m+n)\dim\g+\ell-(m+n)\ell=(m+n)(\dim\g-\ell)+\ell.$$ The dimension of the sum of $T_{\alpha}(\nu^{m,n})^{-1}(\mathrm{Kos}_{\mathcal{C}}^{m,n})\oplus V$ and $T_{(\alpha,\alpha)}\Delta(((T^*G)_{\mathcal{C}}^{0,1})^n\times((T^*G)_{\mathcal{C}}^{1,0})^m)$ is then 
		\begin{align*}
			& \dim(\nu^{m,n})^{-1}(\mathrm{Kos}_{\mathcal{C}}^{m,n})+\dim V+n\dim(T^*G)_{\mathcal{C}}^{0,1}+m\dim(T^*G)_{\mathcal{C}}^{1,0}-(m+n)(\dim\g-\ell)-\ell \\ & = (m+n)\dim(T^*G)_{\mathcal{C}}-((m+n)\dim\g-\ell)+\dim V+n\dim(T^*G)_{\mathcal{C}}^{0,1}+m\dim(T^*G)_{\mathcal{C}}^{1,0}-(m+n)(\dim\g-\ell)-\ell\\
			& = n(\dim(T^*G)_{\mathcal{C}}^{0,1}+\dim\g-\ell)+m(\dim(T^*G)_{\mathcal{C}}^{1,0}+\dim\g-\ell)-((m+n)\dim\g-\ell)+\dim V \\ & \hspace{9pt} +n\dim(T^*G)_{\mathcal{C}}^{0,1}+m\dim(T^*G)_{\mathcal{C}}^{1,0}-(m+n)(\dim\g-\ell)-\ell\\
			& = 2n\dim(T^*G)_{\mathcal{C}}^{0,1}+2m\dim(T^*G)_{\mathcal{C}}^{1,0}-(m+n)\dim\g+\dim V 
		\end{align*}
		We conclude that $T_{\alpha}(\nu^{m,n})^{-1}(\mathrm{Kos}_{\mathcal{C}}^{m,n})\oplus V$ and $T_{(\alpha,\alpha)}\Delta(((T^*G)_{\mathcal{C}}^{0,1})^n\times((T^*G)_{\mathcal{C}}^{1,0})^m)$ are transverse in $$T_{(\alpha,\alpha)}((((T^*G)_{\mathcal{C}}^{0,1})^n\times((T^*G)_{\mathcal{C}}^{1,0})^m)\times(((T^*G)_{\mathcal{C}}^{0,1})^n\times((T^*G)_{\mathcal{C}}^{1,0})^m))$$ if and only if $\dim V=(m+n)\dim\g$. It therefore suffices to prove that $\dim V_i=\dim\g$ for all $i\in\{1,\ldots,m+n\}$.
		
		Given $j\in\{1,\ldots,n\}$ and $k\in\{n+1,\ldots,m+n\}$, consider the surjective linear maps
		$$\g\oplus T_{\gamma}\mathrm{Kos}_{\mathcal{C}}\longrightarrow V_j,\quad(\xi,v)\mapsto[((\mathrm{d}\mathcal{A}_g)_{[h_j:x_j]}(v)-(X_{\mathrm{Ad}_{g_j}(\xi)})_{[g_jh_j:x_j]},v)]$$ and $$\g\oplus T_{\gamma}\mathrm{Kos}_{\mathcal{C}}\longrightarrow V_k,\quad(\xi,v)\mapsto[(v,(X_{\xi})_{[g_k^{-1}h_k:x_k]}+(\mathrm{d}\mathcal{A}_{g_k^{-1}})_{[h_k:x_k]}(v))].$$ Their respective kernels are $\g_{\gamma}\oplus\{0\}\s \g\oplus T_{\gamma}\mathrm{Kos}_{\mathcal{C}}$ and $\mathrm{Ad}_{g_k^{-1}}(\g_{\gamma})\oplus\{0\}\s \g\oplus T_{\gamma}\mathrm{Kos}_{\mathcal{C}}$. We also recall that $\dim\g_{\gamma}=\ell$. These last two sentences imply that $$\dim V_j=\dim V_k=\dim\g+\dim\mathrm{Kos}_{\mathcal{C}}-\ell=\dim\g.$$ Our proof of (i) is therefore complete.
		
		We now verify (ii). Let $\pi$ denote the projection of $$(T^*G)_{\mathcal{C}}^{m,n}\times((T^*G)_{\mathcal{C}}^{0,1})^n\times((T^*G)_{\mathcal{C}}^{1,0})^m\times((T^*G)_{\mathcal{C}}^{0,1})^n\times((T^*G)_{\mathcal{C}}^{1,0})^m\times((T^*G)^{0,1})^n\times((T^*G)^{1,0})^m$$ to the first and last two factors $$(T^*G)_{\mathcal{C}}^{m,n}\times((T^*G)^{0,1})^n\times((T^*G)^{1,0})^m.$$ Our first step is to prove that $\pi$ restricts to a homeomorphism from $X$ to its image $Y\coloneqq\pi(X)$. To this end, suppose that $$([\alpha],\alpha,\alpha,\beta),([\alpha'],\alpha',\alpha',\beta')\in X$$ satisfy $[\alpha]=[\alpha']$ and $\beta=\beta'$. We may find $(g_i,[h_i:x_i]),(g_i',[h_i':x_i'])\in G\times\mathrm{Kos}_{\mathcal{C}}$ for $i\in\{1,\ldots,m+n\}$ such that $$(\alpha_j,\beta_j)=([[g_jh_j:x_j]:[h_j:x_j]],(g_j,\mathrm{Ad}_{h_j}(x_j)))\text{ for }j\in\{1,\ldots,n\},$$
		$$(\alpha_j',\beta_j')=([[g_j'h_j':x_j']:[h_j':x_j']],(g_j',\mathrm{Ad}_{h_j'}(x_j')))\text{ for }j\in\{1,\ldots,n\},$$
		$$(\alpha_k,\beta_k)=([[h_k:x_k]:[g_k^{-1}h_k:x_k]],(g_k,\mathrm{Ad}_{g_k^{-1}h_k}(x_k)))\text{ for }k\in\{n+1,\ldots,m+n\},$$ 
		$$(\alpha_k',\beta_k')=([[h_k':x_k']:[(g_k')^{-1}h_k':x_k']],(g_k',\mathrm{Ad}_{(g_k')^{-1}h_k'}(x_k')))\text{ for }k\in\{n+1,\ldots,m+n\},$$ $$[h_1:x_1]=\cdots=[h_{m+n}:x_{m+n}]\in\mathrm{Kos}_{\mathcal{C}},$$ and $$[h_1':x_1']=\cdots=[h_{m+n}':x_{m+n}']\in\mathrm{Kos}_{\mathcal{C}}.$$ The condition $\beta=\beta'$ then implies that $g_i=g_i'$ for all $i\in\{1,\ldots,m+n\}$. On the other hand, the condition $[\alpha]=[\alpha']$ means that $\alpha$ and $\alpha'$ belong to the same orbit of the group scheme $\mathcal{H}_{\mathrm{Kos}_{\mathcal{C}}^{m,n}}\longrightarrow\mathrm{Kos}_{\mathcal{C}}^{m,n}$ on $(\nu^{m,n})^{-1}(\mathrm{Kos}_{\mathcal{C}}^{m,n})\longrightarrow\mathrm{Kos}_{\mathcal{C}}^{m,n}$. It follows that $\nu^{m,n}(\alpha)=\nu^{m,n}(\alpha')$, so that $[h_i:x_i]=[h_i':x_i']\in\mathrm{Kos}_{\mathcal{C}}$ for all $i\in\{1,\ldots,m+n\}$. These last three sentences imply that $\alpha_i=\alpha_i'$ for all $i\in\{1,\ldots,n\}$, i.e. $\alpha=\alpha'$. We conclude that $\pi$ restricts to a bijection $X\longrightarrow Y$. Since this bijection is evidently continuous, it is a homeomorphism if and only if its inverse is continuous.
		
		Consider the descent of $\nu^{m,n}$ to a continuous map $$(T^*G)_{\mathcal{C}}^{m,n}=(\nu^{m,n})^{-1}(\mathrm{Kos}_{\mathcal{C}}^{m,n})/\mathcal{H}_{\mathrm{Kos}_{\mathcal{C}}^{m,n}}\longrightarrow\mathrm{Kos}_{\mathcal{C}}^{m,n}/\mathcal{H}_{\mathrm{Kos}_{\mathcal{C}}^{m,n}}=\mathrm{Kos}_{\mathcal{C}}^{m,n}.$$ Write $\theta:(T^*G)_{\mathcal{C}}^{m,n}\longrightarrow\mathrm{Kos}_{\mathcal{C}}$ for its composition with $$\mathrm{Kos}_{\mathcal{C}}^{m,n}\longrightarrow\mathrm{Kos}_{\mathcal{C}},\quad (x_1,\ldots,x_n,y,\ldots,y,z_{n+1},\ldots,z_{m+n})\mapsto y.$$ Let us also consider the projection $$\sigma:T^*G=G\times\g\longrightarrow G.$$ These preliminaries allow us to define the map $\tau$ from $$(T^*G)_{\mathcal{C}}^{m,n}\times(((T^*G)^{0,1})^n\times((T^*G)^{1,0})^m)$$
		to $$(T^*G)_{\mathcal{C}}^{m,n}\times((T^*G)_{\mathcal{C}}^{0,1})^n\times((T^*G)_{\mathcal{C}}^{1,0})^m\times((T^*G)_{\mathcal{C}}^{0,1})^n\times((T^*G)_{\mathcal{C}}^{1,0})^m\times((T^*G)^{0,1})^n\times((T^*G)^{1,0})^m$$ by $\tau(\gamma,\beta)=(\gamma,\alpha,\alpha,\beta)$, where $\alpha=(\alpha_1,\ldots\alpha_{m+n})\in((T^*G)_{\mathcal{C}}^{0,1})^n\times((T^*G)_{\mathcal{C}}^{1,0})^m$ is given by 
		$$\alpha_j=[\sigma(\beta_j)\cdot\theta(\gamma):\theta(\gamma)]\text{ for }j\in\{1,\ldots,n\}$$
		and
		$$\alpha_k=[\theta(\gamma):\sigma(\beta_k)^{-1}\cdot\theta(\gamma)]\text{ for }k\in\{n+1,\ldots,m+n\}.$$ This map is evidently continuous. On the other hand, the discussion in the previous paragraph implies that $\tau$ restricts to an inverse of the map $X\longrightarrow Y$.
		
		Part (i) and \cite[Lemma 2.0.5]{WehrheimWoodward} imply that $\pi$ restricts to an immersion of $X$ into $(T^*G)_{\mathcal{C}}^{m,n}\times((T^*G)^{0,1})^n\times((T^*G)^{1,0})^m$. Recalling that $X\longrightarrow\pi(X)=Y$ is a homeomorphism, we see that $\pi\big\vert_X$ is an embedding of manifolds. It therefore remains only to prove that this embedding is a locally closed embedding of algebraic varieties. A first step is to observe that $Y$ is a constructible subset of $(T^*G)_{\mathcal{C}}^{m,n}\times((T^*G)^{0,1})^n\times((T^*G)^{1,0})^m$. Its closure $\overline{Y}$ in the Zariski topology must therefore coincide with its closure in the Euclidean topology \cite[Section XII, Proposition 2.2]{SGA1}. On the other hand, $Y$ being an embedded submanifold of $(T^*G)_{\mathcal{C}}^{m,n}\times((T^*G)^{0,1})^n\times((T^*G)^{1,0})^m$ forces $Y$ to be a Euclidean-open subset of $\overline{Y}$. The constructibility of $Y$ then implies that $Y$ must be Zariski-open in $\overline{Y}$ \cite[Section XII, Corollary 2.3]{SGA1}. In other words, $Y$ is locally closed in the Zariski topology of $(T^*G)_{\mathcal{C}}^{m,n}\times((T^*G)^{0,1})^n\times((T^*G)^{1,0})^m$. It is thereby a subvariety of $(T^*G)_{\mathcal{C}}^{m,n}\times((T^*G)^{0,1})^n\times((T^*G)^{1,0})^m$. Since $Y$ is a submanifold, it is smooth as a subvariety of $(T^*G)_{\mathcal{C}}^{m,n}\times((T^*G)^{0,1})^n\times((T^*G)^{1,0})^m$. The map $X\longrightarrow\pi(X)=Y$ is then a bijective morphism of smooth varieties, i.e. an isomorphism. This completes the proof of (ii).
	\end{proof}
	
	The following is an immediate consequence.
	
	\begin{corollary}
		If $m,n\in\mathbb{Z}_{\geq 0}$ and $(m,n)\neq (0,0)$, then the Lagrangian relations $$\Lambda_{\mathcal{C}}^{m,n}:(T^*G)_{\mathcal{C}}^{m,n}\Longrightarrow ((T^*G)_{\mathcal{C}}^{0,1})^n\times((T^*G)_{\mathcal{C}}^{1,0})^m$$ and $$\Gamma_{\mathcal{C}}^{m,n}:((T^*G)_{\mathcal{C}}^{0,1})^n\times((T^*G)_{\mathcal{C}}^{1,0})^m\Longrightarrow ((T^*G)^{0,1})^n\times((T^*G)^{1,0})^m$$ are strongly composable. The composition $\Gamma_{\mathcal{C}}^{m,n}\circ\Lambda_{\mathcal{C}}^{m,n}$ is a smooth Lagrangian subvariety of $(T^*G)_{\mathcal{C}}^{m,n}\times\overline{((T^*G)^{0,1})^n\times((T^*G)^{1,0})^m}$.
	\end{corollary}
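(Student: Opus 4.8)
The plan is to observe that the statement is a direct unwinding of the definitions of \emph{composable} and \emph{strongly composable} Lagrangian relations recorded at the start of Subsection~\ref{Subsection: Proof}, applied to the transversality and embedding assertions already established in Theorem~\ref{Theorem: Technical}. First I would fix the notation $X\coloneqq(T^*G)_{\mathcal{C}}^{m,n}$, $Y\coloneqq((T^*G)_{\mathcal{C}}^{0,1})^n\times((T^*G)_{\mathcal{C}}^{1,0})^m$, and $Z\coloneqq((T^*G)^{0,1})^n\times((T^*G)^{1,0})^m$, so that $\Lambda_{\mathcal{C}}^{m,n}:X\Longrightarrow Y$ and $\Gamma_{\mathcal{C}}^{m,n}:Y\Longrightarrow Z$ are precisely the two relations under consideration, and $X\times Y\times Y\times Z$ is exactly the ambient space appearing in Theorem~\ref{Theorem: Technical}.

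Next I would recall that $\Lambda_{\mathcal{C}}^{m,n}$ and $\Gamma_{\mathcal{C}}^{m,n}$ are composable precisely when $\Lambda_{\mathcal{C}}^{m,n}\times\Gamma_{\mathcal{C}}^{m,n}$ and $X\times\Delta(Y)\times Z$ are transverse in $X\times Y\times Y\times Z$ and the projection $X\times Y\times Y\times Z\longrightarrow X\times Z$ restricts to an immersion on their intersection, and strongly composable when this restriction is in fact an embedding of manifolds. Theorem~\ref{Theorem: Technical}(i) is exactly the required transversality statement. Theorem~\ref{Theorem: Technical}(ii) asserts that the relevant projection restricts to an algebraic, locally closed embedding of $$(\Lambda_{\mathcal{C}}^{m,n}\times\Gamma_{\mathcal{C}}^{m,n})\cap(X\times\Delta(Y)\times Z)$$ into $X\times Z$. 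Since an embedding of manifolds is in particular an immersion, part (ii) simultaneously supplies the immersion needed for composability and the embedding needed for strong composability, so that $\Lambda_{\mathcal{C}}^{m,n}$ and $\Gamma_{\mathcal{C}}^{m,n}$ are strongly composable.

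Finally I would invoke the general fact, recalled at the start of Subsection~\ref{Subsection: Proof} and originating in \cite[Proposition 5.28]{Bates}, that the composition of strongly composable Lagrangian relations is a Lagrangian submanifold; this identifies $\Gamma_{\mathcal{C}}^{m,n}\circ\Lambda_{\mathcal{C}}^{m,n}$ with the image of the above projection as a Lagrangian submanifold of $X\times\overline{Z}$. The locally closed, algebraic nature of the embedding in Theorem~\ref{Theorem: Technical}(ii) moreover endows this image with the structure of a smooth subvariety, yielding the final assertion. There is essentially no genuine obstacle remaining: all of the analytic input and dimension counting has already been carried out in the proof of Theorem~\ref{Theorem: Technical}, and the only point requiring care is the bookkeeping that matches its two parts against the precise definitions---in particular noting that part (ii) delivers smoothness of the composite as an algebraic variety, and not merely as a manifold.
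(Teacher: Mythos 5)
Your proposal is correct and matches the paper exactly: the paper treats this corollary as an immediate consequence of Theorem~\ref{Theorem: Technical}, with part (i) supplying the transversality in the definition of composability and part (ii) supplying the (algebraic, locally closed) embedding needed for strong composability and for the smooth subvariety structure on $\Gamma_{\mathcal{C}}^{m,n}\circ\Lambda_{\mathcal{C}}^{m,n}$. Your unwinding of the definitions from Subsection~\ref{Subsection: Proof}, including the appeal to \cite[Proposition 5.28]{Bates}, is precisely the intended argument.
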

	
	Let $\sss:T^*G\longrightarrow\g$ and $\ttt:T^*G\longrightarrow\g$ denote the source and target of $T^*G$, respectively. Set
	$$\nu^{m,n}:(\underbrace{\sss,\ldots,\sss}_{m+n\text{ times}},\underbrace{\ttt,\ldots,\ttt}_{m+n\text{ times}}):(T^*G)^{m+n}\longrightarrow\g^{m+n}\times\overline{\g}^{m+n}$$ and
	$$\Lambda^{m,n}\coloneqq\{((\alpha_1,\ldots,\alpha_{m+n}),[(\alpha_1,\ldots,\alpha_{m+n})]):(\alpha_1,\ldots,\alpha_{m+n})\in(\nu^{m,n})^{-1}(\mathrm{Kos}^{m,n})\}.$$
	In analogy with the discussion of $\Lambda_{\mathcal{C}}^{m,n}$, $\Lambda^{m,n}$ is a Lagrangian subvariety of  $(((T^*G)^{0,1})^n\times((T^*G)^{1,0})^m)\times\overline{(T^*G)^{m,n}}$. We may consider the set-theoretic relation $$\Lambda^{m,n}\circ\Gamma_{\mathcal{C}}^{m,n}\s((T^*G)_{\mathcal{C}}^{0,1})^n\times(((T^*G)_{\mathcal{C}}^{1,0})^m)\times (T^*G)^{m,n}.$$ By arguments analogous to those in the proof of Theorem \ref{Theorem: Technical}, $\Lambda^{m,n}$ and $\Gamma_{\mathcal{C}}^{m,n}$ are composable as Lagrangian relations. We are unable to prove that $\Lambda^{m,n}$ and $\Gamma_{\mathcal{C}}^{m,n}$ are strongly composable. One may nevertheless form the Lagrangian relation 
	$$\Lambda^{m,n}\circ\Gamma_{\mathcal{C}}^{m,n}\circ\Lambda_{\mathcal{C}}^{m,n}:(T^*G)_{\mathcal{C}}^{m,n}\Longrightarrow (T^*G)^{m,n}.$$ This completes the proof of Main Theorem \ref{Theorem: Main Theorem 3}.
	
	\section{Some Lie-theoretic considerations in quasi-Poisson geometry}\label{Section: Some Lie-theoretic considerations in quasi-Poisson geometry}
	This section is a multiplicative counterpart to Section \ref{Section: Some Lie-theoretic considerations in Poisson geometry}. In Subsection \ref{Subsection: Regular elements in reductive groups}, we recall aspects of regular elements in reductive groups. A useful description of regular elements in Levi subgroups is given in Subsection \ref{Subsection: Levi subgroups}. Subsection \ref{Subsection: The double} then reviews the quasi-Hamiltonian geometry of the double $\mathrm{D}(G)$. The quasi-Hamiltonian structure on $G\times_{U(P)}P$ is reviewed in Subsection \ref{Subsection: Quasi-Hamiltonian structure}, where $P\s G$ is a parabolic subgroup.
	
	\subsection{Regular elements in reductive groups}\label{Subsection: Regular elements in reductive groups}
	Let $H$ be a connected reductive affine algebraic group. Write $H_h\s H$ for the centralizer of $h\in H$ under the conjugation action of $H$ on itself. One has the \textit{regular locus}
	$$H_{\text{reg}}\coloneqq\{h\in H:\dim H_h=\mathrm{rank}\hspace{2pt} H\}.$$ If we equip the Lie algebra of $H$ with an $H$-invariant, non-degenerate, symmetric bilinear form, then $H$ inherits the canonical Cartan--Dirac structure. On the other hand, the regular locus of a Dirac manifold is the union of its top-dimensional pre-symplectic leaves. It turns out that $H_{\text{reg}}$ is the regular locus of the Cartan--Dirac structure on $H$.
	
	\subsection{Regular elements in Levi subgroups}\label{Subsection: Levi subgroups} 
	Let $P\s G$ be a parabolic subgroup. Given any $p\in P$, let $[p]\in L(P)$ denote the image of $p$ under the quotient $P\longrightarrow L(P)$. Write $P_{[p]}\s P$ for the $P$-centralizer of $[p]\in L(P)$. We have the following counterpart to Lemma \ref{Lemma: Single orbit}.
	
	\begin{lemma}\label{Lemma: Single orbit group case}
		Suppose that $p\in P$. 
		\begin{itemize}
			\item[\textup{(i)}] If $pU(P)\cap G_{\emph{reg}}$ is non-empty, then it is an orbit of $P_{[p]}$ in $G$.
			\item[\textup{(ii)}] One has $pU(P)\cap G_{\emph{reg}}\neq\emptyset$ if and only if $[p]\in L(P)_{\emph{reg}}$.
			\item[\textup{(iii)}] If $P$ is a Borel subgroup, then $pU(P)\cap G_{\emph{reg}}$ is non-empty.
		\end{itemize}
	\end{lemma}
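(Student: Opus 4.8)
The plan is to mirror the proof of Lemma~\ref{Lemma: Single orbit}, replacing the additive coset $x+\uu$ by the multiplicative coset $pU(P)$ and the adjoint action by the conjugation action of $G$ on itself. Throughout, $G_y\s G$ denotes the conjugation centralizer of $y\in G$, and I would use that $U(P)$ is normal in $P$, that $U(P)\s P_{[p]}$, and that $P_{[p]}/U(P)=L(P)_{[p]}$ (so that $\dim P_{[p]}=\dim L(P)_{[p]}+\dim U(P)$), exactly as in the Lie-algebra setting of Subsection~\ref{Subsection: Regular elements in Levi factors}.

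For part (i), I would first check that $pU(P)$ is stable under the conjugation action of $P_{[p]}$: given $q\in P_{[p]}$ one has $[qpq^{-1}]=[q][p][q]^{-1}=[p]$ in $L(P)$, so $qpq^{-1}\in pU(P)$, and normality of $U(P)$ then forces $q\,(pU(P))\,q^{-1}=pU(P)$. Since $pU(P)$ is a translate of the unipotent group $U(P)$, it is irreducible of dimension $\dim U(P)$, and $P_{[p]}$-orbits are open in their closures; hence it suffices to show that the orbit $P_{[p]}\cdot y$ of any $y\in pU(P)\cap G_{\mathrm{reg}}$ has dimension $\dim U(P)$. This follows from the exact multiplicative analogue of \eqref{Equation: Multiple}:
\begin{align*}
\dim(P_{[p]}\cdot y) &= \dim P_{[p]}-\dim(P_{[p]}\cap G_y)\geq \dim P_{[p]}-\dim G_y\\
&= \dim P_{[p]}-\ell=\dim L(P)_{[p]}+\dim U(P)-\ell\geq\dim U(P),
\end{align*}
where the final inequality uses $\dim L(P)_{[p]}\geq\rank L(P)=\rank G=\ell$. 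As $P_{[p]}\cdot y\s pU(P)$, both inequalities must be equalities; this proves (i) and, via the forced identity $\dim L(P)_{[p]}=\ell$, the forward implication of (ii).

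The backward implication of (ii) is the main obstacle, as it requires multiplicative counterparts of the Kostant results \cite{kostant} invoked in Lemma~\ref{Lemma: Single orbit}. Fixing a maximal torus and Borel $T\s B\s P$ with simple roots $\Delta$ and the subset $\Gamma\s\Delta$ determining a Levi decomposition $P=L\ltimes U(P)$, I would use that each coset $pU(P)$ has a unique representative in $L$ to reduce to showing $pU(P)\cap G_{\mathrm{reg}}\neq\emptyset$ for every $p\in L_{\mathrm{reg}}$. The strategy parallels the additive argument: conjugate $p$ within $L$ into a Steinberg-type cross-section of $L$ built from the simple reflection representatives indexed by $\Gamma$ (the multiplicative analogue of moving $x$ into $e_\Gamma+\mathfrak{b}_{-}$), and then right-multiply by a unipotent element supported on the root groups for $\Delta\setminus\Gamma$ --- which lies in $U(P)$ --- so as to land in the full Steinberg cross-section of $G$, all of whose elements are regular in $G$. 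Here Steinberg's theory of regular elements and cross-sections \cite{Steinberg} substitutes for Kostant's Theorem~8 and Lemma~10; the delicate points are verifying that the correcting unipotent factor genuinely lies in $U(P)$ and that the resulting product is $G$-regular.

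Finally, part (iii) is immediate from (ii): if $P$ is a Borel subgroup then $L(P)$ is a maximal torus, so $L(P)_{\mathrm{reg}}=L(P)$ and $[p]\in L(P)_{\mathrm{reg}}$ for every $p\in P$.
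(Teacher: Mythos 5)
Parts (i), the forward implication of (ii), and part (iii) of your proposal are correct and coincide with the paper's treatment: the paper likewise declares (i) and the forward implication of (ii) to be completely analogous to Lemma \ref{Lemma: Single orbit}, and it too derives (iii) from (ii).

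The backward implication of (ii) --- which you correctly flag as the main obstacle --- is where your sketch has a genuine gap, and the mechanism you propose cannot be repaired. Since $U(P)\s B$, right multiplication by elements of $U(P)$ preserves Bruhat cells: $BwB\cdot U(P)=BwB$. After conjugating $p$ within $L$ into a Steinberg-type section of $L$, your element lies in the cell $Bw_\Gamma B$, where $w_\Gamma$ is a Coxeter element of the parabolic subgroup $W_\Gamma\s W$, whereas the full Steinberg section $\mathrm{Ste}=\prod_{i=1}^{\ell}\exp(\g_{\alpha_i})\widetilde{s_i}$ lies in the cell $Bw_\Delta B$ of a Coxeter element of $W$; for $P\neq G$ these cells are disjoint, so no element of $pU(P)$ (nor of any $L$-conjugate of this coset) can land in $\mathrm{Ste}$. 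The case $P=B$ makes the failure vivid: there $pU(B)\s B=BeB$, which misses $\mathrm{Ste}$ entirely, even though part (iii) is true. The additive argument does not transfer precisely because the correction $e_\Delta-e_\Gamma$ lies in $\uu$, while the multiplicative discrepancy between the two sections involves the Weyl representatives $\widetilde{s_j}$ for $j\in\Delta\setminus\Gamma$, which are not unipotent --- so the ``correcting unipotent factor'' you hoped for does not exist. (A secondary, fixable issue: conjugating a regular element of $L$ into a cross-section requires a simple-connectedness hypothesis that the lemma does not assume; this could be handled by passing to the simply connected cover, regularity being isogeny-invariant, but it would need to be said.)

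The paper's actual proof takes a different route that you should note: write the Jordan decomposition $p=p_sp_u$, arrange $p_s\in T$ and $p_u\in U(B)\cap L_{p_s}$, observe that $L_{p_s}$ is a Levi subgroup of the reductive group $G_{p_s}$, and apply Lusztig--Spaltenstein \cite[Theorem 1.3(a)]{lus-spa:79} to produce $h\in U(P)$ with $p_uh\in (G_{p_s})_{\mathrm{reg}}$; since $p_uh$ is unipotent and commutes with $p_s$, one gets $G_{ph}=(G_{p_s})_{p_uh}$ of dimension $\ell$, i.e.\ $ph\in G_{\mathrm{reg}}$. In other words, the correct multiplicative substitute for Kostant's Theorem 8 and Lemma 10 \cite{kostant} is induction of unipotent classes inside the centralizer of the semisimple part, not the Steinberg cross-section of \cite{Steinberg}.
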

	
	\begin{proof}
		The proofs of (i) and the forward implication in (ii) are completely analogous to their counterparts in Lemma \ref{Lemma: Single orbit}. At the same time, Part (iii) is an immediate consequence of (ii). It therefore remains only to establish the backward implication in (ii). To this end, choose a Levi factor $L\s G$ of $P$. Our task is to prove that $pU(P)\cap G_{\text{reg}}\neq\emptyset$ for all $p\in L_{\text{reg}}$.
		
		Suppose that $p\in L_{\text{reg}}$. Choose a maximal torus $T\s G$ and Borel subgroup $B\s G$ for which $L$ and $P$ are standard, i.e. associated to a subset of the simple roots. Let $p=p_sp_u$ be the Jordan decomposition of $p$ into a semisimple element $p_s\in L$ and unipotent element $p_u\in L$. As $U(P)$ is invariant under conjugation by elements of $L$, we may assume that $p_s\in T$. Let us also note that $p_u$ is a unipotent element of $L_{p_s}$. We may therefore assume that $p_u\in U(B)\cap L_{p_s}$.
		
		It is straightforward to verify that $L_{p_s}$ is a Levi subgroup of $G_{p_s}$. By \cite[Theorem 1.3(a)]{lus-spa:79}, there exists $h\in U(P)$ satisfying $p_uh\in (G_{p_s})_{\text{reg}}$. Note that $p_uh$ is a unipotent element of $G$ that commutes with $p_s$. We conclude that $G_{ph}=(G_{p_s})_{p_uh}$ is $\ell$-dimensional, i.e. $ph\in G_{\text{reg}}$. This completes the proof.
	\end{proof}
	
	\begin{lemma}\label{Lemma: In Borel group case}
		If $p\in P\cap G_{\emph{reg}}$, then $G_p\s P$. 
	\end{lemma}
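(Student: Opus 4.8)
The plan is to imitate the proof of Lemma~\ref{Lemma: In Borel}, with group centralizers replacing Lie-algebra centralizers and with the multiplicative dimension estimate underlying Lemma~\ref{Lemma: Single orbit group case} playing the role of \eqref{Equation: Multiple}. First, observe that $p\in pU(P)\cap G_{\mathrm{reg}}$, so this intersection is non-empty; Lemma~\ref{Lemma: Single orbit group case}(ii) then gives $[p]\in L(P)_{\mathrm{reg}}$, and the chain of inequalities establishing Lemma~\ref{Lemma: Single orbit group case}(i) consists entirely of equalities at the regular point $p$. Specializing that chain to the orbit $P_{[p]}\cdot p\subseteq pU(P)$ yields $\dim(P_{[p]}\cap G_p)=\dim G_p=\ell$ together with $\dim L(P)_{[p]}=\ell$.

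Next, I would extract the inclusion of identity components exactly as in the additive case. Since $P_{[p]}\cap G_p\subseteq G_p$ we have $(P_{[p]}\cap G_p)^{\circ}\subseteq (G_p)^{\circ}$, and the equality $\dim(P_{[p]}\cap G_p)=\ell=\dim G_p$ together with the connectedness of $(G_p)^{\circ}$ forces $(P_{[p]}\cap G_p)^{\circ}=(G_p)^{\circ}$. Consequently $(G_p)^{\circ}\subseteq P_{[p]}\subseteq P$; at the level of Lie algebras this is the statement $\g_p\subseteq\p_{[p]}\subseteq\p$, the precise multiplicative analogue of the inclusion $\g_x\subseteq\p$ obtained in Lemma~\ref{Lemma: In Borel}. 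This settles the identity component of the centralizer.

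The remaining and genuinely harder step is to upgrade $(G_p)^{\circ}\subseteq P$ to $G_p\subseteq P$. In the additive setting this is immediate, since $G_x$ is connected by \cite[Proposition 14]{kostant}; but the centralizer of a regular group element need not be connected (already in $\SL_2$ the centralizer of a regular unipotent element is disconnected, with component group $Z(\SL_2)$), so controlling the component group of $G_p$ is the crux. My approach would be to fix $g\in G_p$, use its Jordan decomposition $g=g_sg_u$ with $g_s,g_u\in G_p$, and note that the unipotent part $g_u$ lies automatically in $(G_p)^{\circ}\subseteq P$; this reduces matters to the semisimple part $g_s$. For $g_s$ I would invoke Steinberg's connectedness theorem for centralizers of semisimple elements---which requires $G$ to be simply connected and plays here the structural role that Kostant's connectedness result plays additively---so that $G_p=G_{g_s}\cap G_{g_u}$ has its residual components generated by central elements, which lie in $P$ because $Z(G)\subseteq P$. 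I expect this component-group analysis to be the main obstacle and the point at which the group case genuinely departs from the Lie-algebra case; in particular, I anticipate that a simply-connectedness hypothesis on $G$ (consistent with its appearance in the discussion of the Steinberg slice) is what makes the final inclusion go through.
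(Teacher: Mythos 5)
Your opening steps agree with the paper's proof: the multiplicative analogue of the chain \eqref{Equation: Multiple} degenerates to equalities at a regular $p$, giving $\mathrm{Lie}(G_p)=\g_p\subseteq\p$ and hence $(G_p)^{\circ}\subseteq P$ (the paper compresses exactly this into ``arguments analogous to those in the proof of Lemma~\ref{Lemma: In Borel}''), and you correctly identify the crux as the component group of $G_p$. But your resolution of the crux fails. First, a hypothesis mismatch: the lemma is stated for an arbitrary connected semisimple $G$, with no simply-connectedness assumption, so an argument resting on Steinberg's connectedness theorem for $G$ itself proves a strictly weaker statement. Second, the reduction as written is miscalibrated: decomposing a varying $g\in G_p$ as $g=g_sg_u$ gives no traction, since Steinberg's theorem concerns the group $G_{g_s}$ and its connectedness says nothing about whether $g_s\in P$ (also $G_{g_s}\cap G_{g_u}=G_g$, not $G_p$); the useful decomposition is that of $p$ itself, $p=p_sp_u$, which gives $G_p=(G_{p_s})_{p_u}$. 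Third, and decisively, the structural claim you need --- that the non-identity components of $G_p$ are generated by elements of $Z(G)$ --- is false even for simply connected $G$. With $H\coloneqq G_{p_s}$ connected reductive (Steinberg) and $p_u$ regular unipotent in $H$, one has $H_{p_u}=Z(H)\cdot(H_{p_u})^{\circ}$, so $\pi_0(G_p)$ is governed by $Z(H)$, the center of a \emph{pseudo-Levi}, which can strictly exceed $Z(G)$. Concretely, take $G=\mathrm{Sp}_6$, let $p_s$ be the symplectic involution with eigenspace decomposition $V_2\perp V_4$, so that $H\cong\mathrm{Sp}_2\times\mathrm{Sp}_4$ and $Z(H)\cong\mu_2\times\mu_2$, and set $p=p_sp_u$ with $p_u$ regular unipotent in $H$; then $p\in G_{\mathrm{reg}}$, $\pi_0(G_p)\cong(\ZZ/2\ZZ)^2$, while $Z(G)\cong\mu_2$ maps only onto the diagonal. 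The element $z=(I_2,-I_4)\in Z(H)\subseteq G_p$ lies outside $Z(G)\cdot(G_p)^{\circ}$, so your argument never establishes $z\in P$. (The lemma is nonetheless true for this $p$: every element of $G_p$ preserves each $p$-stable isotropic flag, hence lies in every parabolic containing $p$; the point is that your mechanism does not deliver this.)

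The paper takes a genuinely different route that avoids component groups of $G_p$ in the simply connected form: it considers the conjugation action of the adjoint group $G_{\mathrm{ad}}=G/Z(G)$ on $G$, invokes \cite[Lemma 3.1]{balibanu-steinberg} to assert that the stabilizer $(G_{\mathrm{ad}})_p=G_p/Z(G)$ is connected, combines this with $\mathrm{Lie}(G_p)\subseteq\p$ (the step you already have) to conclude $(G_{\mathrm{ad}})_p\subseteq P_{\mathrm{ad}}=P/Z(G)$, and then pulls back along $G\too G_{\mathrm{ad}}$ using $Z(G)\subseteq P$. Note that the device is quotienting by the center, not passing to the simply connected cover --- which, as the $\mathrm{Sp}_6$ example shows, makes $\pi_0(G_p)$ worse rather than better. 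Be aware, too, that this connectedness input is genuinely nontrivial and hypothesis-sensitive (in the example above the class of $z$ survives even in $G_p/Z(G)$, so one should check the exact hypotheses of the cited lemma rather than expect connectedness from general Steinberg-type theory); in any repair of your argument, the missing ingredient is precisely such a connectedness statement, or else a direct proof that the finite group $Z(G_{p_s})$ stabilizes the flag defining $P$.
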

	
	\begin{proof}
		Consider the adjoint group $G_{\mathrm{ad}}\coloneqq G/Z(G)$ and its conjugation action on $G$. An application of \cite[Lemma 3.1]{balibanu-steinberg} reveals that $(G_{\mathrm{ad}})_{p}$ is connected. On the other hand, arguments analogous to those in the proof of Lemma \ref{Lemma: In Borel} imply that $\mathrm{Lie}(G_p)\s\mathfrak{p}$. It follows that $(G_{\mathrm{ad}})_p\s P_{\mathrm{ad}}\coloneqq P/Z(G)$. By taking preimages under the quotient map $G\longrightarrow G_{\mathrm{ad}}$, we deduce that $G_p\s P$. 
	\end{proof}
	
	\subsection{The double $\mathrm{D}(G)$}\label{Subsection: The double}
	Adopt the notation $\mathrm{D}(G)\coloneqq G\times G$. Note that $G\times G$ acts on $\mathrm{D}(G)$ by
	$$(h_1,h_2)\cdot (g_1,g_2)\coloneqq (h_1g_1h_2^{-1},h_2g_2h_2^{-1}),\quad (h_1,h_2)\in G\times G,\text{ }(g_1,g_2)\in\mathrm{D}(G).$$ At the same time, let $\theta^L,\theta^R\in\Omega^1(G,\g)$ denote the left and right-invariant Maurer Cartan forms on $G$, respectively. Let us also write $\pi_1,\pi_2:\mathrm{D}(G)\longrightarrow G$ for the projections to the first and second factors, respectively, and $\langle\cdot,\cdot\rangle:\g\otimes\g\longrightarrow\g$ for the Killing form. Consider the $2$-form $\omega\in\Omega^2(G)$ defined by 
	$$\omega_{\mathrm{D}(G)}\coloneqq\frac{1}{2}\langle\mathrm{Ad}_{\pi_2}(\pi_1^*\theta^L),\pi_1^*\theta^L\rangle+\frac{1}{2}\langle\pi_1^*\theta^L,\pi_2^*\theta^L+\pi_2^*\theta^R\rangle,$$ as well as the algebraic map
	$$\mu_{\mathrm{D}(G)}=(\mu_1,\mu_2):\mathrm{D}(G)\longrightarrow G\times G,\quad (g_1,g_2)\mapsto (g_1g_2g_1^{-1},g_2^{-1}).$$ With respect to the above-mentioned $G\times G$-action, $(\mathrm{D}(G),\omega_{\mathrm{D}(G)},\mu_{\mathrm{D}(G)})$ is a quasi-Hamiltonian $G\times G$-space \cite{ale-mal-mei:jdg}. 
	
	\subsection{A quasi-Hamiltonian structure on $G\times_{U(P)}P$}\label{Subsection: Quasi-Hamiltonian structure}
	Let $P\s G$ be a parabolic subgroup. Note that $G\times L(P)$ acts on $G\times_{U(P)}P$ by
	\begin{equation}\label{Equation: Product action}(g,[p])\cdot[h:k]\coloneqq [ghp^{-1}:php^{-1}],\quad (g,[p])\in G\times L(P),\text{ }[h:k]\in G\times_{U(P)}P.\end{equation}
	We also have the map
	$$\mu_P:G\times_{U(P)}P\longrightarrow G\times L(P),\quad [g:p]\mapsto (gpg^{-1},[p^{-1}]).$$
	Write $j:G\times P\longrightarrow\mathrm{D}(G)$ and $\pi:G\times P\longrightarrow G\times_{U(P)}P$ for the inclusion and quotient maps, respectively.
	
	\begin{proposition}
		The following statements are true:
		\begin{itemize}
			\item[\textup{(i)}] $\pi^*\omega_P=j^*\omega_{\mathrm{D(G)}}$ for a unique algebraic $2$-form $\omega_P$ on $G\times_{U(P)}P$;
			\item[\textup{(ii)}] the $G\times L(P)$-action \eqref{Equation: Product action}, $2$-form $\omega_P$, and map $\mu_P$ render $G\times_{U(P)}P$ a quasi-Hamiltonian $G\times L(P)$-variety.
		\end{itemize}
	\end{proposition}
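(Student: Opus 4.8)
The plan is to transport the quasi-Hamiltonian $G\times G$-structure on $\mathrm{D}(G)$ to the quotient $G\times_{U(P)}P=(G\times P)/U(P)$ by pulling back along $\pi$ and descending along the free $U(P)$-action. Two facts are used repeatedly. First, $j$ is equivariant for the inclusion $G\times P\hookrightarrow G\times G$, so the generating vector field of $\xi\in\g\oplus\p$ for the $G\times P$-action is $j$-related to that of $\xi\in\g\oplus\g$ for the $G\times G$-action; consequently $\iota_{\xi_{G\times P}}j^*\omega_{\mathrm{D}(G)}=j^*(\iota_{\xi}\omega_{\mathrm{D}(G)})$. Second, $\p=\uu^{\perp}$ with respect to the Killing form, and the Killing form descends to the non-degenerate invariant form on $\mathfrak{l}(\p)=\p/\uu$ that is used to make $\mu_P$ a moment map; in particular $\langle A,B\rangle=\langle[A],[B]\rangle_{\mathfrak l}$ for $A,B\in\p$.

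For part (i), I would show that $j^*\omega_{\mathrm{D}(G)}$ is $U(P)$-basic, so that it descends to a unique algebraic $2$-form $\omega_P$ with $\pi^*\omega_P=j^*\omega_{\mathrm{D}(G)}$. Invariance is inherited from the $G\times G$-invariance of $\omega_{\mathrm{D}(G)}$. For horizontality, let $\zeta\in\uu$; condition (iii) in the definition of a quasi-Hamiltonian space, applied to $\mathrm{D}(G)$, gives $\iota_{\zeta_{G\times P}}j^*\omega_{\mathrm{D}(G)}=\tfrac12(\mu_2\circ j)^*(\theta^L+\theta^R,\zeta)$. Since $\mu_2\circ j$ is $(g,p)\mapsto p^{-1}$ and $\mathrm{inv}^*(\theta^L+\theta^R)=-(\theta^L+\theta^R)$, evaluating on $v=(\mathrm{d}L_p)X$ with $X\in\p$ yields $-\langle X+\Ad_p X,\zeta\rangle$; both terms vanish because $\langle\p,\uu\rangle=0$ and $\Ad_{p^{-1}}\zeta\in\uu$. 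Hence $j^*\omega_{\mathrm{D}(G)}$ is horizontal, and part (i) follows.

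For part (ii), the $G\times L(P)$-invariance of $\omega_P$ descends from that of $\omega_{\mathrm{D}(G)}$, while the equivariance of $\mu_P$ is a direct check. I would verify conditions (i) and (iii) of the definition by pulling back along $\pi$. Using $\mathrm{d}\omega_{\mathrm{D}(G)}=-\mu_{\mathrm{D}(G)}^*\eta_{G\times G}$ together with $\mu_1\circ j\colon(g,p)\mapsto gpg^{-1}$ and $\mu_2\circ j\colon(g,p)\mapsto p^{-1}$, condition (i) reduces to the identity $\eta_G\vert_P=\mathrm{proj}^*\eta_{L(P)}$, where $\mathrm{proj}\colon P\to L(P)$; this in turn follows because $\langle X,[Y,Z]\rangle$ depends only on the Levi components of $X,Y,Z\in\p$, as $\uu$ is an isotropic ideal with $\langle\uu,\p\rangle=0$. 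Condition (iii) similarly reduces to $(\theta^L+\theta^R,\xi)\vert_{TP}=\mathrm{proj}^*(\theta^L+\theta^R,[\xi])$ for $\xi\in\p$, which is immediate from $\langle A,\xi\rangle=\langle[A],[\xi]\rangle_{\mathfrak l}$.

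The minimal-degeneracy condition (ii) of the definition is the main obstacle. The inclusion $\{\xi_M:\Ad_{\mu_P[h:k]}\xi=-\xi\}\subseteq\ker(\omega_P)_{[h:k]}$ is free: by the already-established condition (iii), $\iota_{\xi_M}\omega_P=\tfrac12\mu_P^*(\theta^L+\theta^R,\xi)$, and the covector $(\theta^L+\theta^R,\xi)$ vanishes at $\mu_P[h:k]$ exactly when $\Ad_{\mu_P[h:k]}\xi=-\xi$. The reverse inclusion is the crux. Since $\pi^*\omega_P=j^*\omega_{\mathrm{D}(G)}$ and the $U(P)$-orbit directions lie in $\ker(j^*\omega_{\mathrm{D}(G)})$, one has $\ker(\omega_P)_{[h:k]}=\mathrm{d}\pi\big(\ker(j^*\omega_{\mathrm{D}(G)})_{(h,k)}\big)$, where $\ker(j^*\omega_{\mathrm{D}(G)})_{(h,k)}=T_{(h,k)}(G\times P)\cap T_{(h,k)}(G\times P)^{\perp}$ is computed inside $T\mathrm{D}(G)$ from the explicit formula for $\omega_{\mathrm{D}(G)}$, the quasi-Hamiltonian structure of the double (whose $2$-form has kernel $\{\xi_{\mathrm{D}(G)}:\Ad_{\mu_{\mathrm{D}(G)}}\xi=-\xi\}$ by its own condition (ii)), and $\p^{\perp}=\uu$. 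I would then close the argument by a dimension count, matching $\dim\ker(\omega_P)_{[h:k]}=\dim(G\times_{U(P)}P)-\operatorname{rank}(\omega_P)_{[h:k]}$ against the dimension of the right-hand side, using $\dim(G\times_{U(P)}P)=\dim\g+\dim\mathfrak{l}(\p)$ and the centralizer relation $L(P)_{[k]}=P_{[k]}/U(P)$ from Subsection \ref{Subsection: Levi subgroups}. Alternatively, condition (ii) can be obtained by exhibiting $G\times_{U(P)}P$ as a quasi-Hamiltonian reduction of $\mathrm{D}(G)$ along $G\times P=\mu_2^{-1}(P)$ by $U(P)$, so that minimal degeneracy is supplied by the reduction theorem; the dimension count is the more self-contained route.
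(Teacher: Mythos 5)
Your proposal is correct in substance but takes a genuinely different route from the paper, whose entire proof is a one-line citation: the proposition is deduced from Theorem 2.21(4), Lemma 4.3, and Example 4.5 of \cite{balibanu-mayrand} --- i.e.\ precisely the strategy you mention only as a fallback, realizing $G\times_{U(P)}P$ as a quasi-Hamiltonian reduction of $\mathrm{D}(G)$ along $P\s G$ (note $\mu_2^{-1}(P)=G\times P$ since $P^{-1}=P$) with stabilizer $U(P)$, so that descent of the $2$-form, the moment map axioms, and minimal degeneracy are all supplied by the general reduction theorem. Your primary, self-contained route checks out where you execute it: the horizontality computation $-\langle X+\Ad_pX,\zeta\rangle=0$ for $\zeta\in\mathfrak{u}(\p)$ (using $\langle\p,\mathfrak{u}(\p)\rangle=0$ and $\Ad_{p^{-1}}\mathfrak{u}(\p)=\mathfrak{u}(\p)$) correctly shows $j^*\omega_{\mathrm{D}(G)}$ is basic for the principal $U(P)$-action, and your reductions of axioms (i) and (iii) to $\eta_G\vert_{P}=\mathrm{proj}^*\eta_{L(P)}$ and $(\theta^L+\theta^R,\xi)\vert_{TP}=\mathrm{proj}^*(\theta^L+\theta^R,[\xi])$ are exactly right, resting on $\mathfrak{u}(\p)$ being an ideal on which the Killing pairing with $\p$ vanishes, so that the form descends nondegenerately to $\mathfrak{l}(\p)$. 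What each approach buys: yours makes visible why the induced Levi form is the correct one; the paper's citation handles this and other slices uniformly with no computation.

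The one soft spot is axiom (ii). Your identity $\ker(\omega_P)_{[h:k]}=\mathrm{d}\pi\big(\ker(j^*\omega_{\mathrm{D}(G)})_{(h,k)}\big)$ is fine, but computing $T(G\times P)\cap T(G\times P)^{\perp_{\omega}}$ inside $T\mathrm{D}(G)$ --- equivalently, producing the upper bound $\dim\ker(\omega_P)\leq\dim\{\xi_M\}$ --- is the actual content of Lemma 4.3 of \cite{balibanu-mayrand}, and as written your dimension count names the quantities to be matched without establishing either side. Since your fallback is literally the paper's proof, the argument closes either way; but if you want the self-contained version, that kernel computation with the explicit formula for $\omega_{\mathrm{D}(G)}$ must be carried out rather than sketched.
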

	
	\begin{proof}
		These results follow immediately from Theorem 2.21(4), Lemma 4.3, and Example 4.5 of \cite{balibanu-mayrand}.
	\end{proof}
	
	The reader should note that the quasi-Hamiltonian structure of $G\times_{U(P)}P$ is studied extensively in \cite{boa:11}.
	
	\section{TQFTs from multiplicative Grothendieck--Springer resolutions}\label{Section: Multiplicative TQFTs}
	
	This section is a multiplicative counterpart to Sections \ref{Section: Partial Grothendieck--Springer resolutions} and \ref{Section: TQFTs from}. We begin by introducing the multiplicative partial Grothendieck--Springer resolutions $\nu_{\mathcal{C}}:G_{\mathcal{C}}\longrightarrow G$ in Subsection \ref{Subsection: The multiplicative}. In Subsection \ref{Subsection: Quasi}, we introduce and examine a quasi-symplectic groupoid $\mathrm{D}(G)_{\mathcal{C}}\tto G_{\mathcal{C}}$. Steinberg slices are recalled in Subsection \ref{Subsection: Steinberg}. These slices are used to construct admissible global slices to $(\mathrm{D}(G)_{\mathcal{C}})_{\text{reg}}\tto(G_{\mathcal{C}})_{\text{reg}}$ in Subsection \ref{Subsection: Quasi slices}. In Subsection \ref{Subsection: Multiplicative alternations}, we describe the TQFTs that result. 
	
	\subsection{The multiplicative partial Grothendieck--Springer resolution $\nu_{\mathcal{C}}:G_{\mathcal{C}}\longrightarrow G$}\label{Subsection: The multiplicative} 
	Let $\mathcal{C}$ be a conjugacy class of parabolic subgroups of $G$. Consider subvariety
	$$G_{\mathcal{C}}\coloneqq\{(P,g)\in\mathcal{C}\times G:g\in P\}$$ of $\mathcal{C}\times G$. Note that $G$ acts on $G_{\mathcal{C}}$ by
	\begin{equation}\label{Equation: Action quasi} g\cdot(P,h)\coloneqq (gPg^{-1},ghg^{-1}),\quad g\in G,\text{ }(P,h)\in G_{\mathcal{C}}.\end{equation} The morphism $$\nu_{\mathcal{C}}:G_{\mathcal{C}}\longrightarrow G,\quad (P,g)\mapsto g$$ is equivariant with respect to the conjugation of $G$ on itself. 
	
	\begin{definition}
		The morphism $\nu_{\mathcal{C}}:G_{\mathcal{C}}\longrightarrow G$ is called the \textit{multiplicative partial Grothendieck--Springer resolution} determined by $\mathcal{C}$.
	\end{definition}
	
	The reader should note that \cite{boa:thesis} outlines a role for partial Grothendieck--Springer resolutions in quasi-Poisson geometry. For $P\in\mathcal{C}$, the map $$\gamma_{P}:G\times_P P\longrightarrow G_{\mathcal{C}}\quad [g:p]\mapsto (gPg^{-1},gpg^{-1})$$ is a $G$-equivariant variety isomorphism.
	
	\subsection{The quasi-symplectic groupoid $\mathrm{D}(G)_{\mathcal{C}}$}\label{Subsection: Quasi}
	Consider a conjugacy class $\mathcal{C}$ of parabolic subgroups of $G$, as well as an element $P\in\mathcal{C}$. As discussed in Subsection \ref{Subsection: Quasi-Hamiltonian structure}, $G\times_{U(P)}P$ is a quasi-Hamiltonian $G$-variety. Proposition \ref{Proposition: Integrating quasi-Poisson manifolds} then yields a quasi-symplectic groupoid $$\mathrm{D}(G)_{P}\coloneqq\mathrm{Pair}(G\times_{U(P)}P)\sll{e}L(P)\tto G\times_P P.$$
	Note that the source (resp. target) morphism is projection from the first (resp. second) factor of $G\times_{U(P)}P$. Write $\sss_{P}:\mathrm{D}(G)_{P}\longrightarrow G_{\mathcal{C}}$ (resp. $\ttt_{P}:\mathrm{D}(G)_{P}\longrightarrow G_{\mathcal{C}}$) for the result of composing the source (resp. target) of $\mathrm{D}(G)_{P}\tto G\times_PP$ with the isomorphism $\gamma_{P}:G\times_P P\longrightarrow G_{\mathcal{C}}$. It follows that $\mathrm{D}(G)_{P}$ is a quasi-symplectic groupoid over $G_{\mathcal{C}}$ with source $\sss_{P}$ and target $\ttt_{P}$; groupoid multiplication and inversion are induced from those of the pair groupoid of $G\times_{U(P)} P$, and composing $\gamma_{P}^{-1}:G_{\mathcal{C}}\longrightarrow G\times_P P$ with the unit bisection of $\mathrm{D}(G)_{P}\tto G\times_{P}P$ gives the unit bisection of $\mathrm{D}(G)_{P}\tto G_{\mathcal{C}}$. Straightforward computations also reveal that
	$$\sss_{P}([[g_1:p_1]:[g_2:p_2]])=(g_1Pg_1^{-1},g_1p_1g_1^{-1})\quad\text{and}\quad\ttt_{P}([[g_1:p_1]:[g_2:p_2]])=(g_2Pg_2^{-1},g_2p_2g_2^{-1})$$ for all $[[g_1:p_1]:[g_2:p_2]]\in\mathrm{D}(G)_{P}$.
	
	\begin{proposition}\label{Proposition: Third canonical}
		If $P_1,P_2\in\mathcal{C}$, then there is a canonical $G$-equivariant isomorphism
		\begin{equation}\label{Equation: Diag2}\begin{tikzcd}[column sep=30pt, row sep=40pt]
				\mathrm{D}(G)_{P_1} \ar[rr, "\Psi_{(P_1,P_2)}"] \ar[dr, shift left = 0.8ex, "\ttt_{P_1}"]\ar[dr, shift right = 0.8ex, swap, "\sss_{P_1}"] & & \mathrm{D}(G)_{P_2} \ar[dl, shift left = 0.8ex, "\ttt_{P_2}"]\ar[dl, shift right = 0.8ex, swap, "\sss_{P_2}"] \\
				& G_{\mathcal{C}} &
		\end{tikzcd}\end{equation}
		of quasi-symplectic groupoids.
	\end{proposition}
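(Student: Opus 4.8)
The plan is to mimic the proof of Proposition \ref{Proposition: Second canonical}, replacing $T^*G$ and $G\times_{U(P)}\p$ with $\mathrm{D}(G)$ and $G\times_{U(P)}P$, and symplectic reductions with quasi-Hamiltonian reductions. First I would fix $h\in G$ with $hP_1h^{-1}=P_2$; such $h$ exists because $P_1$ and $P_2$ are conjugate. Conjugation by $h$ then carries $U(P_1)$ to $U(P_2)$ and descends to an algebraic group isomorphism $\phi:L(P_1)\overset{\cong}\longrightarrow L(P_2)$. I would define $\eta:G\times_{U(P_1)}P_1\longrightarrow G\times_{U(P_2)}P_2$ by $[g:p]\mapsto[gh^{-1}:hph^{-1}]$. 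Using $hU(P_1)h^{-1}=U(P_2)$, a direct check shows $\eta$ is well-defined, and short computations show it is $G$-equivariant and intertwines the $L(P_1)$- and $L(P_2)$-actions of \eqref{Equation: Product action} via $\phi$. The identity $\mu_{P_2}(\eta([g:p]))=(gpg^{-1},\phi([p^{-1}]))$ shows that $\eta$ respects the moment maps $\mu_{P_1}$ and $\mu_{P_2}$, again after identifying $L(P_1)$ and $L(P_2)$ via $\phi$.

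The one genuinely new point---free of charge in the additive case, where $\eta$ arose as a cotangent lift---is that $\eta$ preserves the quasi-Hamiltonian $2$-form. I would handle this by lifting $\eta$ to the map $\widetilde{\eta}:G\times P_1\longrightarrow G\times P_2$, $(g,p)\mapsto(gh^{-1},hph^{-1})$, and observing that under the inclusions $j_i:G\times P_i\longrightarrow\mathrm{D}(G)$ of Subsection \ref{Subsection: Quasi-Hamiltonian structure} one has $j_2\circ\widetilde{\eta}=F\circ j_1$, where $F:\mathrm{D}(G)\longrightarrow\mathrm{D}(G)$ is the action of $(e,h)\in G\times G$, namely $F(g_1,g_2)=(g_1h^{-1},hg_2h^{-1})$. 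Since $\omega_{\mathrm{D}(G)}$ is $G\times G$-invariant, $F^*\omega_{\mathrm{D}(G)}=\omega_{\mathrm{D}(G)}$. Combining this with the defining property $\pi_i^*\omega_{P_i}=j_i^*\omega_{\mathrm{D}(G)}$ yields $\eta^*\omega_{P_2}=\omega_{P_1}$. Thus $\eta$ is an isomorphism of the quasi-Hamiltonian $G\times L(P_1)$-variety onto the quasi-Hamiltonian $G\times L(P_2)$-variety, with the two Levi factors identified via $\phi$.

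With $\eta$ in hand, I would set $\Psi_{(P_1,P_2)}([\alpha_1:\alpha_2])\coloneqq[\eta(\alpha_1):\eta(\alpha_2)]$. Because quasi-Hamiltonian reduction is functorial (Proposition \ref{Proposition: Integrating quasi-Poisson manifolds}), $\eta$ induces an isomorphism of the pair groupoids $\mathrm{Pair}(G\times_{U(P_1)}P_1)$ and $\mathrm{Pair}(G\times_{U(P_2)}P_2)$ that descends through the reductions by $L(P_1)$ and $L(P_2)$ to a well-defined, $G$-equivariant isomorphism $\Psi_{(P_1,P_2)}:\mathrm{D}(G)_{P_1}\longrightarrow\mathrm{D}(G)_{P_2}$ of quasi-symplectic groupoids. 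The explicit source and target formulas from Subsection \ref{Subsection: Quasi} show that $\Psi_{(P_1,P_2)}$ covers the identity of $G_{\mathcal{C}}$, so that \eqref{Equation: Diag2} commutes; since the base $3$-form is an intrinsic structure on the common base $G_{\mathcal{C}}$, its compatibility is then automatic.

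Finally I would verify independence of the choice of $h$. If $h,k\in G$ both conjugate $P_1$ to $P_2$, then $hk^{-1}\in N_G(P_2)=P_2$, and a direct computation gives $\eta_h=(e,[hk^{-1}])\cdot\eta_k$; that is, the two lifts differ by the single $L(P_2)$-action by $[hk^{-1}]$. Passing through $\Psi$, this applies the diagonal $L(P_2)$-action to both entries $\alpha_1,\alpha_2$, which is precisely what is quotiented out in $\mathrm{Pair}(\,\cdot\,)\sll{e}L(P_2)$. Hence $\Psi_{(P_1,P_2)}$ is independent of $h$. The step demanding the most care is the $2$-form computation of the second paragraph; once $\eta$ is recognized as the restriction of the $(e,h)$-action on $\mathrm{D}(G)$, everything else follows the additive template of Proposition \ref{Proposition: Second canonical} closely.
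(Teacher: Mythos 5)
Your proof is correct and takes the same route as the paper: the paper's own proof of this proposition is literally ``analogous to that of Proposition \ref{Proposition: Second canonical}'', and your argument is a faithful execution of that analogy, defining $\eta([g:p])=[gh^{-1}:hph^{-1}]$, descending to $\Psi_{(P_1,P_2)}$ through the reductions, and checking independence of $h$ via $hk^{-1}\in N_G(P_2)=P_2$ exactly as in the additive case. In particular, you correctly identify and resolve the one step that is not automatic in the multiplicative setting---that $\eta$ preserves the quasi-Hamiltonian $2$-form, which in the additive case came for free from the cotangent-lift description---by realizing the lift $\widetilde{\eta}$ as the restriction of the $(e,h)$-action on $\mathrm{D}(G)$, invoking the $G\times G$-invariance of $\omega_{\mathrm{D}(G)}$, and pushing through the defining identity $\pi_i^*\omega_{P_i}=j_i^*\omega_{\mathrm{D}(G)}$.
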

	
	\begin{proof}
		The proof is analogous to that of Proposition \ref{Proposition: Second canonical}.
	\end{proof}
	
	We now consider the set $$\mathrm{D}(G)_{\mathcal{C}}\coloneqq\bigg(\bigsqcup_{P\in\mathcal{C}}\mathrm{D}(G)_{P}\bigg)\Bigm/\sim,$$ where $\sim$ is the equivalence relation defined by $$(\alpha_1\in\mathrm{D}(G)_{P_1})\sim(\alpha_2\in\mathrm{D}(G)_{P_2})\Longleftrightarrow\alpha_2=\Psi_{(P_1,P_2)}(\alpha_1).$$
	
	\begin{corollary}\label{Corollary: Canonical2}
		The set $\mathrm{D}(G)_{\mathcal{C}}$ has a unique $G$-equivariant quasi-symplectic groupoid structure $\mathrm{D}(G)_{\mathcal{C}}\xbigtoto[\ttt_{\mathcal{C}}]{\sss_{\mathcal{C}}}G_{\mathcal{C}}$ such that $$\begin{tikzcd}[column sep=30pt, row sep=40pt]
			\mathrm{D}(G)_{P} \ar[rr] \ar[dr, shift left = 0.8ex, "\ttt_{P}"]\ar[dr, shift right = 0.8ex, swap, "\sss_{P}"] & & \mathrm{D}(G)_{\mathcal{C}} \ar[dl, shift left = 0.8ex, "\ttt_{\mathcal{C}}"]\ar[dl, shift right = 0.8ex, swap, "\sss_{\mathcal{C}}"] \\
			& G_{\mathcal{C}} &
		\end{tikzcd}$$
		is a $G$-equivariant quasi-symplectic groupoid isomorphism for all $P\in\mathcal{C}$.
	\end{corollary}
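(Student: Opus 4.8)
The plan is to obtain this exactly as Corollary \ref{Corollary: Canonical} was obtained from Proposition \ref{Proposition: Second canonical}: the family of isomorphisms $\Psi_{(P_1,P_2)} : \mathrm{D}(G)_{P_1} \too \mathrm{D}(G)_{P_2}$ supplied by Proposition \ref{Proposition: Third canonical} assembles into a compatible cocycle, so that the quotient defining $\mathrm{D}(G)_{\mathcal{C}}$ carries a canonical quasi-symplectic groupoid structure descended from the constituent pieces $\mathrm{D}(G)_P$. First I would record that $\sim$ is a genuine equivalence relation. Reflexivity is $\Psi_{(P,P)} = \mathrm{id}$, which holds because one may take $h = e$ in the construction of $\Psi_{(P,P)}$; symmetry amounts to $\Psi_{(P_2,P_1)} = \Psi_{(P_1,P_2)}^{-1}$, which is immediate from the explicit formula; and transitivity is the cocycle identity $\Psi_{(P_2,P_3)} \circ \Psi_{(P_1,P_2)} = \Psi_{(P_1,P_3)}$. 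This last identity follows from the \emph{canonicity} asserted in Proposition \ref{Proposition: Third canonical}: if $h$ conjugates $P_1$ to $P_2$ and $h'$ conjugates $P_2$ to $P_3$, then $h'h$ conjugates $P_1$ to $P_3$, and the composite built from $h$ and $h'$ agrees with the map built from $h'h$; since $\Psi_{(P_1,P_3)}$ is independent of the conjugating element, the two coincide.

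With $\sim$ confirmed to be an equivalence relation, the descent is formal. Each $\Psi_{(P_1,P_2)}$ is a $G$-equivariant isomorphism of quasi-symplectic groupoids intertwining $(\sss_{P_1},\ttt_{P_1})$ with $(\sss_{P_2},\ttt_{P_2})$, as in diagram \eqref{Equation: Diag2}. Consequently the source and target maps $\sss_P,\ttt_P : \mathrm{D}(G)_P \too G_{\mathcal{C}}$, the groupoid multiplication and inversion, and the $2$-form and $3$-form data are all preserved by the identifications defining $\sim$. They therefore descend unambiguously to $\mathrm{D}(G)_{\mathcal{C}}$, yielding maps $\sss_{\mathcal{C}},\ttt_{\mathcal{C}} : \mathrm{D}(G)_{\mathcal{C}} \too G_{\mathcal{C}}$ and a quasi-symplectic groupoid structure for which each natural map $\mathrm{D}(G)_P \too \mathrm{D}(G)_{\mathcal{C}}$ is, by construction, a $G$-equivariant quasi-symplectic groupoid isomorphism compatible with the displayed triangle. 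The defining conditions of a quasi-symplectic groupoid (multiplicativity of the $2$-form, the relation $\mathrm{d}\omega = \sss_{\mathcal{C}}^*\phi - \ttt_{\mathcal{C}}^*\phi$, the dimension count, and the non-degeneracy along the unit bisection) hold on $\mathrm{D}(G)_{\mathcal{C}}$ because they hold on each $\mathrm{D}(G)_P$ and are transported isomorphically.

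Uniqueness is the remaining point, and it too is immediate: requiring the canonical map $\mathrm{D}(G)_P \too \mathrm{D}(G)_{\mathcal{C}}$ to be an isomorphism for some (equivalently every) $P \in \mathcal{C}$ forces the groupoid operations, source, target, and the differential forms on $\mathrm{D}(G)_{\mathcal{C}}$ to be the pushforwards of those on $\mathrm{D}(G)_P$, leaving no freedom. I do not expect a genuine obstacle here; the only step demanding care is the cocycle verification, and even that reduces to the independence of $\Psi_{(P_1,P_2)}$ from the choice of conjugating element, which is built into Proposition \ref{Proposition: Third canonical}. The argument is strictly parallel to the additive case, so the proof can legitimately be condensed to a citation of Proposition \ref{Proposition: Third canonical} together with the definition of $\mathrm{D}(G)_{\mathcal{C}}$.
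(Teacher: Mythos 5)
Your proposal is correct and matches the paper exactly: the paper's entire proof of Corollary \ref{Corollary: Canonical2} is the one-line citation of Proposition \ref{Proposition: Third canonical} together with the definition of $\mathrm{D}(G)_{\mathcal{C}}$, precisely the condensation you arrive at in your final sentence. Your expanded verification---that the cocycle identity for the $\Psi_{(P_1,P_2)}$ follows from their independence of the conjugating element, and that the structure then descends formally with uniqueness forced by the isomorphism requirement---is exactly the content the paper leaves implicit.
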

	
	\begin{proof}
		This follows immediately from Proposition \ref{Proposition: Third canonical} and the definition of $\mathrm{D}(G)_{\mathcal{C}}$.
	\end{proof}
	
	Given $(P,p)\in G_{\mathcal{C}}$, the isomorphism $\mathrm{D}(G)_{\mathcal{C}}\cong\mathrm{D}(G)_{P}$ restricts to an isomorphism
	\begin{align}(\mathrm{D}(G)_{\mathcal{C}})_{(P,p)} & \cong (\mathrm{D}(G)_{P})_{[e:p]} \\ & = \left\{[[g:y]:[h:z]]\in \left(G\times_{U(P)}P)\times_{L(P)}(G\times_{U(P)}P)\right)/L(P):[g:y]=[e:x]=[h:z]\text{ in }G\times_P P\right\}\notag\end{align} between the isotropy groups of $(P,p)$ and $[e:p]\in G\times_P P$.  We use this isomorphism to freely identify the two isotropy groups in our next proposition.
	
	\begin{proposition}\label{Proposition: Isotropy group 2}
		If $(P,p)\in G_{\mathcal{C}}$, then 
		$$L(P)_{[p]}\longrightarrow (\mathrm{D}(G)_{\mathcal{C}})_{(P,p)}=\left(\left(G\times_{U(P)} P)\times_{L(P)}(G\times_{U(P)} P)\right)/L(P)\right)_{[e:p]},\quad [q]\mapsto [[q:q^{-1}pq]:[e:p]]$$
		is a well-defined isomorphism of algebraic groups.
	\end{proposition}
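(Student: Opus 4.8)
The plan is to mirror the proof of Proposition \ref{Proposition: Isotropy group}, replacing the adjoint action and the symplectic reduction $\sll{0}$ by conjugation and the quasi-Hamiltonian reduction $\sll{e}$ of Proposition \ref{Proposition: Integrating quasi-Poisson manifolds}. As in the additive case, I would not verify well-definedness of the map on $L(P)_{[p]}$ directly. Instead, I would first consider the map
$$P_{[p]}\longrightarrow (\mathrm{D}(G)_{\mathcal{C}})_{(P,p)},\quad q\longmapsto [[q:q^{-1}pq]:[e:p]],$$
show it is an algebraic group morphism that is surjective with kernel $U(P)$, and then invoke $L(P)_{[p]}=P_{[p]}/U(P)$ to conclude that it descends to the asserted isomorphism $L(P)_{[p]}\overset{\cong}\longrightarrow (\mathrm{D}(G)_{\mathcal{C}})_{(P,p)}$. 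That the displayed assignment is a group morphism is a routine computation with the groupoid multiplication on $\mathrm{D}(G)_P=\mathrm{Pair}(G\times_{U(P)}P)\sll{e}L(P)$, using that composition in a pair groupoid is $(a,b)\cdot(b,c)=(a,c)$ and that the diagonal $L(P)$-action may be used to align middle entries.

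For surjectivity, I would take a general element of the isotropy group and use the source/target formulas $\sss_P([[g_1:p_1]:[g_2:p_2]])=(g_1Pg_1^{-1},g_1p_1g_1^{-1})$ and $\ttt_P([[g_1:p_1]:[g_2:p_2]])=(g_2Pg_2^{-1},g_2p_2g_2^{-1})$ recorded in Subsection \ref{Subsection: Quasi}. Requiring both to equal $(P,p)$ forces $g_1,g_2\in N_G(P)=P$ together with $p_1=g_1^{-1}pg_1$ and $p_2=g_2^{-1}pg_2$. The moment-map constraint defining the fiber product in $\mathrm{Pair}(G\times_{U(P)}P)\sll{e}L(P)$ reads $[p_1]=[p_2]$ in $L(P)$, which translates into $[g_1g_2^{-1}]\in L(P)_{[p]}$, i.e. $g_1g_2^{-1}\in P_{[p]}$. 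Acting by $[g_2]\in L(P)$ on both factors then normalizes the second entry to $[e:p]$ and exhibits the element as $[[g_1g_2^{-1}:(g_1g_2^{-1})^{-1}p(g_1g_2^{-1})]:[e:p]]$, which lies in the image.

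For the kernel, I would observe that $q\in P_{[p]}$ lies in the kernel precisely when $[[q:q^{-1}pq]:[e:p]]=[[e:p]:[e:p]]$, i.e. when both factors lie in the diagonal $L(P)$-orbit of $([e:p],[e:p])$, whose points are $([q':q'^{-1}pq'],[q':q'^{-1}pq'])$ for $q'\in P$. Unwinding this with the $U(P)$-ambiguity in $G\times_{U(P)}P$ — exactly as in the displayed manipulations of the proof of Proposition \ref{Proposition: Isotropy group} — shows the condition is equivalent to $q\in U(P)$. Conversely $U(P)\s P_{[p]}$ maps trivially, since $U(P)$ is the kernel of $P\longrightarrow L(P)$. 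Hence the kernel is exactly $U(P)$.

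I expect the main obstacle to be purely bookkeeping: correctly interpreting the quasi-Hamiltonian reduction $\sll{e}$ so that the fiber-product constraint becomes $[p_1]=[p_2]$ (rather than some inverted or twisted variant), and tracking the two nested quotients by $U(P)$ and by $L(P)$ in the surjectivity and kernel arguments. No new Lie-theoretic input beyond the identity $L(P)_{[p]}=P_{[p]}/U(P)$ should be required; in particular, connectedness results such as Lemma \ref{Lemma: In Borel group case} are not needed here, as they enter only in the global-slice statement.
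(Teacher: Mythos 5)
Your proposal is correct and takes essentially the same route as the paper: the paper's proof of this proposition simply declares it ``entirely analogous'' to Proposition \ref{Proposition: Isotropy group}, and you carry out exactly that analogy, factoring through the morphism $P_{[p]}\longrightarrow(\mathrm{D}(G)_{\mathcal{C}})_{(P,p)}$, proving surjectivity and kernel $U(P)$, and then invoking $L(P)_{[p]}=P_{[p]}/U(P)$. Your multiplicative bookkeeping is also accurate --- the fiber-product constraint is indeed $[p_1]=[p_2]$ (from reducing the fused $L(P)$-moment map $([p_1^{-1}],[p_2^{-1}])\mapsto[p_1^{-1}][p_2^{-1}]^{-1}$ at $e$), and the surjectivity step correctly uses that parabolics are self-normalizing.
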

	
	\begin{proof}
		The proof is entirely analogous to that of Proposition \ref{Proposition: Isotropy group}.
	\end{proof}
	
	\subsection{Steinberg slices}\label{Subsection: Steinberg}
	Fix a maximal torus $T\s G$ and Borel subgroup $B\s G$ satisfying $T\s B$. Choose an enumeration $\alpha_1,\ldots,\alpha_{\ell}:T\longrightarrow\mathbb{C}^{\times}$ of the resulting simple roots. Given $i\in\{1,\ldots,\ell\}$, let $s_i\in W\coloneqq N_G(T)/T$ denote the simple reflection corresponding to $\alpha_i$. Choose a lift $\widetilde{s_i}\in N_G(T)$ of $s_i$ for each $i\in\{1,\ldots,\ell\}$. Consider the subset
	$$\mathrm{Ste}\coloneqq\prod_{i=1}^{\ell}\big(\exp({\g_{\alpha_i}})\widetilde{s_i}\big)\s G,$$ where $\exp:\g\longrightarrow G$ is the exponential map and $\g_{\alpha_i}\s\g$ denotes the root space associated to $\alpha_i$. One calls $\mathrm{Ste}$ a \textit{Steinberg slice}, largely to recognize the following results of Steinberg \cite{Steinberg}.
	
	\begin{theorem}
		The Steinberg slice $\mathrm{Ste}$ enjoys the following properties.
		\begin{itemize}
			\item[\textup{(i)}] One has $\mathrm{Ste}\s G_{\emph{reg}}$.
			\item[\textup{(ii)}] If $G$ is simply-connected, then $\mathrm{Ste}$ intersects each conjugacy class in $G_{\emph{reg}}$ transversely and in a singleton.
			\item[\textup{(iii)}] If $G$ is simply-connected, then the categorical quotient $G\longrightarrow G\sll{} G\coloneqq\mathrm{Spec}(\mathbb{C}[G]^G)$ restricts to an isomorphism $\mathrm{Ste}\overset{\cong}\longrightarrow G\sll{} G$. 
		\end{itemize}
	\end{theorem}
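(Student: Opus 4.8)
The plan is to recognize all three assertions as Steinberg's theorems on regular elements and cross-sections \cite{Steinberg}, so that the task reduces to locating the precise statements there and matching them to our conventions for $\mathrm{Ste}$.

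For \textbf{(i)}, I would first note that any $g\in\mathrm{Ste}$, being a product $\prod_{i=1}^{\ell}\exp(X_i)\widetilde{s_i}$ with $X_i\in\g_{\alpha_i}$, maps under the projection $N_G(T)\longrightarrow W$ to the Coxeter element $s_1\cdots s_{\ell}$. Steinberg establishes that every such element is regular, i.e.\ $\dim G_g=\ell$. The underlying mechanism is a Bruhat-decomposition analysis showing that the centralizer cannot exceed the minimal dimension $\ell$; I would simply cite this regularity result, which holds for arbitrary connected semisimple $G$ and does not use simple-connectedness.

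For \textbf{(ii)} and \textbf{(iii)}, assuming $G$ simply-connected, I would invoke Steinberg's cross-section theorem directly. That theorem asserts precisely that $\mathrm{Ste}$ meets each regular conjugacy class of $G$ transversally and in exactly one point, which is (ii). For (iii), the Chevalley restriction theorem for simply-connected groups gives that $\mathbb{C}[G]^G$ is a polynomial algebra on the characters $\chi_1,\ldots,\chi_{\ell}$ of the fundamental representations, so that $G\sll{}G\cong\mathbb{A}^{\ell}$. Steinberg further shows that the composite $\mathrm{Ste}\hookrightarrow G\longrightarrow G\sll{}G$ is an isomorphism; concretely, the $\chi_j$ restrict to global coordinates on $\mathrm{Ste}$, exhibiting it as a section of the adjoint quotient.

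The main point of care — and the reason simple-connectedness is hypothesized in (ii) and (iii) — is that the cross-section and quotient-section properties fail for non-simply-connected $G$, where the invariant-theoretic quotient and the regular classes behave less rigidly. Accordingly, I would resist attempting to reprove the delicate cross-section theorem and instead cite Steinberg, taking care only to verify that our normalization of $\mathrm{Ste}$ (the chosen lifts $\widetilde{s_i}$ and the root vectors in $\g_{\alpha_i}$) matches the one in \cite{Steinberg}. This bookkeeping, rather than any genuinely new argument, is the only real obstacle.
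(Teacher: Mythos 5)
Your proposal matches the paper exactly: the paper offers no proof of this theorem at all, stating it ``largely to recognize the following results of Steinberg \cite{Steinberg},'' so the intended argument is precisely the citation you give. Your attributions are accurate --- (i) is Steinberg's regularity of elements lying over a Coxeter element (no simple-connectedness needed), while (ii) and (iii) are his cross-section theorem and the identification of $\mathrm{Ste}$ as a section of the adjoint quotient $G\longrightarrow\Spec(\mathbb{C}[G]^G)\cong\mathbb{A}^{\ell}$ via the fundamental characters --- and your caution about matching the normalization of the lifts $\widetilde{s_i}$ is the right (and only) point of care.
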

	
	\subsection{Global slices to $\mathrm{D}(G)_{\mathcal{C}}$}\label{Subsection: Quasi slices}
	Let $X$ be a quasi-Poisson Hamiltonian $G$-variety. We define the \textit{regular locus} $X_{\text{reg}}$ to be the union of the top-dimensional quasi-Hamiltonian leaves of $X$.
	
	We now return to the matter at hand. Let $\mathcal{C}$ be a conjugacy class of parabolic subgroups of $G$. In light of \cite[Corollary 2.5]{balibanu-steinberg}, $$\mathrm{Ste}_{\mathcal{C}}\coloneqq\nu_{\mathcal{C}}^{-1}(\mathrm{Ste})$$ is a smooth subvariety of $G_{\mathcal{C}}$. Write $(\mathrm{D}(G)_{\mathcal{C}})_{\text{reg}}\tto(G_{\mathcal{C}})_{\text{reg}}$ for the pullback of $\mathrm{D}(G)_{\mathcal{C}}\tto G_{\mathcal{C}}$ to $(G_{\mathcal{C}})_{\text{reg}}\s G_{\mathcal{C}}$. We relate the preceding discussion to the notion of an \textit{admissible global slice}, a notion defined in Subsection \ref{Subsection: TQFTs}.  
	
	\begin{theorem}\label{Theorem: Quasi leaf intersection}
		If $G$ is simply-connected, then $\mathrm{Ste}_{\mathcal{C}}$ is an admissible global slice to $(\mathrm{D}(G)_{\mathcal{C}})_{\emph{reg}}\tto(G_{\mathcal{C}})_{\emph{reg}}$. 
	\end{theorem}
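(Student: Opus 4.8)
The plan is to follow the proof of Theorem~\ref{Theorem: Leaf intersection} essentially line by line, substituting each Poisson ingredient by its quasi-Poisson counterpart. Fix $P\in\mathcal{C}$ and use Corollary~\ref{Corollary: Canonical2} together with the isomorphism $\gamma_P:G\times_P P\longrightarrow G_{\mathcal{C}}$ to identify $\mathrm{D}(G)_{\mathcal{C}}\tto G_{\mathcal{C}}$ with $\mathrm{D}(G)_P=\mathrm{Pair}(G\times_{U(P)}P)\sll{e}L(P)\tto G\times_P P$; under this identification $\mathrm{Ste}_{\mathcal{C}}$ becomes the preimage of $\mathrm{Ste}$ under the $G$-moment map $[g:p]\mapsto gpg^{-1}$ of $G\times_P P$.

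First I would identify the orbits of the groupoid $(\mathrm{D}(G)_{\mathcal{C}})_{\mathrm{reg}}\tto(G_{\mathcal{C}})_{\mathrm{reg}}$ with the top-dimensional quasi-Hamiltonian leaves of $G_{\mathcal{C}}$, exactly as in the Poisson case. Applying Remark~\ref{Remark: Leaves} to the reduction $\mathrm{Pair}(G\times_{U(P)}P)\sll{e}L(P)$, these leaves are the quasi-Hamiltonian reductions of $G\times_{U(P)}P$ by $L(P)$ at the conjugacy classes $\mathcal{D}\s L(P)$; through $[e:p]$ the leaf is $G\times_{P_{[p]}}(pU(P))$. A dimension count giving the multiplicative analogue of Proposition~\ref{Proposition: Nice} then shows $(G_{\mathcal{C}})_{\mathrm{reg}}=\{(P,p):[p]\in L(P)_{\mathrm{reg}}\}$ and that such a leaf has dimension $\dim\g-\ell$. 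The global-slice claim thus reduces to showing that $\mathrm{Ste}$ meets the leaf through $(P,p)$ precisely when $[p]\in L(P)_{\mathrm{reg}}$, transversely and in one point.

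The nonemptiness criterion reduces, via Lemma~\ref{Lemma: Single orbit group case}, to whether $pU(P)\cap G_{\mathrm{reg}}\neq\emptyset$; combined with the Steinberg theorem, which for simply-connected $G$ asserts that $\mathrm{Ste}$ meets each regular conjugacy class of $G$ transversely and in exactly one point, this yields nonemptiness, transversality, and the singleton property simultaneously. This is exactly where the simple-connectedness hypothesis is used. Uniqueness is then argued as in Theorem~\ref{Theorem: Leaf intersection}: two points $[g:q],[h:r]$ of the leaf mapping into $\mathrm{Ste}$ lie in a single $P$-orbit by Lemma~\ref{Lemma: Single orbit group case}(i), Steinberg's theorem forces them to be $G$-conjugate to the same element of $\mathrm{Ste}$, and Lemma~\ref{Lemma: In Borel group case}---the inclusion $G_p\s P$ for $p\in P\cap G_{\mathrm{reg}}$---shows the conjugating element lies in $P$, so the two points agree in $G\times_P P$.

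For admissibility, note that every $(P,p)\in\mathrm{Ste}_{\mathcal{C}}$ satisfies $[p]\in L(P)_{\mathrm{reg}}$ by the above, so the centralizer $L(P)_{[p]}$ is abelian. Proposition~\ref{Proposition: Isotropy group 2} identifies the isotropy group $(\mathrm{D}(G)_{\mathcal{C}})_{(P,p)}$ with $L(P)_{[p]}$, whence it is abelian and $\mathrm{Ste}_{\mathcal{C}}$ is admissible. The main obstacle I anticipate is the first step: establishing the quasi-Hamiltonian analogue of Proposition~\ref{Proposition: Nice}---the leaf description and the identification of $(G_{\mathcal{C}})_{\mathrm{reg}}$---since the quasi-Poisson reduction picture and the dimension bookkeeping for conjugacy classes in $L(P)$ against those in $G$ are more delicate than in the linear case, and the Steinberg transversality must be matched carefully against the leaves of $G_{\mathcal{C}}$.
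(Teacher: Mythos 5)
Your proposal is correct and follows essentially the same route as the paper's proof: identification with $\mathrm{D}(G)_P\tto G\times_P P$ via $\gamma_P$, the orbit description $G\times_{P_{[p]}}(pU(P))$ from Remark \ref{Remark: Leaves}, regularity via Lemma \ref{Lemma: Single orbit group case}, Steinberg's fundamental-domain theorem for nonemptiness and the singleton property, uniqueness via Lemmas \ref{Lemma: Single orbit group case}(i) and \ref{Lemma: In Borel group case}, and admissibility via Proposition \ref{Proposition: Isotropy group 2} and the abelianness of $L(P)_{[p]}$ for $[p]\in L(P)_{\mathrm{reg}}$. The step you flag as the main obstacle is handled in the paper exactly by the dimension bookkeeping you describe, carried out in the (completely analogous) proof of Lemma \ref{Lemma: Single orbit group case}.
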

	
	\begin{proof}
		We first show that $\mathrm{Ste}_{\mathcal{C}}$ is a global slice to $(\mathrm{D}(G)_{\mathcal{C}})_{\text{reg}}\tto(G_{\mathcal{C}})_{\text{reg}}$. To this end, choose $P\in\mathcal{C}$. Let $(\mathrm{D}(G)_P)_{\text{reg}}\tto (G\times_P P)_{\text{reg}}$ denote the pullback of $\mathrm{D}(G)_P\tto G\times_P P$ to $(G\times_P P)_{\text{reg}}\s G\times_P P$. By Proposition \ref{Proposition: Third canonical}, it suffices to show that $\nu_P^{-1}(\mathrm{Ste})$ is a global slice to $(\mathrm{D}(G)_P)_{\text{reg}}\tto (G\times_P P)_{\text{reg}}$. A first step is to notice that the orbits of $(\mathrm{D}(G)_P)_{\text{reg}}\tto (G\times_P P)_{\text{reg}}$ are the top-dimensional orbits of $\mathrm{D}(G)_P\tto G\times_P P$. A straightforward application of Remark \ref{Remark: Leaves} reveals that the orbits of $\mathrm{D}(G)_P\tto G\times_P P$ are the subvarieties $G\times_{P_{[p]}}(pU(P))$, where $[p]\in L(P)$. Lemma \ref{Lemma: Single orbit group case}(ii) now tells us that the top-dimensional orbits are the subvarieties $G\times_{P_{[p]}}(pU(P))$ for which $pU(P)\cap G_{\text{reg}}\neq 0$.  It therefore suffices to prove the following: $\nu_P^{-1}(\mathrm{Ste})$ has a non-empty intersection with $G\times_{P_{[p]}}(pU(P))$ if and only if $pU(P)\cap G_{\text{reg}}\neq\emptyset$, in which case the intersection is a singleton. The ``if and only if" assertion follows immediately from the fact that $\mathrm{Ste}$ is a fundamental domain for the conjugation action of $G$ on $G_{\text{reg}}$.
		
		To finish proving that $\mathrm{Ste}_{\mathcal{C}}$ is a global slice, suppose that $[p]\in L(P)$ satisfies $pU(P)\cap G_{\text{reg}}\neq\emptyset$. We must prove that any two elements of $(G\times_{P_{[p]}}(pU(P)))\cap\nu_P^{-1}(\mathrm{Ste})$ coincide. In light of Lemmas \ref{Lemma: Single orbit group case}(i) and \ref{Lemma: In Borel group case}, the argument is analogous to one in the proof of Theorem \ref{Theorem: Leaf intersection}. 
		
		It remains only to establish that the global slice $\mathrm{Ste}_{\mathcal{C}}$ is admissible. Given $(P,p)\in\mathrm{Ste}_{\mathcal{C}}$, the previous two paragraphs and Lemma \ref{Lemma: Single orbit group case}(ii) imply that $[p]\in L(P)_{\text{reg}}$. It follows that $L(P)_{[p]}$ is abelian \cite{springerta}. By Proposition \ref{Proposition: Isotropy group 2}, the isotropy group $((T^*G)_{\mathcal{C}})_{(\p,x)}$ is also abelian. We conclude that $\mathrm{Ste}_{\mathcal{C}}$ is admissible. 
	\end{proof}
	
	Consider the flag variety $\mathcal{B}$ of all Borel subgroups of $G$. The following is presumably known by experts in the field.
	
	\begin{corollary}\label{Corollary: Borel 2}
		If $G$ is simply-connected, then $\mathrm{Ste}_{\mathcal{B}}$ is an admissible global slice to $\mathrm{D}(G)_{\mathcal{B}}\tto G_{\mathcal{B}}$.
	\end{corollary}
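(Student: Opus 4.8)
The plan is to mirror the proof of Corollary \ref{Corollary: Borel}, whose entire content was that for the Borel conjugacy class the regular locus exhausts the whole variety, so that the preceding leaf-intersection theorem applies to the full groupoid rather than merely its regular part. Concretely, I would first establish the multiplicative analogue of this exhaustion statement, namely $(G_{\mathcal{B}})_{\text{reg}} = G_{\mathcal{B}}$. The key point is that when $P = B$ is a Borel subgroup, its universal Levi factor $L(B)$ is a maximal torus $T$. Since $T$ is abelian, every element is regular in $T$: for each $[g] \in T$ one has $T_{[g]} = T$, whose dimension equals $\mathrm{rank}\, T$, so $[g] \in T_{\text{reg}}$. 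Hence $L(B)_{\text{reg}} = T$.

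Next I would invoke the description of the orbits of $\mathrm{D}(G)_{\mathcal{B}} \tto G_{\mathcal{B}}$ obtained in the proof of Theorem \ref{Theorem: Quasi leaf intersection}: these orbits are the subvarieties $G \times_{P_{[p]}} (pU(P))$, and the top-dimensional ones (that is, the ones comprising the regular locus) are precisely those for which $pU(P) \cap G_{\text{reg}} \neq \emptyset$. By Lemma \ref{Lemma: Single orbit group case}(iii), every coset $pU(B)$ meets $G_{\text{reg}}$ when $P = B$; equivalently, by Lemma \ref{Lemma: Single orbit group case}(ii), the condition $[p] \in L(B)_{\text{reg}} = T$ holds automatically. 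Therefore every orbit is top-dimensional, which is exactly the statement that $(G_{\mathcal{B}})_{\text{reg}} = G_{\mathcal{B}}$. Having shown this, the pullback groupoid $(\mathrm{D}(G)_{\mathcal{B}})_{\text{reg}} \tto (G_{\mathcal{B}})_{\text{reg}}$ coincides with the full groupoid $\mathrm{D}(G)_{\mathcal{B}} \tto G_{\mathcal{B}}$, and applying Theorem \ref{Theorem: Quasi leaf intersection} under the simply-connected hypothesis yields that $\mathrm{Ste}_{\mathcal{B}}$ is an admissible global slice to $\mathrm{D}(G)_{\mathcal{B}} \tto G_{\mathcal{B}}$, as desired.

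I do not anticipate a serious obstacle, as this is a direct specialization of the preceding theorem; the only points requiring care are the (multiplicative) verification that a torus has no irregular elements, and the correct matching of the orbit description from the proof of Theorem \ref{Theorem: Quasi leaf intersection} with the definition of the regular locus as the union of top-dimensional quasi-Hamiltonian leaves. If a more self-contained argument is preferred, one could instead record a multiplicative analogue of Proposition \ref{Proposition: Nice}(i), identifying $(G_{\mathcal{C}})_{\text{reg}}$ with $\{(P,g) \in G_{\mathcal{C}} : [g] \in L(P)_{\text{reg}}\}$, and then specialize to $\mathcal{C} = \mathcal{B}$; the torus computation above makes the specialization immediate.
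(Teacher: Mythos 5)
Your proposal is correct and is essentially the paper's own argument: the paper proves this corollary by noting it is analogous to Corollary \ref{Corollary: Borel}, i.e.\ by establishing the multiplicative counterpart of Corollary \ref{Corollary: Easy} --- that $(G_{\mathcal{B}})_{\text{reg}}=G_{\mathcal{B}}$, since $L(B)\cong T$ is a torus and hence $L(B)_{\text{reg}}=L(B)$ (cf.\ Lemma \ref{Lemma: Single orbit group case}(ii)--(iii)) --- and then applying Theorem \ref{Theorem: Quasi leaf intersection}. You merely spell out the details that the paper leaves implicit, and you do so correctly, including the matching of groupoid orbits with top-dimensional quasi-Hamiltonian leaves.
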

	
	\begin{proof}
		In light of Theorem \ref{Theorem: Quasi leaf intersection}, the proof is analogous to that of Corollary \ref{Corollary: Borel}.
	\end{proof}
	
	\subsection{Grothendieck--Springer alterations of the multiplicative Moore--Tachikawa TQFT}\label{Subsection: Multiplicative alternations}
	Take $G$ to be simply-connected, and let $\mathcal{C}$ be a conjugacy class of parabolic subgroups of $G$. Theorem \ref{Theorem: Quasi leaf intersection} and \cite[Corollary 2.5]{balibanu-steinberg} imply that $\G=(\mathrm{D}(G)_{\mathcal{C}})_{\text{reg}}$ and $S=\mathrm{Ste}_{\mathcal{C}}$ satisfy the hypotheses of Theorem \ref{Theorem: TQFTs from quasi-symplectic groupoids}. Write $\eta_{\mathcal{C}}:\Cob_2\longrightarrow\WS$ for the resulting TQFT, and set
	$\mathrm{D}(G)_{\mathcal{C}}^{m,n}\coloneqq \eta_{\mathcal{C}}(C_{m,n})$ for $(m,n)\neq (0,0)$.
	The following two special cases warrant further discussion. 
	
	\begin{itemize}
		\item[\textup{(i)}] If $\mathcal{C}=\{G\}$, then $\eta_{\mathcal{C}}$ is a version of the multiplicative \textit{open Moore--Tachikawa TQFT}; it is similar to the one first constructed in \cite{balibanu-mayrand}, except that here we do not need to quotient the first factor of $D(G)$ by the center. The varieties $\mathrm{D}(G)^{m,n}\coloneqq \mathrm{D}(G)_{\mathcal{C}}^{m,n}$ also feature in \cite{cro-may2:24}.
		\item[\textup{(ii)}] Suppose that $\mathcal{C}=\mathcal{B}$ is the conjugacy class of Borel subgroups. By Corollary \ref{Corollary: Borel 2}, $\mathrm{Ste}_{\mathcal{C}}=\mathrm{Ste}_{\mathcal{B}}$ is an admissible global slice to the entire symplectic groupoid $\mathrm{D}(G)_{\mathcal{C}}=\mathrm{D}(G)_{\mathcal{B}}$.
	\end{itemize}
	
	\bibliographystyle{acm} 
	\bibliography{grothendieck-springer}
	
\end{document}